\documentclass[envcountsame,envcountsect]{svmult}

\usepackage{amsfonts,amsmath,amssymb}
\usepackage{txfonts}
\usepackage{helvet}         
\usepackage{courier}        
\usepackage[bottom]{footmisc}
\usepackage{tikz}

\spnewtheorem{conj}{Conjecture}{\bfseries}{\rmfamily}

\smartqed


\begin{document}

\title*{Structured Random Matrices}

\author{Ramon van Handel}

\institute{
\textit{Contribution to IMA volume ``Discrete Structures: Analysis and 
Applications'' (2016), Springer.}\newline
Sherrerd Hall 227, Princeton University,
Princeton, NJ 08544.
\email{rvan@princeton.edu}}

\maketitle

\vskip-3cm

\abstract{Random matrix theory is a well-developed area of probability 
theory that has numerous connections with other areas of 
mathematics and its applications. Much of the literature in this area is 
concerned with matrices that possess many exact or approximate symmetries, 
such as matrices with i.i.d.\ entries, for which precise analytic results 
and limit theorems are available. Much less well understood are matrices 
that are endowed with an arbitrary structure, such as sparse Wigner 
matrices or matrices whose entries possess a given variance pattern. The 
challenge in investigating such structured random matrices is to 
understand how the given structure of the matrix is reflected in its 
spectral properties.  This chapter reviews a number of recent results, 
methods, and open problems in this direction, with a particular emphasis on
sharp spectral norm inequalities for Gaussian random matrices.}

\section{Introduction}

The study of random matrices has a long history in probability, 
statistics, and mathematical physics, and continues to be a source of 
many interesting old and new mathematical problems \cite{AGZ10,Tao12}.  
Recent years have seen impressive advances in this area, particularly in 
the understanding of universality phenomena that are exhibited by the 
spectra of classical random matrix models \cite{EY12,TV14}. At the same 
time, random matrices have proved to be of major importance in 
contemporary applied mathematics, see, for example, \cite{Tro15,Ver12} 
and the references therein.

Much of classical random matrix theory is concerned with highly 
symmetric models of random matrices. For example, the simplest random 
matrix model, the \emph{Wigner matrix}, is a symmetric matrix whose 
entries above the diagonal are independent and identically distributed. 
If the entries are chosen to be Gaussian (and the diagonal entries are 
chosen to have the appropriate variance), this model is additionally 
invariant under orthogonal transformations. Such strong symmetry 
properties make it possible to obtain extremely precise analytic results 
on the asymptotic properties of macroscopic and microscopic spectral 
statistics of these matrices, and give rise to deep connections with 
classical analysis, representation theory, combinatorics, and various 
other areas of mathematics \cite{AGZ10,Tao12}.

Much less is understood, however, once we depart from such highly 
symmetric settings and introduce nontrivial structure into the random 
matrix model. Such models are the topic of this chapter. To illustrate 
what we mean by ``structure,'' let us describe some typical examples 
that will be investigated in the sequel.
\begin{itemize}
\item 
A \emph{sparse Wigner matrix} is a matrix with a given (deterministic) 
sparsity pattern, whose nonzero entries above the diagonal are i.i.d.\ 
centered random variables. Such models have interesting applications in 
combinatorics and computer science (see, for example, \cite{AKM13}), and 
specific examples such as random band matrices are of significant 
interest in mathematical physics (cf.\ \cite{Sod10}). The ``structure'' 
of the matrix is determined by its sparsity pattern. We would like
to know how the given sparsity pattern is reflected in the spectral
properties of the matrix.
\vskip.2cm
\item Let $x_1,\ldots,x_s$ be deterministic vectors. Matrices of the form
$$
	X = \sum_{k=1}^s g_k x_kx_k^*,
$$
where $g_1,\ldots,g_s$ are i.i.d.\ standard Gaussian random variables, arise
in functional analysis (see, for example, \cite{Rud99}). The ``structure''
of the matrix is determined by the positions of the vectors $x_1,\ldots,x_s$.
We would like to know how the given positions are reflected in the spectral
properties of the matrix.
\vskip.2cm
\item Let $X_1,\ldots,X_n$ be i.i.d.\ random vectors with
covariance matrix $\Sigma$. Consider
$$
	Z = \frac{1}{n}\sum_{k=1}^n X_kX_k^*,
$$
the \emph{sample covariance matrix} \cite{Ver12,KL16}.
If we think of $X_1,\ldots,X_n$
are observed data from an underlying distribution, we
can think of $Z$ as an unbiased estimator of the covariance matrix
$\Sigma=\mathbf{E}Z$. The ``structure''
of the matrix is determined by the covariance matrix $\Sigma$.
We would like to know how the given covariance matrix is reflected 
in the spectral properties of $Z$ (and particularly in $\|Z-\Sigma\|$).
\end{itemize}
While these models possess distinct features, we will refer to such models
collectively as \emph{structured random matrices}. We emphasize two important
features of such models. First, the symmetry properties that characterize
classical random matrix models are manifestly absent in the structured
setting. Second, it is evident in the above models that it does not make
much sense to investigate their asymptotic properties (that is, probabilistic
limit theorems): as the structure is defined for the given matrix only, there
is no natural way to take the size of these matrices to infinity.

Due to these observations, the study of structured random matrices 
has a significantly different flavor than most of classical random 
matrix theory. In the absence of asymptotic theory, our main interest is 
to obtain nonasymptotic \emph{inequalities} that identify what 
structural parameters control the macroscopic properties of the 
underlying random matrix. In this sense, the study of structured random 
matrices is very much in the spirit of probability in Banach spaces 
\cite{LT91}, which is heavily reflected in the type of results that have 
been obtained in this area. In particular, the aspect of structured 
random matrices that is most well understood is the behavior of matrix 
norms, and particularly the spectral norm, of such matrices. The 
investigation of the latter will be the focus of the remainder of this 
chapter.

In view of the above discussion, it should come as no surprise that some 
of the earliest general results on structured random matrices appeared 
in the functional analysis literature \cite{TJ74,LP86,Lat05}, but 
further progress has long remained relatively limited. More recently, 
the study of structured random matrices has received renewed attention 
due to the needs of applied mathematics, cf.\ \cite{Tro15} and the 
references therein.  However, significant new progress was made in the 
past few years. On the one hand, suprisingly sharp inequalities were 
recently obtained for certain random matrix models, particularly in the 
case of independent entries, that yield nearly optimal bounds and go 
significantly beyond earlier results. On the other hand, very simple new 
proofs have been discovered for some (previously) deep classical results 
that shed new light on the underlying mechanisms and that point the way 
to further progress in this direction. The opportunity therefore seems 
ripe for an elementary presentation of the results in this area. The 
present chapter represents the author's attempt at presenting some of 
these ideas in a cohesive manner.

Due to the limited capacity of space and time, it is certainly impossible 
to provide an encylopedic presentation of the topic of this chapter, and
some choices had to be made. In particular, the following focus is adopted
throughout this chapter:
\begin{itemize}
\item The emphasis throughout is on spectral norm inequalities for
\emph{Gaussian} random matrices. The reason for this is twofold.
On the one hand, much of the difficulty of capturing the structure
of random matrices arises already in the Gaussian setting, so that
this provides a particularly clean and rich playground for investigating
such problems. On the other hand, Gaussian results extend readily to much
more general distributions, as will be discussed further in section 
\ref{sec:seginer}.
\vskip.2cm
\item For simplicity of presentation,
no attempt was made to optimize the universal constants that appear
in most of our inequalities, even though many of these inequalities can
in fact be obtained with surprisingly sharp (even optimal) constants.
The original references can be consulted for more precise statements.
\vskip.2cm
\item The presentation is by no means exhaustive, and many variations on
and extensions of the presented material have been omitted.
None of the results in this chapter are original, though I have done my
best to streamline the presentation. On the other hand, I have tried to make
the chapter as self-contained as possible, and most results are presented
with complete proofs.
\end{itemize}

The remainder of this chapter is organized as follows. The preliminary 
section \ref{sec:how} sets the stage by discussing the basic methods that 
will be used throughout this chapter to bound spectral norms of random 
matrices. Section \ref{sec:nck} is devoted to a family of powerful but 
suboptimal inequalities, the noncommutative Khintchine inequalities, that 
are applicable to the most general class of structured random matrices 
that we will encounter. In section \ref{sec:indep}, we specialize to 
structured random matrices with independent entries (such as sparse Wigner 
matrices) and derive nearly optimal bounds. We also discuss a few 
fundamental open problems in this setting. We conclude this chapter in the 
short section \ref{sec:sampcov} by investigating sample covariance 
matrices.

\section{How to bound matrix norms}
\label{sec:how}

As was discussed in the introduction, the investigation of random matrices 
with arbitrary structure has by its nature a nonasymptotic flavor: we aim 
to obtain probabilistic inequalities (upper and lower bounds) on spectral 
properties of the matrices in question that capture faithfully the 
underlying structure. At present, this program is largely developed in the 
setting of spectral norms of random matrices, which will be our focus 
thorughout this chapter. For completeness, we define:

\begin{definition}
The \emph{spectral norm} $\|X\|$ is the largest singular value of the 
matrix $X$.
\end{definition}

For convenience, we generally work with symmetric random matrices $X=X^*$. 
There is no loss of generality in doing so, as will be explained below.

Before we can obtain any meaningful bounds, we must first discuss some 
basic approaches for bounding the spectral norms of random matrices. The 
most important methods that are used for this purpose are collected in 
this section.

\subsection{The moment method}
\label{sec:mommeth}

Let $X$ be an $n\times n$ symmetric random matrix. The first difficulty 
one encounters in bounding the spectral norm $\|X\|$ is that the map 
$X\mapsto\|X\|$ is highly nonlinear. It is therefore not obvious how to 
efficiently relate the distribution of $\|X\|$ to the distribution of the 
entries $X_{ij}$. One of the most effective approaches to simplifying this 
relationship is obtained by applying the following elementary observation.

\begin{lemma}
\label{lem:mommeth}
Let $X$ be an $n\times n$ symmetric matrix. Then
$$
	\|X\| \asymp \mathrm{Tr}[X^{2p}]^{1/2p}
	\quad\mbox{for }p\asymp\log n.
$$
\end{lemma}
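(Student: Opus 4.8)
The plan is to prove the two-sided bound by relating the trace of $X^{2p}$ to the eigenvalues of $X$. Since $X$ is symmetric, write its eigenvalues as $\lambda_1,\ldots,\lambda_n\in\mathbb{R}$, so that $\|X\|=\max_i|\lambda_i|$ and $\mathrm{Tr}[X^{2p}]=\sum_{i=1}^n\lambda_i^{2p}$. The key point is that taking the $2p$-th power of the eigenvalues makes the largest one dominate the sum, up to a factor that is polynomial in $n$ but becomes a bounded constant once $p\asymp\log n$.

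The first step is the lower bound, which is immediate and requires no assumption on $p$: since every term $\lambda_i^{2p}\ge 0$ and one of them equals $\|X\|^{2p}$, we have $\mathrm{Tr}[X^{2p}]=\sum_i\lambda_i^{2p}\ge\|X\|^{2p}$, hence $\mathrm{Tr}[X^{2p}]^{1/2p}\ge\|X\|$. The second step is the matching upper bound: bounding each eigenvalue by the largest in absolute value gives $\mathrm{Tr}[X^{2p}]=\sum_i\lambda_i^{2p}\le n\,\|X\|^{2p}$, and therefore $\mathrm{Tr}[X^{2p}]^{1/2p}\le n^{1/2p}\,\|X\|$. Combining the two steps yields
$$
	\|X\|\le\mathrm{Tr}[X^{2p}]^{1/2p}\le n^{1/2p}\,\|X\|.
$$
The final step is to observe that choosing $p\asymp\log n$ makes the prefactor $n^{1/2p}=e^{(\log n)/2p}$ a universal constant of order one (for instance $p=\lceil\log n\rceil$ gives $n^{1/2p}\le e^{1/2}$), which is exactly the content of the claimed equivalence $\|X\|\asymp\mathrm{Tr}[X^{2p}]^{1/2p}$.

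There is really no serious obstacle here: the whole lemma is an exercise in the elementary inequality $\max_i a_i\le(\sum_i a_i)^{1/q}\le n^{1/q}\max_i a_i$ for nonnegative $a_i$, applied to $a_i=\lambda_i^{2p}$ with $q=2p$. The only "interpretive" point worth flagging is that the power $2p$ (rather than an odd power) is used so that $X^{2p}=(X^2)^p$ is positive semidefinite and its trace equals $\sum_i\lambda_i^{2p}$ with all terms nonnegative; this is why one works with even moments throughout. The real work of the moment method lies not in this lemma but in subsequently estimating $\mathbf{E}\,\mathrm{Tr}[X^{2p}]$ for random $X$, which is where the combinatorics of the matrix structure enters — but that is the subject of the sections that follow, not of this statement.
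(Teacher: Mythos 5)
Your proof is correct and is essentially the same as the paper's: the paper factors the argument through an auxiliary Lemma (that $\|x\|_p\asymp\|x\|_\infty$ when $p\asymp\log n$, proved via $\max_i|x_i|^p\le\sum_i|x_i|^p\le n\max_i|x_i|^p$) and then applies it to the vector of eigenvalues, which is exactly the computation you carry out inline. No substantive difference.
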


The beauty of this observation is that unlike $\|X\|$, which is a very 
complicated function of the entries of $X$, the quantity 
$\mathrm{Tr}[X^{2p}]$ is a \emph{polynomial} in the matrix entries. This 
means that $\mathbf{E}[\mathrm{Tr}[X^{2p}]]$, the $2p$-th
moment of the matrix $X$, can be evaluated explicitly and subjected to 
further analysis.  As Lemma 
\ref{lem:mommeth} implies that
$$
	\mathbf{E}[\|X\|^{2p}]^{1/2p} \asymp
	\mathbf{E}[\mathrm{Tr}[X^{2p}]]^{1/2p}
	\quad\mbox{for }p\asymp\log n,
$$
this provides a direct route to controlling the spectral norm of a 
random matrix. Various incarnations of this idea
are referred to as the \emph{moment method}.

Lemma \ref{lem:mommeth} actually has nothing to do with matrices. Given 
$x\in\mathbb{R}^n$, everyone knows that $\|x\|_p\to\|x\|_\infty$ as 
$p\to\infty$, so that $\|x\|_p\approx\|x\|_\infty$ when $p$ is large. How 
large should $p$ be for this to be the case?  The following lemma provides 
the answer.

\begin{lemma}
\label{lem:lplinf}
If $p\asymp\log n$, then $\|x\|_p\asymp\|x\|_\infty$ for all 
$x\in\mathbb{R}^n$.
\end{lemma}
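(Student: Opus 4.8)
The plan is to establish the deterministic two-sided bound
$$
	\|x\|_\infty \;\le\; \|x\|_p \;\le\; n^{1/p}\,\|x\|_\infty
	\qquad\text{for every }x\in\mathbb{R}^n\text{ and }p\ge 1,
$$
and then to observe that the dimensional factor $n^{1/p}$ becomes a universal constant precisely when $p\gtrsim\log n$. Neither inequality uses anything about the structure of $x$; this is a soft interpolation fact, so the only real content is tracking how $n^{1/p}$ degrades.

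For the lower bound, note that for each coordinate $i$ we have $|x_i|^p\le\sum_{j}|x_j|^p=\|x\|_p^p$, hence $|x_i|\le\|x\|_p$, and taking the maximum over $i$ gives $\|x\|_\infty\le\|x\|_p$. (This holds for all $p\ge 1$; in fact the same reasoning shows $p\mapsto\|x\|_p$ is nonincreasing, so the lower bound only sharpens as $p$ grows.) For the upper bound, I would simply bound each term $|x_j|^p\le\|x\|_\infty^p$ in the sum, obtaining $\|x\|_p^p=\sum_j|x_j|^p\le n\,\|x\|_\infty^p$, that is, $\|x\|_p\le n^{1/p}\|x\|_\infty$.

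Finally, writing $n^{1/p}=e^{(\log n)/p}$ and using the hypothesis $p\asymp\log n$ — of which only the lower bound $p\ge c\log n$ for a universal constant $c>0$ is needed — we get $n^{1/p}\le e^{1/c}=:C$, a universal constant. Combining this with the lower bound yields $\|x\|_\infty\le\|x\|_p\le C\|x\|_\infty$, which is exactly the claim $\|x\|_p\asymp\|x\|_\infty$. There is no genuine obstacle here: the statement is essentially the observation that $\|x\|_p\to\|x\|_\infty$, made quantitative, and the scale $p\sim\log n$ is exactly the threshold at which the error factor $n^{1/p}$ stops growing — which is the reason this choice of $p$ recurs throughout the moment method.
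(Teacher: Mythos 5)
Your proof is correct and is essentially the same argument as the paper's: both establish $\|x\|_\infty\le\|x\|_p\le n^{1/p}\|x\|_\infty$ by comparing the maximum of $|x_i|^p$ with its sum, and then note that $n^{1/p}=e^{(\log n)/p}\asymp 1$ once $p\gtrsim\log n$. The only (harmless) difference is that you spell out that only the lower bound $p\ge c\log n$ is actually used for the upper inequality.
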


\begin{proof}
It is trivial that
$$
	\max_{i\le n}|x_i|^p \le
	\sum_{i\le n}|x_i|^p \le n\max_{i\le n}|x_i|^p.
$$
Thus $\|x\|_\infty \le \|x\|_p \le n^{1/p}\|x\|_\infty$, and
$n^{1/p} = e^{(\log n)/p} \asymp 1$ when $\log n\asymp p$.
\qed
\end{proof}

The proof of Lemma \ref{lem:mommeth} follows readily by applying
Lemma \ref{lem:lplinf} to the spectrum.

\begin{proof}[Proof of Lemma \ref{lem:mommeth}]
Let $\lambda=(\lambda_1,\ldots,\lambda_n)$ be the eigenvalues of $X$.
Then $\|X\|=\|\lambda\|_\infty$ and $\mathrm{Tr}[X^{2p}]^{1/2p} =
\|\lambda\|_{2p}$. The result follows from Lemma \ref{lem:lplinf}.
\qed
\end{proof}

The moment method will be used frequently throughout this chapter as the 
first step in bounding the spectral norm of random matrices.  However, the 
moment method is just as useful in the vector setting. As a warmup 
exercise, let us use this approach to bound the maximum of i.i.d.\ 
Gaussian random variables (which can be viewed as a vector analogue of 
bounding the maximum eigenvalue of a random matrix). If $g\sim N(0,I)$ 
is the standard Gaussian vector in $\mathbb{R}^n$, Lemma \ref{lem:lplinf} 
implies
$$
	[\mathbf{E}\|g\|_\infty^p]^{1/p} \asymp
	[\mathbf{E}\|g\|_p^p]^{1/p} \asymp
	[\mathbf{E}g_1^p]^{1/p}
	\quad\mbox{for }p\asymp\log n.
$$
Thus the problem of bounding the maximum of $n$ i.i.d.\ Gaussian random 
variables is reduced by the moment method to computing the
$\log n$-th moment of a single Gaussian random variable.
We will bound the latter in section \ref{sec:nckh} in preparation 
for proving the analogous bound for random matrices.
For our present purposes, let us simply note the outcome of this 
computation $[\mathbf{E}g_1^p]^{1/p}\lesssim\sqrt{p}$
(Lemma \ref{lem:gaussmom}), so that
$$
	\mathbf{E}\|g\|_\infty 
	\le [\mathbf{E}\|g\|_\infty^{\log n}]^{1/\log n}
	\lesssim \sqrt{\log n}.
$$
This bound is in fact sharp (up to the universal constant).

\begin{remark}
\label{rem:badmoment}
Lemma \ref{lem:mommeth} implies immediately that
$$
	\mathbf{E}\|X\| \asymp \mathbf{E}[\mathrm{Tr}[X^{2p}]^{1/2p}]
	\quad\mbox{for }p\asymp\log n.
$$
Unfortunately, while this bound is sharp by construction, it is
essentially useless: 
the expectation of $\mathrm{Tr}[X^{2p}]^{1/2p}$ is in principle just as 
difficult to 
compute as that of $\|X\|$ itself.  The utility of the moment method stems 
from the fact that we can explicitly compute the expectation of 
$\mathrm{Tr}[X^{2p}]$, a polynomial in the matrix entries. This suggests 
that the moment method is well-suited in principle only for obtaining 
sharp bounds on the $p$th moment of the spectral norm
$$
	\mathbf{E}[\|X\|^{2p}]^{1/2p} \asymp
	\mathbf{E}[\mathrm{Tr}[X^{2p}]]^{1/2p}
	\quad\mbox{for }p\asymp\log n,
$$
and not on the first moment $\mathbf{E}\|X\|$ of the spectral norm.  Of 
course, as $\mathbf{E}\|X\|\le [\mathbf{E}\|X\|^{2p}]^{1/2p}$ by Jensen's 
inequality, this yields an \emph{upper} bound on the first moment of the 
spectral norm.  We will see in the sequel that this upper bound is often, 
but not always, sharp.  We can expect the moment method to yield a
sharp bound on $\mathbf{E}\|X\|$ when the fluctuations of $\|X\|$ are of a 
smaller order than its mean; this was the case, for example, in the 
computation of $\mathbf{E}\|g\|_\infty$ above.  On the other hand, the 
moment method 
is inherently dimension-dependent (as one must choose $p\sim\log n$), so 
that it is generally not well suited for obtaining dimension-free bounds.
\end{remark}

We have formulated Lemma \ref{lem:mommeth} for symmetric matrices.
A completely analogous approach can be applied to non-symmetric matrices.
In this case, we use that
$$
	\|X\|^2 = \|X^*X\| \asymp
	\mathrm{Tr}[(X^*X)^p]^{1/p}\quad
	\mbox{for }p\asymp\log n,
$$
which follows directly from Lemma \ref{lem:mommeth}. However, this 
non-symmetric form is often somewhat inconvenient in the proofs of 
random matrix bounds, or at least requires additional bookkeeping.
Instead, we recall a classical trick that allows us to directly obtain 
results for non-symmetric matrices from the analogous symmetric results.
If $X$ is any 
$n\times m$ rectangular matrix, then it is readily verified that $\|\tilde 
X\|=\|X\|$, where $\tilde X$ denotes the $(n+m)\times(n+m)$ symmetric matrix
defined by
$$
	\tilde X =
	\left[
	\begin{matrix}
	0 & X \\ X^* & 0
	\end{matrix}
	\right].
$$
Therefore, to obtain a bound on the norm $\|X\|$ of a non-symmetric random 
matrix, it suffices to apply the corresponding result for symmetric random 
matrices to the doubled matrix $\tilde X$. For this reason, it is not 
really necessary to treat non-symmetric matrices separately, and we will 
conveniently restrict our attention to symmetric matrices throughout this 
chapter without any loss of generality.

\begin{remark}
\label{rem:mxconc}
A variant on the moment method is to use the bounds
$$
	e^{t\lambda_{\rm max}(X)} \le \mathrm{Tr}[e^{tX}] \le
	n e^{t\lambda_{\rm max}(X)},
$$
which gives rise to so-called ``matrix concentration'' inequalities. 
This approach has become popular in recent years (particularly in the 
applied mathematics literature) as it provides easy proofs of a number 
of useful inequalities. Matrix concentration bounds are often stated in 
terms of tail probabilities $\mathbf{P}[\lambda_{\rm max}(X)>t]$, and 
therefore appear at first sight to provide more information than 
expected norm bounds. This is not the case, however: the resulting tail 
bounds are highly suboptimal, and much sharper inequalities can be 
obtained by combining expected norm bounds with concentration 
inequalities \cite{BLM13} or chaining tail bounds \cite{Dir15}. As in 
the case of classical concentration inequalities, the moment method 
essentially subsumes the matrix concentration approach and is often more 
powerful. We therefore do not discuss this approach further, but refer
to \cite{Tro15} for a systematic development. 
\end{remark}

\subsection{The random process method}
\label{sec:randpr}

While the moment method introduced in the previous section is very 
powerful, it has a number of drawbacks. First, while the matrix moments 
$\mathbf{E}[\mathrm{Tr}[X^{2p}]]$ can typically be computed explicitly, 
extracting useful information from the resulting expressions is a 
nontrivial matter that can result in difficult combinatorial problems. 
Moreover, as discussed in Remark \ref{rem:badmoment}, in certain cases the 
moment method \emph{cannot} yield sharp bounds on the expected spectral 
norm $\mathbf{E}\|X\|$.  Finally, the moment method can only yield 
information on the spectral norm of the matrix; if other operator norms 
are of interest, this approach is powerless.  In this section, we develop 
an entirely different method that provides a fruitful approach for 
addressing these issues.

The present method is based on the following trivial fact.

\begin{lemma}
\label{lem:rptriv}
Let $X$ be an $n\times n$ symmetric matrix. Then
$$
	\|X\| = \sup_{v\in B}|\langle v,Xv\rangle|,
$$
where $B$ denotes the Euclidean unit ball in $\mathbb{R}^n$.
\end{lemma}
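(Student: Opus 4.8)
The plan is to reduce the statement to the spectral decomposition of $X$. Since $X=X^*$, the spectral theorem furnishes an orthonormal basis $u_1,\ldots,u_n$ of $\mathbb{R}^n$ consisting of eigenvectors, $Xu_i=\lambda_i u_i$. For a symmetric matrix the singular values are precisely $|\lambda_1|,\ldots,|\lambda_n|$, so by definition $\|X\|=\max_{i\le n}|\lambda_i|$; it therefore suffices to show that $\sup_{v\in B}|\langle v,Xv\rangle|$ equals this quantity.

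First I would establish the upper bound $\sup_{v\in B}|\langle v,Xv\rangle|\le\|X\|$. Expanding an arbitrary $v\in B$ in the eigenbasis as $v=\sum_i c_i u_i$ with $\sum_i c_i^2=\|v\|^2\le 1$, orthonormality gives $\langle v,Xv\rangle=\sum_i\lambda_i c_i^2$, whence $|\langle v,Xv\rangle|\le(\max_i|\lambda_i|)\sum_i c_i^2\le\|X\|$. For the reverse inequality I would simply exhibit a maximizer: choosing an index $j$ with $|\lambda_j|=\max_i|\lambda_i|$ and taking $v=u_j\in B$ yields $|\langle v,Xv\rangle|=|\lambda_j|=\|X\|$. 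Combining the two bounds proves the lemma.

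As the author signals by calling this a ``trivial fact,'' there is no genuine obstacle here. The only points deserving a word of care are the identification of the singular values of a symmetric matrix with the moduli of its eigenvalues, and the (cosmetic) observation that extending the supremum from the unit sphere to the full ball $B$ changes nothing, since the quadratic form $v\mapsto|\langle v,Xv\rangle|$ is homogeneous of degree two and is thus maximized over $B$ on its boundary. An alternative route that sidesteps the spectral theorem is to start from the operator-norm identity $\|X\|=\sup_{v,w\in B}\langle w,Xv\rangle$ and invoke the polarization identity $4\langle w,Xv\rangle=\langle w+v,X(w+v)\rangle-\langle w-v,X(w-v)\rangle$ valid for symmetric $X$, together with the parallelogram law, to bound $\langle w,Xv\rangle$ by $\sup_{u\in B}|\langle u,Xu\rangle|$; but the eigenbasis computation above is shorter and more transparent.
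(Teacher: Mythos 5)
The paper states this as a ``trivial fact'' and gives no proof, so there is no argument in the source to compare against. Your spectral-theorem proof is correct and is the standard verification: express $v=\sum_i c_i u_i$ in an orthonormal eigenbasis, observe $\langle v,Xv\rangle=\sum_i\lambda_i c_i^2$, bound this in absolute value by $\max_i|\lambda_i|\cdot\|v\|^2$, and exhibit an eigenvector of the top-modulus eigenvalue to show the supremum is attained. The two points you flag as needing care --- that the singular values of a symmetric matrix are $|\lambda_i|$, and that extending the supremum from the sphere to the ball $B$ is harmless by degree-two homogeneity --- are exactly the right things to say. Your polarization alternative, starting from $\|X\|=\sup_{v,w\in B}\langle w,Xv\rangle$ and using $4\langle w,Xv\rangle=\langle w+v,X(w+v)\rangle-\langle w-v,X(w-v)\rangle$ with the parallelogram law, is also valid for symmetric $X$; it has the mild virtue of working in any inner-product space without invoking finite-dimensional diagonalization, but for this finite-dimensional lemma the eigenbasis computation is indeed cleaner.
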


When $X$ is a symmetric random matrix, we can view $v\mapsto\langle 
v,Xv\rangle$ as a \emph{random process} that is indexed by the Euclidean 
unit ball.  Thus controlling the expected spectral norm of $X$ is 
none other than a special instance of the general probabilistic problem of 
controlling the expected supremum of a random process. There exist a 
powerful methods for this purpose (see, e.g., \cite{Tal14}) that could 
potentially be applied in the present setting to generate insight on the 
structure of random matrices.

Already the simplest possible approach to bounding the suprema of random 
processes, the $\varepsilon$-net method, has proved to be very useful in 
the study of basic random matrix models. The idea behind this approach is 
to approximate the supremum over the unit ball $B$ by the maximum over a 
finite discretization $B_\varepsilon$ of the unit ball, which reduces the 
problem to computing the maximum of a finite number of random variables 
(as we did, for example, in the previous section when we computed 
$\|g\|_\infty$). Let us briefly sketch how this approach works in the 
following basic example.
Let $X$ be the $n\times n$ symmetric random matrix with i.i.d.\ 
standard Gaussian entries above the diagonal. Such a matrix is called a 
\emph{Wigner matrix}. Then for every vector $v\in B$, the random variable 
$\langle v,Xv\rangle$ is Gaussian with variance at most $2$. Now let 
$B_{\varepsilon}$ be a finite subset of the unit ball $B$ in 
$\mathbb{R}^n$ such that every point in $B$ is within distance at most 
$\varepsilon$ from a point in $B_\varepsilon$. Such a set is called an 
\emph{$\varepsilon$-net}, and should be viewed as a uniform discretization 
of the unit ball $B$ at the scale $\varepsilon$.  Then we can bound, for 
small $\varepsilon$,\footnote{
	The first inequality follows by noting that for every $v\in B$,
	choosing $\tilde v\in B_\varepsilon$ such that $\|v-\tilde 
	v\|\le\varepsilon$, we have
	$|\langle v,Xv\rangle| =
	|\langle\tilde v,X\tilde v\rangle 
	+ \langle v-\tilde v,X(v+\tilde v)\rangle|
	\le
	|\langle\tilde v,X\tilde v\rangle| + 2\varepsilon\|X\|$.
}
$$
	\mathbf{E}\|X\| = 
	\mathbf{E}\sup_{v\in B}|\langle v,Xv\rangle|
	\lesssim
	\mathbf{E}\sup_{v\in B_{\varepsilon}}|\langle v,Xv\rangle|
	\lesssim
	\sqrt{\log |B_\varepsilon|},
$$
where we used that the expected maximum of $k$ Gaussian random variables 
with variance $\lesssim 1$ is bounded by $\lesssim \sqrt{\log k}$ (we 
proved this in the previous section using the moment method: note that 
independence was not needed for the upper bound.) A classical argument 
shows that the smallest $\varepsilon$-net in $B$ has cardinality of order 
$\varepsilon^{-n}$, so the above argument yields a bound of order 
$\mathbf{E}\|X\|\lesssim\sqrt{n}$ for Wigner matrices. 
It turns out that this bound is in fact sharp in the present setting: 
Wigner matrices satisfy $\mathbf{E}\|X\|\asymp\sqrt{n}$ (we will prove 
this more carefully in section \ref{sec:fail} below).

Variants of the above argument have proved to be very useful in random 
matrix theory, and we refer to \cite{Ver12} for a systematic development. 
However, $\varepsilon$-net arguments are usually applied to highly 
symmetric situations, such as is the case for Wigner matrices (all entries 
are identically distributed). The problem with the $\varepsilon$-net 
method is that it is sharp essentially only in this situation: this method 
cannot incorporate nontrivial structure. To illustrate this, consider the 
following typical structured example. Fix a certain sparsity pattern of 
the matrix $X$ at the outset (that is, choose a subset of the entries that 
will be forced to zero), and choose the remaining entries to be 
independent standard Gaussians. In this case, a ``good'' discretization of 
the problem cannot simply distribute points uniformly over the unit ball 
$B$, but rather must take into account the geometry of the given sparsity 
pattern. Unfortunately, it is entirely unclear how this is to be 
accomplished in general. For this reason, $\varepsilon$-net methods have 
proved to be of limited use for \emph{structured} random matrices, and 
they will play essentially no role in the remainder of this chapter.

\begin{remark}
\label{rem:gench}
Deep results from the theory of Gaussian processes \cite{Tal14} guarantee 
that the expected supremum of any Gaussian process and of many other 
random processes can be captured sharply by a sophisticated multiscale 
counterpart of the $\varepsilon$-net method called the generic chaining.
Therefore, in principle, it should be possible to capture precisely the 
norm of structured random matrices if one is able to construct a 
near-optimal multiscale net. Unfortunately, the general theory only 
guarantees the existence of such a net, and provides essentially no 
mechanism to construct one in any given situation. From this 
perspective, structured random matrices provide a particularly interesting 
case study of inhomogeneous random processes whose investigation could 
shed new light on these more general mechanisms (this perspective provided 
strong motivation for this author's interest in random matrices). At 
present, however, progress along these lines remains in a very primitive 
state. Note that even the most trivial of examples from the random 
matrix perspective, such as the case where $X$ is a diagonal matrix with 
i.i.d.\ Gaussian entries on the diagonal, require already a delicate 
multiscale net to obtain sharp results; see, e.g., \cite{vH16}.
\end{remark}

As direct control of the random processes that arise from structured 
random matrices is largely intractable, a different approach is needed. To 
this end, the key idea that we will exploit is the use of 
\emph{comparison theorems} to bound the expected supremum of one random 
process by that of another random process. The basic idea is to design a 
suitable comparison process that dominates the random process of Lemma 
\ref{lem:rptriv} but that is easier to control. For this approach to be 
successful, the comparison process must capture the structure of the 
original process while at the same time being amenable to some form of 
explicit computation. In principle there is no reason to expect that this 
is ever possible. Nonetheless, we will repeatedly apply different 
variations on this approach to obtain the best known bounds on structured 
random matrices. Comparison methods are a recurring theme throughout this 
chapter, and we postpone further discussion to the following sections.

Let us note that the random process method is easily extended also to 
non-symmetric matrices: if $X$ is an $n\times m$ rectangular matrix, we 
have
$$
	\|X\| = \sup_{v,w\in B}\langle v,Xw\rangle.
$$
Alternatively, we can use the same symmetrization trick as was illustrated 
in the previous section to reduce to the symmetric case. For this reason, 
we will restrict attention to symmetric matrices in the sequel. Let us 
also note, however, that unlike the moment method, the present approach 
extends readily to other operator norms by replacing the Euclidean unit 
ball $B$ by the unit ball for other norms. In this sense, the random 
process method is substantially more general than the moment method, which 
is restricted to the spectral norm. However, the spectral norm is often 
the most interesting norm in practice in applications of random matrix theory.

\subsection{Roots and poles}

The moment method and random process method discussed in the previous 
sections have proved to be by far the most useful approaches to bounding 
the spectral norms of random matrices, and all results in this chapter 
will be based on one or both of these methods. We want to briefly mention 
a third approach, however, that has recently proved to be useful. It is 
well-known from linear algebra that the eigenvalues of a symmetric matrix 
$X$ are the roots of the characteristic polynomial
$$
	\chi(t) = \det(tI-X),
$$
or, equivalently, the poles of the Stieltjes transform
$$
	s(t) := \mathrm{Tr}[(tI-X)^{-1}] =
	\frac{d}{dt}\log\chi(t).
$$
One could therefore attempt to bound the extreme eigenvalues of $X$ (and 
therefore the spectral norm $\|X\|$) by controlling the location of the 
largest root (pole) of the characteristic polynomial (Stieltjes tranform) 
of $X$, with high probability.

The Stieltjes transform method plays a major role in random matrix theory 
\cite{AGZ10}, as it provides perhaps the simplest route to proving limit 
theorems for the spectral distributions of random matrices. It is possible 
along these lines to prove asymptotic results on the extreme eigenvalues, 
see \cite{BS98} for example. However, as the Stieltjes transform is highly 
nonlinear, it seems to be very difficult to use this approach to address 
nonasymptotic questions for structured random matrices where explicit 
limit information is meaningless. The characteristic polynomial appears at 
first sight to be more promising, as this is a polynomial in the matrix 
entries: one can therefore hope to compute $\mathbf{E}\chi$ exactly. This 
simplicity is deceptive, however, as there is no reason to expect that 
$\mathrm{maxroot}(\mathbf{E}\chi)$ has any relation to the quantity 
$\mathbf{E}\,\mathrm{maxroot}(\chi)$ that we are interested in. It was 
therefore long believed that such an approach does not provide any useful 
tool in random matrix theory. Nonetheless, a determinisitic version of 
this idea plays the crucial role in the recent breakthrough resolution of 
the Kadison-Singer conjecture \cite{MSS15}, so that it is conceivable that 
such an approach could prove to be fruitful in problems of random matrix 
theory (cf.\ \cite{SV13} where related ideas were applied to Stieltjes 
transforms in a random matrix problem). To date, however, these methods 
have not been successefully applied to the problems investigated in this 
chapter, and they will make no further appearance in the sequel.

\section{Khintchine-type inequalities}
\label{sec:nck}

The main aim of this section is to introduce a very general method for 
bounding the spectral norm of structured random matrices. The basic idea, 
due to Lust-Piquard \cite{LP86}, is to prove an analog of the classical 
Khintchine inequality for scalar random variables in the noncommutative 
setting.  This \emph{noncommutative Khintchine inequality} allows us to 
bound the moments of structured random matrices, which immediately results 
in a bound on the spectral norm by Lemma \ref{lem:mommeth}.

The advantage of the noncommutative Khintchine inequality is that it can 
be applied in a remarkably general setting: it does not even require 
independence of the matrix entries. The downside of this inequality is 
that it almost always gives rise to bounds on the spectral norm that are 
suboptimal by a multiplicative factor that is logarithmic in the dimension 
(cf.\ section \ref{sec:bvh}). We will discuss the origin of this 
suboptimality and some potential methods for reducing it in the general 
setting of this section. Much sharper bounds will be obtained in section 
\ref{sec:indep} under the additional restriction that the matrix entries 
are independent.

For simplicity, we will restrict our attention to matrices with Gaussian 
entries, though extensions to other distributions are easily obtained 
(for example, see \cite{MJCFT14}).

\subsection{The noncommutative Khintchine inequality}
\label{sec:nckh}

In this section, we will consider the following very general setting. Let 
$X$ be an $n\times n$ symmetric random matrix with zero mean. The only 
assumption we make on the distribution is that the entries on and above 
the diagonal (that is, those entries that are not fixed by symmetry) are 
centered and jointly Gaussian. In particular, these entries can possess an 
arbitrary covariance matrix, and are assumed to be neither identically 
distributed nor independent. Our aim is to bound the spectral norm $\|X\|$ 
in terms of the given covariance structure of the matrix.

It proves to be convenient to reformulate our random matrix model 
somewhat. Let $A_1,\ldots,A_s$ be \emph{nonrandom} $n\times n$ symmetric 
matrices, and let $g_1,\ldots,g_s$ be independent standard Gaussian 
variables. Then we define the matrix $X$ as
$$
	X = \sum_{k=1}^s g_kA_k.
$$
Clearly $X$ is a symmetric matrix with jointly Gaussian entries. 
Conversely, the reader will convince herself after a moment's reflection 
that any symmetric matrix with centered and jointly Gaussian entries can 
be written in the above form for some choice of $s\le n(n+1)/2$ and 
$A_1,\ldots,A_s$. There is therefore no loss of generality in considering 
the present formulation (we will reformulate our ultimate bounds in a way 
that does not depend on the choice of the coefficient matrices $A_k$).

Our intention is to apply the moment method. To this end, we must obtain 
bounds on the moments $\mathbf{E}[\mathrm{Tr}[X^{2p}]]$ of the matrix $X$. 
It is instructive to begin by considering the simplest possible case where 
the dimension $n=1$.  In this case, $X$ is simply a scalar Gaussian random 
variable with zero mean and variance $\sum_kA_k^2$, and the problem in 
this case reduces to bounding the moments of a scalar Gaussian variable.

\begin{lemma}
\label{lem:gaussmom}
Let $g\sim N(0,1)$. Then $\mathbf{E}[g^{2p}]^{1/2p}\le\sqrt{2p-1}$.
\end{lemma}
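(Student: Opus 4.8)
The plan is to compute the even moments of a standard Gaussian exactly and then verify that the closed form is bounded by $(2p-1)^p$. First I would recall the standard fact that $\mathbf{E}[g^{2p}] = (2p-1)!! = (2p-1)(2p-3)\cdots 3\cdot 1$, the product of the odd numbers up to $2p-1$. There are two natural routes to this identity: integration by parts in the Gaussian integral $\int x^{2p} e^{-x^2/2}\,dx$, which produces the recursion $\mathbf{E}[g^{2p}] = (2p-1)\,\mathbf{E}[g^{2p-2}]$ and hence the double factorial by induction from $\mathbf{E}[g^0]=1$; or Wick's formula / counting perfect matchings of $2p$ points, which gives $(2p-1)!!$ directly. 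The integration-by-parts recursion is the cleanest for a self-contained write-up.

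Once the moment is identified as $(2p-1)!!$, the bound follows immediately: since $(2p-1)!!$ is a product of $p$ factors $1, 3, 5, \ldots, 2p-1$, each of which is at most $2p-1$, we get $(2p-1)!! \le (2p-1)^p$, and taking $2p$-th roots yields $\mathbf{E}[g^{2p}]^{1/2p} \le (2p-1)^{1/2} = \sqrt{2p-1}$. This is the entire argument; there is essentially no obstacle, as the only ingredient is a one-line recursion plus a trivial termwise bound on a product.

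If one wanted to avoid even deriving the recursion, an alternative is to bound $\mathbf{E}[g^{2p}]$ directly: write $\mathbf{E}[g^{2p}] = \int_0^\infty \mathbf{P}[g^{2p} > t]\,dt$ and use the Gaussian tail bound, but this route loses the sharp constant and is strictly worse than the exact computation, so I would not pursue it. The honest statement of the plan is therefore: establish $\mathbf{E}[g^{2p}] = (2p-1)!!$ via the integration-by-parts recursion, then bound the product of the first $p$ odd integers by $(2p-1)^p$. The ``hard part'' here is genuinely trivial — the lemma is stated with this particular constant precisely because $(2p-1)!!$ admits such a clean termwise bound, and the real work of the section lies in the subsequent noncommutative analogue, not in this scalar warmup.
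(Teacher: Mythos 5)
Your proof is correct, and it uses the same key ingredient as the paper — Gaussian integration by parts applied to $f(x)=x^{2p-1}$, giving the recursion $\mathbf{E}[g^{2p}] = (2p-1)\,\mathbf{E}[g^{2p-2}]$. The difference is in how the recursion is closed. You iterate it all the way down to obtain the exact value $(2p-1)!!$ and then bound each of the $p$ odd factors by $2p-1$. The paper instead closes after a single step: it applies Jensen's inequality in the form $\mathbf{E}[g^{2p-2}] \le \mathbf{E}[g^{2p}]^{1-1/p}$ and rearranges the resulting inequality $\mathbf{E}[g^{2p}] \le (2p-1)\,\mathbf{E}[g^{2p}]^{1-1/p}$. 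Both yield the identical constant $\sqrt{2p-1}$, and both are elementary. Your route is arguably more transparent since it identifies the exact moment; the paper's route is worth internalizing anyway, because the pattern ``integrate by parts once, apply H\"older/Jensen, rearrange a self-referential inequality'' is precisely the template reused in the proof of the noncommutative Khintchine inequality (Theorem~\ref{thm:nck}), where no closed-form moment is available and iterating the recursion is not an option. The scalar lemma is deliberately proved in a way that foreshadows the matrix argument.
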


\begin{proof}
We use the following fundamental \emph{gaussian integration by parts} 
property:
$$
	\mathbf{E}[gf(g)] = \mathbf{E}[f'(g)].
$$
To prove it, simply note that integration by parts yields
$$
	\int_{-\infty}^\infty x f(x)\,\frac{e^{-x^2/2}}{\sqrt{2\pi}}\,dx
	=
	\int_{-\infty}^\infty 
	\frac{df(x)}{dx}\,\frac{e^{-x^2/2}}{\sqrt{2\pi}}\,dx
$$
for smooth functions $f$ with compact support, and the conclusion is 
readily extended by approximation to any $C^1$ function for which the 
formula makes sense.

We now apply the integration by parts formula to $f(x)=x^{2p-1}$ as
follows:
$$
	\mathbf{E}[g^{2p}] = \mathbf{E}[g\cdot g^{2p-1}] =
	(2p-1)\mathbf{E}[g^{2p-2}] \le
	(2p-1)\mathbf{E}[g^{2p}]^{1-1/p},
$$
where the last inequality is by Jensen. Rearranging yields the conclusion.
\qed
\end{proof}

Applying Lemma \ref{lem:gaussmom} yields immediately that
$$
	\mathbf{E}[X^{2p}]^{1/2p}\le
	\sqrt{2p-1}\,\Bigg[\sum_{k=1}^s A_k^2\Bigg]^{1/2}
	\quad\mbox{when }n=1.
$$
It was realized by Lust-Piquard \cite{LP86} that the 
analogous inequality holds in any dimension $n$ (the correct dependence of 
the bound on $p$ was obtained later, cf.\ \cite{Pis03}).

\begin{theorem}[Noncommutative Khintchine inequality]
\label{thm:nck}
In the present setting
$$
	\mathbf{E}[\mathrm{Tr}[X^{2p}]]^{1/2p} \le
	\sqrt{2p-1}\,\mathrm{Tr}\Bigg[
	\Bigg(\sum_{k=1}^s A_k^2\Bigg)^p\Bigg]^{1/2p}.
$$
\end{theorem}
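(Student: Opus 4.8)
The plan is to mimic the $n=1$ proof of Lemma~\ref{lem:gaussmom}, replacing scalar Gaussian integration by parts with its matrix-valued analogue and controlling the resulting error terms by H\"older's inequality for the trace. Write $m = 2p$ and set $Q = \sum_{k=1}^s A_k^2$, which plays the role of the scalar variance. The starting point is the identity
$$
	\mathbf{E}[\mathrm{Tr}[X^{m}]]
	= \sum_{k=1}^s \mathbf{E}\big[g_k\,\mathrm{Tr}[A_k X^{m-1}]\big],
$$
obtained by writing $X^m = X\cdot X^{m-1}$ and expanding $X$ in the basis $A_k$. Now I would apply Gaussian integration by parts in each coordinate $g_k$: since $\partial X/\partial g_k = A_k$, the product rule for $\frac{d}{dg_k} X^{m-1} = \sum_{j=0}^{m-2} X^j A_k X^{m-2-j}$ gives
$$
	\mathbf{E}[\mathrm{Tr}[X^{m}]]
	= \sum_{k=1}^s \sum_{j=0}^{m-2}
	\mathbf{E}\big[\mathrm{Tr}[A_k X^j A_k X^{m-2-j}]\big].
$$

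The next step is to bound each summand. By cyclicity of the trace and the inequality $\mathrm{Tr}[BC] \le \mathrm{Tr}[B^2]^{1/2}\mathrm{Tr}[C^2]^{1/2}$ (Cauchy--Schwarz for the trace inner product), one expects
$$
	\mathrm{Tr}[A_k X^j A_k X^{m-2-j}]
	\le \mathrm{Tr}[A_k X^j A_k X^j]^{1/2}\,
	\mathrm{Tr}[A_k X^{m-2-j} A_k X^{m-2-j}]^{1/2},
$$
and then, summing over $k$ first, $\sum_k \mathrm{Tr}[A_k X^{j} A_k X^{j}] = \mathrm{Tr}[Q\,(X^{j})^{\ast} \cdots]$ — more precisely one rewrites $\sum_k A_k M A_k$ in terms of $Q$ when $M$ commutes appropriately, or uses the operator inequality $\sum_k A_k M A_k \le \|M\|\,Q$ for $M\ge 0$. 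The cleanest route is probably: show $\sum_k \mathrm{Tr}[A_k N A_k N] \le \mathrm{Tr}[Q N Q N]^{?}$-type bounds are delicate, so instead induct on $p$ using the "symmetrized" quantity $\mathrm{Tr}[Q^a X^b]$ and reduce everything to powers of $Q$ via repeated Cauchy--Schwarz, collecting the combinatorial factor $\binom{2p}{?} \le (2p-1)^p$ from the $m-1 = 2p-1$ terms in each integration-by-parts step applied $p$ times.

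The main obstacle — and the reason this is a genuine theorem rather than a one-line computation — is that $X$ and the $A_k$ do not commute, so the scalar manipulation $\mathbf{E}[g^{2p}] = (2p-1)\mathbf{E}[g^{2p-2}]$ has no exact analogue; the $m-1$ terms produced by differentiating $X^{m-1}$ are genuinely distinct noncommutative monomials $\mathrm{Tr}[A_k X^j A_k X^{m-2-j}]$, and one must control the "crossed" ones ($j \neq 0, m-2$) where an $A_k$ sits in the middle of a word in $X$. Handling these requires an interpolation/convexity argument: one shows the worst case is $j=0$ (or $j=m-2$), i.e. $\mathrm{Tr}[A_k X^j A_k X^{m-2-j}] \le \mathrm{Tr}[A_k^2 X^{m-2}]$ after symmetrization and summation, using either the Cauchy--Schwarz bound above together with a log-convexity statement for $j \mapsto \mathrm{Tr}[A_k X^j A_k X^j]^{1/2}\mathrm{Tr}[A_k X^{m-2-j}A_k X^{m-2-j}]^{1/2}$, or the operator monotonicity machinery. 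Once the crossed terms are dominated by $(2p-1)\sum_k \mathbf{E}[\mathrm{Tr}[A_k^2 X^{2p-2}]] = (2p-1)\mathbf{E}[\mathrm{Tr}[Q X^{2p-2}]]$, a final induction on $p$ — bounding $\mathbf{E}[\mathrm{Tr}[Q^a X^{2b}]]$ in terms of $\mathbf{E}[\mathrm{Tr}[Q^{a+1}X^{2b-2}]]$ by the same integration-by-parts-plus-Cauchy--Schwarz step — terminates at $\mathrm{Tr}[Q^p]$ and produces the claimed constant $(2p-1)^{1/2}$ after taking $2p$-th roots. I expect the bookkeeping in this induction, and the precise justification that the crossed terms never beat the aligned term, to be where all the real work lies.
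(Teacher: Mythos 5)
Your plan reproduces the paper's overall architecture: Gaussian integration by parts gives the triple sum $\sum_{\ell=0}^{2p-2}\sum_k\mathbf{E}[\mathrm{Tr}[A_kX^\ell A_kX^{2p-2-\ell}]]$, and the central difficulty is indeed the ``exchange lemma'' $\mathrm{Tr}[A_kX^\ell A_kX^{2p-2-\ell}]\le\mathrm{Tr}[A_k^2X^{2p-2}]$. Your convexity-in-the-exponent instinct for proving it is also the right one, and it is even cleaner than you suspect: writing $X=\sum_i\lambda_iv_iv_i^*$, one has
$\mathrm{Tr}[A_kX^\ell A_kX^{2p-2-\ell}]\le\sum_{i,j}|\lambda_i|^\ell|\lambda_j|^{2p-2-\ell}|\langle v_i,A_kv_j\rangle|^2$,
and the right side is convex in $\ell$ on $[0,2p-2]$, so its maximum is at an endpoint; no Cauchy--Schwarz, log-convexity machinery, or operator inequalities such as $\sum_kA_kMA_k\le\|M\|Q$ are needed, and that last inequality would in any case point you in an unhelpful direction (it introduces an operator norm where you want a trace).

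The genuine gap is the closing step. You propose a $p$-step induction, repeatedly integrating by parts on quantities like $\mathbf{E}[\mathrm{Tr}[Q^aX^{2b}]]$. This is both harder and unnecessary. Harder because after one pass you are staring at terms of the form $\mathrm{Tr}[QA_kX^jA_kX^{2p-4-j}]$ with the fixed matrix $Q$ sitting inside the word, and the exchange lemma as stated no longer applies; you would have to prove a generalized version and verify the convexity argument still goes through with the extra $Q$. Unnecessary because after a \emph{single} integration by parts and the exchange lemma you already have $\mathbf{E}[\mathrm{Tr}[X^{2p}]]\le(2p-1)\mathbf{E}[\mathrm{Tr}[QX^{2p-2}]]$, and then trace-H\"older (with exponents $p$ and $p/(p-1)$) plus Jensen gives $\mathbf{E}[\mathrm{Tr}[QX^{2p-2}]]\le\mathrm{Tr}[Q^p]^{1/p}\mathbf{E}[\mathrm{Tr}[X^{2p}]]^{1-1/p}$. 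Rearranging immediately yields the theorem with the constant $\sqrt{2p-1}$. This is exactly the same self-referential trick used in the scalar proof of Lemma~\ref{lem:gaussmom}, where $\mathbf{E}[g^{2p-2}]\le\mathbf{E}[g^{2p}]^{1-1/p}$ by Jensen closes the argument in one line; the whole point of the noncommutative proof is that it mirrors the scalar one verbatim once the exchange lemma is in hand. Your worry about the combinatorial bookkeeping ($\binom{2p}{?}$ vs.\ $(2p-1)^p$) signals that you sensed the induction was going somewhere awkward; the H\"older-and-rearrange step makes the bookkeeping trivial.
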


By combining this bound with Lemma \ref{lem:mommeth}, we immediately 
obtain the following conclusion regarding the spectral norm of the matrix 
$X$.

\begin{corollary}
\label{cor:nck}
In the setting of this section,
$$
	\mathbf{E}\|X\| \lesssim \sqrt{\log n}\,
	\Bigg\|\sum_{k=1}^sA_k^2\Bigg\|^{1/2}.
$$
\end{corollary}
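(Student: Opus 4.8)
The plan is to deduce Corollary~\ref{cor:nck} directly by chaining Theorem~\ref{thm:nck} with the moment method of Lemma~\ref{lem:mommeth}. The point is that the noncommutative Khintchine inequality already controls all the moments $\mathbf{E}[\mathrm{Tr}[X^{2p}]]^{1/2p}$ in terms of the single matrix $V := \sum_{k=1}^s A_k^2$, and Lemma~\ref{lem:mommeth} tells us precisely that $\mathbf{E}\|X\|^{2p}$ is comparable to $\mathbf{E}[\mathrm{Tr}[X^{2p}]]$ once $p\asymp\log n$; so the whole argument is a matter of bookkeeping the trace on the right-hand side.

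Concretely, first I would fix $p = \lceil\log n\rceil$. By Lemma~\ref{lem:mommeth} (in the form stated right after it, $\mathbf{E}[\|X\|^{2p}]^{1/2p}\asymp\mathbf{E}[\mathrm{Tr}[X^{2p}]]^{1/2p}$ for $p\asymp\log n$), it suffices to bound $\mathbf{E}[\mathrm{Tr}[X^{2p}]]^{1/2p}$, and Theorem~\ref{thm:nck} gives
\[
	\mathbf{E}[\mathrm{Tr}[X^{2p}]]^{1/2p} \le \sqrt{2p-1}\,\mathrm{Tr}[V^p]^{1/2p}.
\]
Now the trace on the right is handled exactly as in the proof of Lemma~\ref{lem:lplinf}, applied to the eigenvalues of the positive semidefinite matrix $V$: since $V$ has $n$ eigenvalues, $\mathrm{Tr}[V^p]^{1/p}\le n^{1/p}\|V\| \asymp \|V\|$ when $p\asymp\log n$ (here $n^{1/p} = e^{(\log n)/p}\asymp 1$, just as in Lemma~\ref{lem:lplinf}). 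Hence $\mathrm{Tr}[V^p]^{1/2p}\lesssim \|V\|^{1/2}$, and combining the displays with $\sqrt{2p-1}\lesssim\sqrt{\log n}$ yields
\[
	\mathbf{E}\|X\| \le \mathbf{E}[\|X\|^{2p}]^{1/2p} \lesssim \sqrt{\log n}\,\|V\|^{1/2} = \sqrt{\log n}\,\Bigg\|\sum_{k=1}^s A_k^2\Bigg\|^{1/2},
\]
where the first inequality is Jensen's inequality as noted in Remark~\ref{rem:badmoment}.

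There is essentially no genuine obstacle here — the corollary is a routine consequence of the theorem that precedes it — but the one place to be slightly careful is the passage from $\mathrm{Tr}[V^p]$ to $\|V\|^p$: one must remember that $V = \sum_k A_k^2$ is positive semidefinite so that $\mathrm{Tr}[V^p] = \sum_i \lambda_i(V)^p$ with $\lambda_i(V)\ge 0$, and that $\|V\| = \lambda_{\max}(V)$, which makes the elementary sandwich $\lambda_{\max}(V)^p \le \mathrm{Tr}[V^p]\le n\,\lambda_{\max}(V)^p$ applicable. The only other thing worth flagging is that the $\asymp$ in Lemma~\ref{lem:mommeth} absorbs universal multiplicative constants, which is exactly why the corollary is stated with $\lesssim$ rather than with an explicit constant; tracking a sharp constant would require a more careful choice of $p$ and is deliberately not attempted here.
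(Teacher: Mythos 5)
Your proof is correct and is exactly the argument the paper has in mind when it says the corollary follows "by combining this bound with Lemma~\ref{lem:mommeth}": pick $p\asymp\log n$, apply Theorem~\ref{thm:nck}, and absorb $\mathrm{Tr}[V^p]^{1/2p}\lesssim\|V\|^{1/2}$ via the same $n^{1/p}\asymp 1$ observation underlying Lemma~\ref{lem:lplinf}. Nothing is missing, and the care you flag about $V$ being positive semidefinite is precisely the small point worth checking.
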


This bound is expressed directly in terms of the coefficient matrices 
$A_k$ that determine the structure of $X$, and has proved to be extremely 
useful in applications of random matrix theory in functional analysis and 
applied mathematics.  To what extent this bound is sharp will be discussed 
in the next section.

\begin{remark}
Recall that our bounds apply to any symmetric matrix $X$ with centered and 
jointly Gaussian entries. Our bounds should therefore not depend on the 
choice of representation in terms of the coefficient 
matrices $A_k$, which is not unique.  It is easily verified that this is 
the case. Indeed, it suffices to note that
$$
	\mathbf{E}X^2 = \sum_{k=1}^s A_k^2,
$$
so that we can express the conclusion of Theorem \ref{thm:nck} 
and Corollary \ref{cor:nck} as
$$
	\mathbf{E}[\mathrm{Tr}[X^{2p}]]^{1/2p} \lesssim \sqrt{p}\,
	\mathrm{Tr}[(\mathbf{E}X^2)^p]^{1/2p},\qquad\quad
	\mathbf{E}\|X\| \lesssim \sqrt{\log n}\,
	\|\mathbf{E}X^2\|^{1/2}
$$
without reference to the coefficient matrices $A_k$. We note that the 
quantity $\|\mathbf{E}X^2\|$ has a natural interpretation: it measures
the size of the matrix $X$ ``on average'' (as the expectation in this
quantity is \emph{inside} the spectral norm).
\end{remark}

We now turn to the proof of Theorem \ref{thm:nck}.  We begin by noting 
that the proof follows immediately from Lemma \ref{lem:gaussmom} not just 
when $n=1$, but also in any dimension $n$ under the additional assumption 
that the matrices $A_1,\ldots,A_s$ commute.  Indeed, in this case we can 
work without loss of generality in a basis in which all the matrices $A_k$ 
are simultaneously diagonal, and the result follows by applying Lemma 
\ref{lem:gaussmom} to every diagonal entry of $X$. The crucial idea behind 
the proof of Theorem \ref{thm:nck} is that \emph{the commutative case is 
in fact the worst case situation}! This idea will appear very explicitly 
in the proof: we will simply repeat the proof of Lemma \ref{lem:gaussmom}, 
and the result will follow by showing that we can permute the order of the 
matrices $A_k$ at the pivotal point in the proof.  (The simple proof given 
here follows \cite{Tro15b}.)

\begin{proof}[Proof of Theorem \ref{thm:nck}]
As in the proof of Lemma \ref{lem:gaussmom}, we obtain
\begin{align*}
	\mathbf{E}[\mathrm{Tr}[X^{2p}]] &=
	\mathbf{E}[\mathrm{Tr}[X\cdot X^{2p-1}]] \\ &=
	\sum_{k=1}^s
	\mathbf{E}[g_k\mathrm{Tr}[A_k X^{2p-1}]] \\ &=
	\sum_{\ell=0}^{2p-2}
	\sum_{k=1}^s
	\mathbf{E}[\mathrm{Tr}[A_kX^\ell A_k X^{2p-2-\ell}]]
\end{align*}
using Gaussian integration by parts. The crucial step in the proof is the 
observation that permuting $A_k$ and $X^\ell$ inside the trace can only 
increase the bound.

\begin{lemma}
\label{lem:exchg}
$\mathrm{Tr}[A_kX^\ell A_k X^{2p-2-\ell}] \le
\mathrm{Tr}[A_k^2X^{2p-2}]$.
\end{lemma}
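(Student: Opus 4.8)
The plan is to prove Lemma~\ref{lem:exchg} as a purely deterministic statement: it holds for arbitrary real symmetric matrices $A_k$ and $X$ and any integer $0\le\ell\le 2p-2$, and the moment bound in Theorem~\ref{thm:nck} then follows simply by taking traces and expectations. The guiding idea is that the inequality becomes transparent once we diagonalize $X$. After writing both sides in the eigenbasis of $X$, each trace turns into a sum over \emph{pairs} of eigenvalues of $X$, weighted by nonnegative coefficients built from $A_k$, and the whole claim collapses to an elementary scalar inequality about real numbers.

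To carry this out, I would write $X=U\Lambda U^*$ with $U$ orthogonal and $\Lambda=\mathrm{diag}(\lambda_1,\ldots,\lambda_n)$, and set $B=U^*A_kU$, which is again symmetric. Using $X^\ell=U\Lambda^\ell U^*$, cyclicity of the trace, and the symmetry $B_{ij}B_{ji}=B_{ij}^2$, one finds
$$
	\mathrm{Tr}[A_kX^\ell A_kX^{2p-2-\ell}]
	=\mathrm{Tr}[B\Lambda^\ell B\Lambda^{2p-2-\ell}]
	=\sum_{i,j}B_{ij}^2\,\lambda_i^{2p-2-\ell}\lambda_j^{\ell},
$$
and similarly $\mathrm{Tr}[A_k^2X^{2p-2}]=\sum_{i,j}B_{ij}^2\,\lambda_i^{2p-2}$. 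Since in both sums the pair $(i,j)$ ranges over all ordered pairs and $B_{ij}^2=B_{ji}^2$, I can symmetrize: averaging the $(i,j)$ and $(j,i)$ contributions and using $B_{ij}^2\ge 0$, it suffices to prove the pointwise bound
$$
	\lambda_i^{2p-2-\ell}\lambda_j^{\ell}+\lambda_i^{\ell}\lambda_j^{2p-2-\ell}
	\le\lambda_i^{2p-2}+\lambda_j^{2p-2}
$$
for every pair $(i,j)$.

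Thus everything reduces to the scalar claim: if $m,\ell\ge 0$ are integers with $m+\ell$ even, then $a^mb^\ell+a^\ell b^m\le a^{m+\ell}+b^{m+\ell}$ for all $a,b\in\mathbb{R}$. I would establish this from the factorization
$$
	a^{m+\ell}+b^{m+\ell}-a^mb^\ell-a^\ell b^m=(a^m-b^m)(a^\ell-b^\ell),
$$
after which it only remains to check that the right-hand side is nonnegative. This is the one step that genuinely requires care, since $a$ and $b$ may be negative (they are eigenvalues of $X$), and I expect it to be the only real obstacle. The resolution is a parity observation: because $m+\ell$ is even, $m$ and $\ell$ have the same parity. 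If both are odd, then $t\mapsto t^m$ and $t\mapsto t^\ell$ are increasing on $\mathbb{R}$, so $a^m-b^m$ and $a^\ell-b^\ell$ both have the sign of $a-b$; if both are even, then $a^m-b^m=|a|^m-|b|^m$ and $a^\ell-b^\ell=|a|^\ell-|b|^\ell$ both have the sign of $|a|-|b|$. In either case the two factors share a common sign, so their product is $\ge 0$. Unwinding the symmetrization and the change of basis then yields the lemma; the diagonalization and symmetrization steps are routine bookkeeping.
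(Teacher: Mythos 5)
Your proof is correct. It follows the same overall strategy as the paper: diagonalize $X$, express both traces as sums over pairs of eigenvalues weighted by the nonnegative coefficients $|\langle v_i,A_kv_j\rangle|^2 = B_{ij}^2$, and reduce to a pointwise scalar inequality. The one place where you diverge is in how you establish that pointwise inequality. The paper first bounds $\lambda_i^\ell\lambda_j^{2p-2-\ell}$ by $|\lambda_i|^\ell|\lambda_j|^{2p-2-\ell}$ and then observes that the resulting sum is convex in $\ell$ (each term being an exponential in $\ell$), so its maximum on $[0,2p-2]$ is attained at an endpoint, which equals $\mathrm{Tr}[A_k^2X^{2p-2}]$. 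You instead symmetrize over $(i,j)\leftrightarrow(j,i)$ (using $B_{ij}^2=B_{ji}^2$), apply the factorization $a^{m+\ell}+b^{m+\ell}-a^mb^\ell-a^\ell b^m=(a^m-b^m)(a^\ell-b^\ell)$, and invoke a parity argument to show the two factors share a sign. Both routes are valid; yours is slightly more elementary (no appeal to convexity of exponentials, and no need to pass to absolute values) and handles the sign and zero-eigenvalue edge cases fully explicitly, at the modest cost of one extra symmetrization step. In effect you have given a self-contained proof of the same rearrangement inequality $a^mb^\ell+a^\ell b^m\le a^{m+\ell}+b^{m+\ell}$ that the paper's convexity remark is gesturing at.
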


\begin{proof}
Let us write $X$ in terms of its eigendecomposition
$X = \sum_{i=1}^n \lambda_i v_iv_i^*$,
where $\lambda_i$ and $v_i$ denote the eigenvalues and eigenvectors of 
$X$.  Then we can write
$$
	\mathrm{Tr}[A_kX^\ell A_k X^{2p-2-\ell}] =
	\sum_{i,j=1}^n
	\lambda_i^\ell \lambda_j^{2p-2-\ell}
	|\langle v_i,A_kv_j\rangle|^2 \le
	\sum_{i,j=1}^n
	|\lambda_i|^\ell |\lambda_j|^{2p-2-\ell}
	|\langle v_i,A_kv_j\rangle|^2.
$$
But note that the right-hand side is a convex function of $\ell$, so that 
its maximum in the interval $[0,2p-2]$ is attained either at $\ell=0$ or 
$\ell=2p-2$.  This yields
$$
	\mathrm{Tr}[A_kX^\ell A_k X^{2p-2-\ell}] \le
	        \sum_{i,j=1}^n
        |\lambda_j|^{2p-2}
        |\langle v_i,A_kv_j\rangle|^2 =
	\mathrm{Tr}[A_k^2X^{2p-2}],
$$
and the proof is complete.
\qed
\end{proof}

We now complete the proof of the noncommutative Khintchine inequality.
Substituting Lemma \ref{lem:exchg} into the previous inequality yields
\begin{align*}
	\mathbf{E}[\mathrm{Tr}[X^{2p}]] &\le
	(2p-1)
	\sum_{k=1}^s\mathbf{E}[\mathrm{Tr}[A_k^2 X^{2p-2}]]
	\\ &\le
	(2p-1)\,
	\mathrm{Tr}\Bigg[
        \Bigg(\sum_{k=1}^s A_k^2\Bigg)^p\Bigg]^{1/p}
	\mathbf{E}[\mathrm{Tr}[X^{2p}]]^{1-1/p},
\end{align*}
where we used H\"older's inequality $\mathrm{Tr}[YZ] \le
\mathrm{Tr}[|Y|^p]^{1/p}\mathrm{Tr}[|Z|^{p/(p-1)}]^{1-1/p}$ in the last 
step.  Rearranging this expression yields the desired conclusion.
\qed
\end{proof}

\begin{remark}
The proof of Corollary \ref{cor:nck} given here, using the moment
method, is exceedingly simple. However, by its nature, it can only
bound the spectral norm of the matrix, and would be useless if we wanted
to bound other operator norms. It is worth noting that an alternative
proof of Corollary \ref{cor:nck} was developed by Rudelson, using deep
random process machinery described in Remark \ref{rem:gench}, for the
special case where the matrices $A_k$ are all of rank one (see
\cite[Prop.\ 16.7.4]{Tal14} for an exposition of this proof). The
advantage of this approach is that it extends to some other operator
norms, which proves to be useful in Banach space theory. It is remarkable,
however, that no random process proof of Corollary 
\ref{cor:nck} is known to date in the general setting.
\end{remark}

\subsection{How sharp are Khintchine inequalities?}
\label{sec:fail}

Corollary \ref{cor:nck} provides a very convenient bound on the spectral 
norm $\|X\|$:  it is expressed directly in terms of the coefficients $A_k$ 
that define the structure of the matrix $X$. However, is this structure 
captured \emph{correctly}?  To understand the degree to which Corollary 
\ref{cor:nck} is sharp, let us augment it with a lower bound.

\begin{lemma}
\label{lem:lowernck}
Let $X=\sum_{k=1}^s g_kA_k$ as in the previous section. Then
$$
	\Bigg\|\sum_{k=1}^sA_k^2\Bigg\|^{1/2} \lesssim
	\mathbf{E}\|X\| \lesssim \sqrt{\log n}\,
	\Bigg\|\sum_{k=1}^sA_k^2\Bigg\|^{1/2}.
$$
That is, the noncommutative Khintchine bound is sharp up to a
logarithmic factor.
\end{lemma}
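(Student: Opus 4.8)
The upper bound is precisely Corollary \ref{cor:nck}, once we recall (as in the remark following it) that $\mathbf{E}X^2=\sum_{k=1}^sA_k^2$, so that $\|\sum_kA_k^2\|=\|\mathbf{E}X^2\|$. Thus the whole content of the statement is the lower bound $\|\sum_kA_k^2\|^{1/2}\lesssim\mathbf{E}\|X\|$. The plan is to exhibit a single \emph{deterministic} direction along which $X$ is already, on average, as large as the Khintchine bound demands. Let $v$ be a unit eigenvector of the positive semidefinite matrix $\mathbf{E}X^2=\sum_kA_k^2$ associated with its largest eigenvalue $\|\mathbf{E}X^2\|$. Since $\|Xv\|\le\|X\|$, it suffices to show $\mathbf{E}\|Xv\|\gtrsim\|\mathbf{E}X^2\|^{1/2}$.

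The point of this choice of $v$ is that $Xv=\sum_{k=1}^sg_k(A_kv)$ is a centered Gaussian vector in $\mathbb{R}^n$ whose second moment is pinned exactly to the quantity we are after: using $X=X^*$,
$$
	\mathbf{E}\|Xv\|^2 = \mathbf{E}\langle v,X^2v\rangle = \langle v,(\mathbf{E}X^2)v\rangle = \|\mathbf{E}X^2\|.
$$
So the lower bound reduces to the elementary fact that the first and second moments of the Euclidean norm of a Gaussian vector are comparable up to a universal constant. I would prove this by hand: for any centered Gaussian vector $Y$, diagonalizing its covariance shows that $\|Y\|^2$ has the law of $S:=\sum_j\beta_j^2g_j^2$ for i.i.d.\ standard Gaussians $g_j$ and suitable $\beta_j\ge 0$. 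A one-line moment computation gives $\mathbf{E}S^2=(\sum_j\beta_j^2)^2+2\sum_j\beta_j^4\le 3(\mathbf{E}S)^2$, while H\"older's inequality with exponents $3/2$ and $3$ (applied to $S=S^{1/3}\cdot S^{2/3}$) gives $\mathbf{E}S\le(\mathbf{E}\sqrt S)^{2/3}(\mathbf{E}S^2)^{1/3}$. Combining the two, $\mathbf{E}\|Y\|=\mathbf{E}\sqrt S\ge(\mathbf{E}S)^{3/2}/(\mathbf{E}S^2)^{1/2}\ge\tfrac1{\sqrt3}(\mathbf{E}\|Y\|^2)^{1/2}$. (Alternatively one could invoke Gaussian concentration for the $1$-Lipschitz functional $\|\cdot\|$, but the moment computation above is self-contained and sharper for our purposes.)

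Assembling the pieces yields $\mathbf{E}\|X\|\ge\mathbf{E}\|Xv\|\ge\tfrac1{\sqrt3}(\mathbf{E}\|Xv\|^2)^{1/2}=\tfrac1{\sqrt3}\|\sum_kA_k^2\|^{1/2}$, which is the asserted lower bound. There is no real obstacle here: every step is routine, and the only mildly substantive ingredient is the Kahane--Khintchine-type comparison of the first and second moments, which in this Gaussian-vector setting collapses to the short computation above. It is worth pointing out \emph{why} the argument cannot do better than a constant-factor lower bound: it probes $X$ in a single fixed direction, and therefore is blind to the dimensional effects that can inflate $\mathbf{E}\|X\|$ by a $\sqrt{\log n}$ factor. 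The diagonal example $X=\mathrm{diag}(g_1,\ldots,g_n)$, for which $\|\sum_kA_k^2\|^{1/2}=1$ while $\mathbf{E}\|X\|\asymp\sqrt{\log n}$, shows that the gap in Lemma \ref{lem:lowernck} is genuine and cannot be closed without further hypotheses.
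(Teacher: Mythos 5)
Your proof is correct, and it takes a genuinely different route from the paper's. The paper first applies Jensen to get $\mathbf{E}\|X\|^2 \ge \|\mathbf{E}X^2\|$ and then reduces to showing $\mathrm{Var}\|X\| \lesssim (\mathbf{E}\|X\|)^2$; this is done via Gaussian concentration (Lemma \ref{lem:gaussconc}) for the Lipschitz functional $x\mapsto\|\sum_k x_kA_k\|$, together with the slightly delicate observation that the Lipschitz constant $\sigma_*=\sup_{v\in B}(\sum_k\langle v,A_kv\rangle^2)^{1/2}$ is itself bounded by $\sqrt{\pi/2}\,\mathbf{E}\|X\|$. You instead probe $X$ along a single deterministic direction $v$ (the top eigenvector of $\mathbf{E}X^2$), use the pointwise bound $\|X\|\ge\|Xv\|$, and then establish the reverse H\"older inequality $\mathbf{E}\|Y\|\gtrsim(\mathbf{E}\|Y\|^2)^{1/2}$ for a Gaussian vector $Y$ by a direct moment computation on $S=\|Y\|^2=\sum_j\beta_j^2g_j^2$; your computation of $\mathbf{E}S^2=(\sum\beta_j^2)^2+2\sum\beta_j^4\le 3(\mathbf{E}S)^2$ and the $3/2$--$3$ H\"older split are both correct. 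Your argument is more elementary and entirely self-contained (no concentration machinery), and your closing remark that a fixed-direction probe can never capture the dimensional $\sqrt{\log n}$ gap is exactly the right intuition. What the paper's route buys in exchange is the quantitatively stronger fact $\mathbf{E}\|X\|^2\lesssim(\mathbf{E}\|X\|)^2$ (fluctuations of $\|X\|$ are controlled by its mean), which is reused elsewhere in the chapter and introduces the parameter $\sigma_*$ that reappears in the later discussion; your argument proves only the stated lower bound, which is all the lemma requires.
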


\begin{proof}
The upper bound in Corollary \ref{cor:nck}, and it remains to prove the 
lower bound. A slightly simpler bound is immediate by Jensen's inequality: 
we have
$$
	\mathbf{E}\|X\|^2 \ge
	\|\mathbf{E}X^2\| =
	\Bigg\|\sum_{k=1}^sA_k^2\Bigg\|.
$$
It therefore remains to show that $(\mathbf{E}\|X\|)^2\gtrsim 
\mathbf{E}\|X\|^2$, or, equivalently, that
$\mathrm{Var}\|X\|\lesssim(\mathbf{E}\|X\|)^2$. To bound the fluctuations 
of 
the spectral norm, we recall an important property of Gaussian random 
variables (see, for example, \cite{Pis86}).

\begin{lemma}[Gaussian concentration]
\label{lem:gaussconc}
Let $g$ be a standard Gaussian vector in $\mathbb{R}^n$,
let $f:\mathbb{R}^n\to\mathbb{R}$ be smooth, and let $p\ge 1$.
Then
$$
	[\mathbf{E}(f(g)-\mathbf{E}f(g))^p]^{1/p}
	\lesssim
	\sqrt{p}\, [\mathbf{E}\|\nabla f(g)\|^p]^{1/p}.
$$
\end{lemma}

\begin{proof}
Let $g'$ be an independent copy of $g$, and define
$g(\varphi) = g\sin\varphi+g'\cos\varphi$.  Then
$$
	f(g)-f(g') =
	\int_0^{\pi/2} \frac{d}{d\varphi} f(g(\varphi))\,d\varphi
	=
	\int_0^{\pi/2} \langle g'(\varphi),\nabla f(g(\varphi))
	\rangle\,d\varphi,
$$
where $g'(\varphi)=\frac{d}{d\varphi}g(\varphi)$.
Applying Jensen's inequality twice gives
$$
	\mathbf{E}(f(g)-\mathbf{E}f(g))^p \le
	\mathbf{E}(f(g)-f(g'))^p \le
	\frac{2}{\pi}
	\int_0^{\pi/2} 
	\mathbf{E}(\tfrac{\pi}{2}\langle g'(\varphi),\nabla f(g(\varphi))
	\rangle)^p\,d\varphi.
$$
Now note that $(g(\varphi),g'(\varphi)) \stackrel{d}{=} (g,g')$ 
for every $\varphi$. We can therefore apply Lemma \ref{lem:gaussmom}
conditionally on $g(\varphi)$ to estimate for every $\varphi$
$$
	[\mathbf{E}\langle g'(\varphi),\nabla f(g(\varphi))
        \rangle^p]^{1/p} \lesssim
	\sqrt{p}\,\mathbf{E}\|\nabla f(g(\varphi))\|^p]^{1/p} =
	\sqrt{p}\,\mathbf{E}\|\nabla f(g)\|^p]^{1/p},
$$
and substituting into the above expression completes the proof.
\qed
\end{proof}

We apply Lemma \ref{lem:gaussconc} to the function
$f(x) = \|\sum_{k=1}^sx_kA_k\|$.  Note that
\begin{align*}
	|f(x)-f(x')| &\le
	\Bigg\|
	\sum_{k=1}^s(x_k-x_k')A_k
	\Bigg\| =
	\sup_{v\in B}
	\Bigg|
	\sum_{k=1}^s(x_k-x_k')\langle v,A_kv\rangle\Bigg|
	\\ &
	\le
	\|x-x'\|
	\sup_{v\in B}
	\Bigg[
	\sum_{k=1}^s\langle v,A_kv\rangle^2
	\Bigg]^{1/2}
	=: \sigma_*\|x-x'\|.
\end{align*}
Thus $f$ is $\sigma_*$-Lipschitz, so $\|\nabla f\|\le\sigma_*$, and
Lemma \ref{lem:gaussconc} yields $\mathrm{Var}\|X\|\lesssim\sigma_*^2$.
But as
$$
	\sigma_* =
	\sqrt{\frac{\pi}{2}}
	\sup_{v\in B}
	\mathbf{E}\Bigg|
	\sum_{k=1}^sg_k\langle v,A_kv\rangle
	\Bigg|
	\le
	\sqrt{\frac{\pi}{2}}\mathbf{E}\|X\|,
$$
we have $\mathrm{Var}\|X\|\lesssim(\mathbf{E}\|X\|)^2$, and
the proof is complete.
\qed
\end{proof}

Lemma \ref{lem:lowernck} shows that the structural quantity 
$\sigma:=\|\sum_{k=1}^sA_k^2\|^{1/2}=\|\mathbf{E}X^2\|^{1/2}$ that appears 
in the noncommutative 
Khintchine inequality is very natural: the expected spectral norm 
$\mathbf{E}\|X\|$ is controlled by $\sigma$ up to a logarithmic factor in 
the dimension. It is not at all clear, \emph{a priori}, whether the 
upper or lower bound in Lemma \ref{lem:lowernck} is sharp. It turns out 
that either the upper bound or the lower bound may be sharp in different 
situations. Let us illustrate this in two extreme examples.

\begin{example}[Diagonal matrix]
\label{ex:diag}
Consider the case where $X$ is a diagonal matrix
$$
	X = \left[
	\begin{matrix}
	g_1 &     &        &     \\
	    & g_2 &        &     \\
	    &     & \ddots &     \\
	    &     &        & g_n \\
	\end{matrix}
	\right]
$$
with i.i.d.\ standard Gaussian entries on the diagonal. In this case,
$$
	\mathbf{E}\|X\| = \mathbf{E}\|g\|_\infty \asymp \sqrt{\log n}.
$$
On the other hand, we clearly have
$$
	\sigma = \|\mathbf{E}X^2\|^{1/2} = 1,
$$
so the upper bound in Lemma \ref{lem:lowernck} is sharp. This shows that
the logarithmic factor in the noncommutative Khintchine inequality
\emph{cannot} be removed.
\end{example}

\begin{example}[Wigner matrix]
\label{ex:wigner}
Let $X$ be a symmetric matrix
$$
	X = \left[
        \begin{matrix}
        g_{11} & g_{12} & \cdots & g_{1n} \\
        g_{12} & g_{22} &        & g_{2n} \\
        \vdots &        & \ddots & \vdots \\
        g_{1n} & g_{2n} & \cdots & g_{nn} \\
        \end{matrix}
        \right] 
$$
with i.i.d.\ standard Gaussian entries on and above the diagonal.
In this case
$$
	\sigma = \|\mathbf{E}X^2\|^{1/2} = \sqrt{n}.
$$
Thus Lemma \ref{lem:lowernck} yields the bounds
$$
	\sqrt{n} \lesssim \mathbf{E}\|X\|\lesssim
	\sqrt{n\log n}.
$$
Which bound is sharp? A hint can be obtained from what is
perhaps the most classical result in random matrix theory: the empirical
spectral distribution of the matrix $n^{-1/2}X$ (that is, the random 
probability measure on $\mathbb{R}$ that places a point mass on every 
eigenvalue of $n^{-1/2}X$) converges weakly to the Wigner semicircle
distribution $\frac{1}{2\pi}\sqrt{(4-x^2)_+}\,dx$ \cite{AGZ10,Tao12}.
Therefore, when the dimension $n$ is large, the eigenvalues of $X$ are 
approximately distributed according to the following density:
\begin{center}
\vskip.2cm
\begin{tikzpicture}
\draw[thick,<->] (0,0) to (5,0);
\draw[thick,->] (2.5,0) to (2.5,2.5);
\draw[thick] (2.5,0) to (2.5,-.1) node[below] {$0$};
\draw[thick] (.5,0) to (.5,-.1) node[below] {$-2\sqrt{n}$};
\draw[thick] (4.5,0) to (4.5,-.1) node[below] {$2\sqrt{n}$};
\draw[thick,blue] plot[samples=200,domain=.5:4.5] function {(4-(x-2.5)**2)**.5};
\end{tikzpicture}
\vskip.2cm
\end{center}
This picture strongly suggests that the spectrum of $X$ is supported
at least approximately in the interval $[-2\sqrt{n},2\sqrt{n}]$, which 
implies that $\|X\|\asymp\sqrt{n}$.  

\begin{lemma}
\label{lem:wigner}
For the Wigner matrix of Example \ref{ex:wigner},
$\mathbf{E}\|X\|\asymp\sqrt{n}$.
\end{lemma}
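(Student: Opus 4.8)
The plan is to prove the two matching bounds $\mathbf{E}\|X\| \lesssim \sqrt{n}$ and $\mathbf{E}\|X\| \gtrsim \sqrt{n}$ separately. The lower bound is the cheap direction: by Lemma~\ref{lem:lowernck} (or directly by Jensen as in its proof) we already have $\mathbf{E}\|X\| \gtrsim \|\mathbf{E}X^2\|^{1/2} = \sqrt{n}$, since $\mathbf{E}X^2 = \sum_k A_k^2$ evaluates to a multiple of the identity of the right size for the Wigner model. So the real content is the upper bound $\mathbf{E}\|X\| \lesssim \sqrt{n}$, which says that the stray $\sqrt{\log n}$ factor coming out of the noncommutative Khintchine inequality (Corollary~\ref{cor:nck}) is spurious in this highly symmetric case.

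For the upper bound I would use the moment method, i.e. Lemma~\ref{lem:mommeth}: it suffices to show $\mathbf{E}[\mathrm{Tr}[X^{2p}]]^{1/2p} \lesssim \sqrt{n}$ for $p \asymp \log n$, and in fact I would prove the stronger statement $\mathbf{E}[\mathrm{Tr}[X^{2p}]] \le C^p\, n\, n^p$ for all $p$, which then feeds into Lemma~\ref{lem:mommeth} with an extra harmless $n^{1/2p} \asymp 1$ factor. The key step is the combinatorial expansion
$$
\mathbf{E}[\mathrm{Tr}[X^{2p}]] = \sum_{i_1,\ldots,i_{2p}} \mathbf{E}[X_{i_1 i_2} X_{i_2 i_3} \cdots X_{i_{2p} i_1}],
$$
where the sum is over closed walks of length $2p$ on the vertex set $\{1,\ldots,n\}$. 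Since the entries are independent (up to symmetry) and centered Gaussian, a term survives only if every edge traversed by the walk is traversed an even number of times; and because Gaussian moments are bounded ($\mathbf{E}[g^{2m}] \le (2m-1)!! \le (2m)^m$), each surviving term is at most $(2p)^p$ or so. The combinatorial heart is the standard fact that the number of closed walks of length $2p$ in which every edge is repeated is at most $\binom{2p}{p} n^{p+1} \le 4^p n^{p+1}$: such a walk visits at most $p+1$ distinct vertices, and once the ``shape'' of the walk (a pairing-type structure, encoded by a Dyck-like path of length $2p$, of which there are at most $4^p$) is fixed, the vertices can be assigned in at most $n^{p+1}$ ways. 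Collecting these estimates gives $\mathbf{E}[\mathrm{Tr}[X^{2p}]] \le (Cp)^p n^{p+1}$, hence $\mathbf{E}[\mathrm{Tr}[X^{2p}]]^{1/2p} \le \sqrt{Cp}\, n^{1/2}\, n^{1/2p} \asymp \sqrt{n\log n}$ for $p \asymp \log n$ — which is not yet good enough.

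The main obstacle, therefore, is shaving the last $\sqrt{\log n}$: the naive bound $\mathbf{E}[g^{2m}] \le (2m)^m$ on the Gaussian edge-weights is too lossy, and one must instead exploit that in a closed walk of length $2p$ with $r$ distinct edges, the exponents $2m_1,\ldots,2m_r$ sum to $2p$, so the product of Gaussian moments $\prod_j (2m_j-1)!!$ is, on average over walk shapes, much smaller than the worst case. The cleanest route is to organize the expansion so that only walks with \emph{exactly} $p$ distinct edges and $p+1$ distinct vertices (the ``tree-like'' walks counted by Catalan numbers $C_p \le 4^p$) contribute at leading order $n^{p+1}$, each carrying weight exactly $1$ since every edge is traversed exactly twice; all other walk types visit fewer vertices and lose at least a factor $n^{-1}$ per collapsed vertex, which comfortably absorbs the $\mathrm{poly}(p)$ combinatorial overcount. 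This gives $\mathbf{E}[\mathrm{Tr}[X^{2p}]] \le C_p\, n^{p+1}(1 + o(1)) + (\text{lower-order in }n)$, so $\mathbf{E}[\mathrm{Tr}[X^{2p}]]^{1/2p} \le 4^{1/2}\, n^{1/2}\, n^{1/2p}(1+o(1)) \asymp \sqrt{n}$, uniformly for $p \lesssim \log n$. Plugging $p \asymp \log n$ into Lemma~\ref{lem:mommeth} yields $\mathbf{E}\|X\| \lesssim \sqrt{n}$, and combined with the lower bound this gives $\mathbf{E}\|X\| \asymp \sqrt{n}$ as claimed. (An alternative that avoids the delicate moment bookkeeping entirely is the $\varepsilon$-net argument sketched in Section~\ref{sec:randpr}, which directly gives the same $\sqrt{n}$ upper bound for Wigner matrices; I would mention this as a shortcut but carry out the moment computation since it is the template for the structured bounds to come.)
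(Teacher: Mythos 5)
Your proposal takes a genuinely different route from the paper. The paper proves Lemma~\ref{lem:wigner} by the Chevet--Gordon comparison argument: bound $\mathbf{E}\lambda_{\rm max}(X)=\mathbf{E}\sup_{\|v\|=1}\langle v,Xv\rangle$ using Slepian--Fernique (Lemma~\ref{lem:slepian}) by comparing the process $X_v=\langle v,Xv\rangle$ to the simpler process $Y_v=2\langle v,g\rangle$, obtaining $\mathbf{E}\lambda_{\rm max}(X)\le 2\mathbf{E}\|g\|\le 2\sqrt n$, and then pass from $\lambda_{\rm max}$ to $\|X\|$ using the symmetry $X\stackrel{d}{=}-X$ together with Gaussian concentration (Lemma~\ref{lem:gaussconc}). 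This is deliberately chosen to introduce the comparison technique that drives the rest of the chapter, and it sidesteps walk combinatorics entirely. You instead propose the classical trace-method combinatorics, which the paper only alludes to via the $\varepsilon$-net remark and never carries out for this lemma. Both roads can lead to $\sqrt n$; the paper's is considerably shorter for the lemma at hand.

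That said, your combinatorial sketch contains a concrete error and some material hand-waving. The assertion that the shapes of even closed walks of length $2p$ are ``encoded by a Dyck-like path of length $2p$, of which there are at most $4^p$'' is not correct: only the tree-like walks (every edge traversed exactly twice, $p+1$ distinct vertices, no cycles) correspond to Dyck paths and are counted by the Catalan number $C_p\le 4^p$. Shapes with collapsed vertices, cycles, or edges of higher multiplicity are strictly more numerous, and their count is not bounded by a fixed $C^p$ independently of how many collapses occur. Correspondingly, the claim that the non-tree contributions ``lose at least a factor $n^{-1}$ per collapsed vertex, which comfortably absorbs the $\mathrm{poly}(p)$ combinatorial overcount'' understates the combinatorics: the number of shapes with $j$ collapses grows roughly like $\binom{2p}{2j}\,p^{O(j)}\,C_p$, and the Gaussian moment factor from the higher-multiplicity edges contributes an additional $p^{O(j)}$; the overcount per collapse is therefore not a fixed polynomial in $p$ but a $p^{O(1)}$ factor that must beat the $n^{-1}$ gain. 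The argument does close for $p\asymp\log n$ because $(Cp^a/n)^j$ is then geometrically summable, but this is exactly the nontrivial F\"uredi--Koml\'os-type bookkeeping that your sketch elides, and it is the whole content of the upper bound once the moment method is chosen. Finally, the intermediate claim that $\mathbf{E}[\mathrm{Tr}[X^{2p}]]\le C^p n^{p+1}$ holds ``for all $p$'' is false: since $\|X\|$ concentrates with $O(1)$ fluctuations, $\mathbf{E}[\|X\|^{2p}]^{1/2p}$ grows like $\sqrt p$ once $p\gg n$, so the moment bound can only hold in a restricted range of $p$ (which is of course ample for $p\asymp\log n$). In short: your high-level plan is sound and the target is correct, but as written the combinatorial core would not survive scrutiny without the careful shape-counting lemma; the paper's Slepian--Fernique proof avoids all of this.
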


Thus we see that in the present example it is the \emph{lower} bound in 
Lemma \ref{lem:lowernck} that is sharp, while the upper bound obtained 
from the noncommutative Khintchine inequality fails to capture correctly 
the structure of the problem.

We already sketched a proof of Lemma \ref{lem:wigner} using 
$\varepsilon$-nets in section \ref{sec:randpr}. We take the opportunity 
now to present another proof, due to Chevet \cite{Che78} and Gordon 
\cite{Gor85}, that provides a first illustration of the \emph{comparison 
methods} that will play an important role in the rest of this chapter. To 
this end, we first prove a classical comparison theorem for Gaussian 
processes due to Slepian and Fernique (see, e.g., \cite{BLM13}).

\begin{lemma}[Slepian-Fernique inequality]
\label{lem:slepian}
Let $Y\sim N(0,\Sigma^Y)$ and $Z\sim N(0,\Sigma^Z)$ be centered 
Gaussian vectors in $\mathbb{R}^n$. Suppose that
$$
	\mathbf{E}(Y_i-Y_j)^2 \le 
	\mathbf{E}(Z_i-Z_j)^2\quad\mbox{for all }1\le i,j\le n.
$$
Then
$$
	\mathbf{E}\max_{i\le n}Y_i \le
	\mathbf{E}\max_{i\le n}Z_i.
$$
\end{lemma}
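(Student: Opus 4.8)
The standard route is a smooth interpolation between the two Gaussian vectors, combined with Gaussian integration by parts (Lemma~\ref{lem:gaussmom}'s identity $\mathbf{E}[gf(g)]=\mathbf{E}[f'(g)]$, in its multivariate form). Since the bare statement compares $\mathbf{E}\max_i Y_i$ with $\mathbf{E}\max_i Z_i$ and $\max$ is not smooth, the first step is to replace the maximum by its soft-max approximation $F_\beta(x)=\beta^{-1}\log\sum_{i\le n}e^{\beta x_i}$, which satisfies $\max_i x_i\le F_\beta(x)\le \max_i x_i + \beta^{-1}\log n$ and is smooth with $\partial_i F_\beta(x)=p_i(x):=e^{\beta x_i}/\sum_j e^{\beta x_j}$, so the $p_i$ are nonnegative and sum to $1$. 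It suffices to prove $\mathbf{E}F_\beta(Y)\le\mathbf{E}F_\beta(Z)$ for every $\beta>0$ and then let $\beta\to\infty$.

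Assuming (by enlarging the probability space) that $Y$ and $Z$ are independent, I would interpolate by setting $W(t)=\sqrt{t}\,Z+\sqrt{1-t}\,Y$ for $t\in[0,1]$ and $\psi(t)=\mathbf{E}F_\beta(W(t))$, so that the claim reduces to $\psi(1)\ge\psi(0)$, i.e.\ $\psi'(t)\ge0$. Differentiating and using $\frac{d}{dt}W_i(t)=\frac{1}{2\sqrt{t}}Z_i-\frac{1}{2\sqrt{1-t}}Y_i$ gives
$$
\psi'(t)=\sum_{i\le n}\mathbf{E}\!\left[\partial_i F_\beta(W(t))\left(\tfrac{1}{2\sqrt{t}}Z_i-\tfrac{1}{2\sqrt{1-t}}Y_i\right)\right].
$$
Now apply Gaussian integration by parts in the variables $Z$ and $Y$ separately (valid since $W(t)$ is a linear function of the jointly Gaussian pair $(Y,Z)$): the term with $Z_i$ produces $\frac{1}{2\sqrt t}\sum_j \mathbf{E}[Z_iW_j(t)]\,\mathbf{E}[\partial_i\partial_j F_\beta(W(t))]=\frac12\sum_j\Sigma^Z_{ij}\,\mathbf{E}[\partial_i\partial_jF_\beta(W(t))]$, and likewise the $Y_i$ term contributes $-\frac12\sum_j\Sigma^Y_{ij}\,\mathbf{E}[\partial_i\partial_jF_\beta(W(t))]$. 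Hence
$$
\psi'(t)=\frac12\sum_{i,j\le n}(\Sigma^Z_{ij}-\Sigma^Y_{ij})\,\mathbf{E}[\partial_i\partial_j F_\beta(W(t))].
$$

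The final step is to check the sign. A direct computation gives $\partial_i\partial_j F_\beta(x)=\beta(p_i\delta_{ij}-p_ip_j)$, which is $\beta$ times the (negative of a) covariance-type matrix: for $i\ne j$ one has $\partial_i\partial_jF_\beta=-\beta p_ip_j\le 0$, while the diagonal dominates. The hypothesis $\mathbf{E}(Y_i-Y_j)^2\le\mathbf{E}(Z_i-Z_j)^2$ translates into $\Sigma^Z_{ii}-2\Sigma^Z_{ij}+\Sigma^Z_{jj}\ge\Sigma^Y_{ii}-2\Sigma^Y_{ij}+\Sigma^Y_{jj}$; the cleanest way to exploit this is to rewrite $\sum_{i,j}(\Sigma^Z_{ij}-\Sigma^Y_{ij})(p_i\delta_{ij}-p_ip_j)$ using $\sum_i p_i=1$ as $\frac12\sum_{i,j}p_ip_j\big[(\Sigma^Z_{ii}-2\Sigma^Z_{ij}+\Sigma^Z_{jj})-(\Sigma^Y_{ii}-2\Sigma^Y_{ij}+\Sigma^Y_{jj})\big]$, which is manifestly $\ge0$ term by term. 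Therefore $\psi'(t)\ge0$, giving $\mathbf{E}F_\beta(Y)\le\mathbf{E}F_\beta(Z)$, and letting $\beta\to\infty$ finishes the proof. The one genuine subtlety — the step I expect to require the most care — is the bookkeeping in the integration-by-parts step: one must justify differentiating under the expectation and correctly track that $\mathbf{E}[Z_iW_j(t)]=\sqrt t\,\Sigma^Z_{ij}$ and $\mathbf{E}[Y_iW_j(t)]=\sqrt{1-t}\,\Sigma^Y_{ij}$ using the independence of $Y$ and $Z$, so that the cross terms cancel and only $\Sigma^Z-\Sigma^Y$ survives. The algebraic rearrangement exhibiting the sign is then the conceptual heart, but it is short once the identity $\sum_i p_i=1$ is used.
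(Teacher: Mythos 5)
Your proof is correct and is essentially the same as the one in the paper: both use the interpolation $W(t)=\sqrt{t}Z+\sqrt{1-t}Y$, the soft-max $F_\beta(x)=\beta^{-1}\log\sum_i e^{\beta x_i}$, Gaussian integration by parts to obtain $\psi'(t)=\frac12\sum_{i,j}(\Sigma^Z_{ij}-\Sigma^Y_{ij})\mathbf{E}[\partial_i\partial_jF_\beta(W(t))]$, and the identity $\sum_ip_i=1$ to rewrite this as a nonnegative combination of the increment gaps $\mathbf{E}(Z_i-Z_j)^2-\mathbf{E}(Y_i-Y_j)^2$. Your final rearrangement $\frac12\sum_{i,j}p_ip_j\bigl[\mathbf{E}(Z_i-Z_j)^2-\mathbf{E}(Y_i-Y_j)^2\bigr]$ is exactly the paper's $\frac{\beta}{4}\sum_{i\ne j}\{\mathbf{E}(Z_i-Z_j)^2-\mathbf{E}(Y_i-Y_j)^2\}\mathbf{E}[p_ip_j]$ after tracking the factor of $\beta$ from $\partial_i\partial_jF_\beta$.
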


\begin{proof}
Let $g,g'$ be independent standard Gaussian vectors. We can assume that 
$Y=(\Sigma^Y)^{1/2}g$ and $Z=(\Sigma^Z)^{1/2}g'$.  Let $Y(t) = 
\sqrt{t}Z+\sqrt{1-t}Y$ for $t\in[0,1]$. Then
\begin{align*}
	\frac{d}{dt}\mathbf{E}[f(Y(t))]
	&=
	\frac{1}{2}
	\mathbf{E}\bigg[
	\bigg\langle \nabla f(Y(t)),\frac{Z}{\sqrt{t}}-
	\frac{Y}{\sqrt{1-t}}\bigg\rangle\bigg] \\
	&=
	\frac{1}{2}
	\mathbf{E}\bigg[\frac{1}{\sqrt{t}}
	\bigg\langle (\Sigma^Z)^{1/2}\nabla f(Y(t)),g'\bigg\rangle
	-
	\frac{1}{\sqrt{1-t}}
	\bigg\langle (\Sigma^Y)^{1/2}\nabla f(Y(t)),g\bigg\rangle
	\bigg] \\
	&=
	\frac{1}{2}\sum_{i,j=1}^n
	(\Sigma^Z_{ij}-\Sigma^Y_{ij})\,
	\mathbf{E}\bigg[
	\frac{\partial^2f}{\partial x_i\partial x_j}(Y(t))
	\bigg],
\end{align*}
where we used Gaussian integration by parts in the last step.
We would really like to apply this identity with $f(x) = 
\max_ix_i$: if we can show that $\frac{d}{dt}\mathbf{E}[\max_i Y_i(t)]\ge 
0$, that would imply $\mathbf{E}[\max_iZ_i] = \mathbf{E}[\max_i 
Y_i(1)] \ge \mathbf{E}[\max_i Y_i(0)] = \mathbf{E}[\max_i Y_i]$ as 
desired. The problem is that the function $x\mapsto \max_ix_i$ is not 
sufficiently smooth: it does not possess second derivatives. We
therefore work with a smooth approximation.

Previously, we used $\|x\|_p$ as a smooth approximation of $\|x\|_\infty$.
Unfortunately, it turns out that Slepian-Fernique does \emph{not} hold 
when $\max_i Y_i$ and $\max_i Z_i$ are replaced by $\|Y\|_\infty$ and
$\|Z\|_\infty$, so this cannot work. We must therefore choose instead a 
\emph{one-sided} approximation.  In analogy with Remark \ref{rem:mxconc}, 
we choose
$$
	f_\beta(x) = 
	\frac{1}{\beta}
	\log\Bigg(
	\sum_{i=1}^n e^{\beta x_i}
	\Bigg).
$$
Clearly $\max_ix_i \le f_\beta(x) \le \max_ix_i + \beta^{-1}\log n$, 
so $f_\beta(x)\to\max_i x_i$ as $\beta\to\infty$. Also
$$
	\frac{\partial f_\beta}{\partial x_i}(x) =
	\frac{e^{\beta x_i}}{\sum_j e^{\beta x_j}} =:
	p_i(x),\qquad
	\frac{\partial^2 f_\beta}{\partial x_i\partial x_j}(x) =
	\beta\{\delta_{ij}p_i(x)-p_i(x)p_j(x)\},
$$
where we note that $p_i(x)\ge 0$ and $\sum_i p_i(x)=1$. The reader should 
check that
$$
	\frac{d}{dt}\mathbf{E}[f_\beta(Y(t))] =
	\frac{\beta}{4}\sum_{i\ne j}
	\{\mathbf{E}(Z_i-Z_j)^2-\mathbf{E}(Y_i-Y_j)^2\}\,
	\mathbf{E}[p_i(Y(t))p_j(Y(t))],
$$
which follows by rearranging the terms in the above expressions.
The right-hand side is nonnegative by assumption, and thus the proof
is easily completed.
\qed
\end{proof}

We can now prove Lemma \ref{lem:wigner}.

\begin{proof}[Proof of Lemma \ref{lem:wigner}]
That $\mathbf{E}\|X\|\gtrsim\sqrt{n}$ follows from Lemma 
\ref{lem:lowernck}, so it remains to prove 
$\mathbf{E}\|X\|\lesssim\sqrt{n}$. To this end, define $X_v := \langle 
v,Xv\rangle$ and $Y_v = 2\langle v,g\rangle$, where $g$ is a standard 
Gaussian vector. Then we can estimate
$$
	\mathbf{E}(X_v-X_w)^2 \le
	2\sum_{i,j=1}^n (v_iv_j-w_iw_j)^2
	\le 4 \|v-w\|^2 = \mathbf{E}(Y_v-Y_w)^2
$$
when $\|v\|=\|w\|=1$, where we used $1-\langle v,w\rangle^2 \le
2(1-\langle v,w\rangle)$ when $|\langle v,w\rangle| \le 1$.
It follows form the Slepian-Fernique lemma that we have
$$
	\mathbf{E}\lambda_{\rm max}(X) =
	\mathbf{E}\sup_{\|v\|=1}\langle v,Xv\rangle \le
	2\,\mathbf{E}\sup_{\|v\|=1}\langle v,g\rangle =
	2\,\mathbf{E}\|g\| \le 2\sqrt{n}.
$$
But as $X$ and $-X$ have the same distribution, so do the random 
variables $\lambda_{\rm max}(X)$ and $-\lambda_{\rm min}(X)=
\lambda_{\rm max}(-X)$. We can therefore estimate
$$
	\mathbf{E}\|X\| =
	\mathbf{E}(\lambda_{\rm max}(X)\vee{-\lambda_{\rm min}(X)}) 
	\le
	\mathbf{E}\lambda_{\rm max}(X)
	+ 2\,\mathbf{E}|\lambda_{\rm max}(X)-
	\mathbf{E}\lambda_{\rm max}(X)|
	= 2\sqrt{n}+O(1),
$$
where we used that $\mathrm{Var}(\lambda_{\rm max}(X))=O(1)$ by
Lemma \ref{lem:gaussconc}.
\qed\end{proof}
\end{example}

We have seen above two extreme examples: diagonal matrices and Wigner 
matrices. In the diagonal case, the noncommutative Khintchine inequality 
is sharp, while the lower bound in Lemma \ref{lem:lowernck} is suboptimal.  
On the other hand, for Wigner matrices, the noncommutative Khintchine 
inequality is suboptimal, while the lower bound in Lemma 
\ref{lem:lowernck} is sharp.  We therefore see that while the structural 
parameter $\sigma=\|\mathbf{E}X^2\|^{1/2}$ that appears in the 
noncommutative Khintchine inequality always crudely controls the spectral 
norm up to a logarithmic factor in the dimension, it fails to capture 
correctly the structure of the problem and cannot in general yield sharp 
bounds. The aim of the rest of this chapter is to develop a deeper 
understanding of the norms of structured random matrices that goes beyond 
Lemma \ref{lem:lowernck}.

\subsection{A second-order Khintchine inequality}

Having established that the noncommutative Khintchine inequality falls 
short of capturing the full structure of our random matrix model, we 
naturally aim to understand where things went wrong. The culprit is easy 
to identify. The main idea behind the proof of the noncommutative 
Khintchine inequality is that the case where the matrices $A_k$ commute is 
the worst possible, as is made precise by Lemma \ref{lem:exchg}. However, 
when the matrices $A_k$ do not commute, the behavior of the spectral norm 
can be \emph{strictly better} than is predicted by the noncommutative 
Khintchine inequality. The crucial shortcoming of the noncommutative 
Khintchine inequality is that it provides no mechanism to capture the 
effect of noncommutativity. 

\begin{remark}
This intuition is clearly visible in the examples of the previous section: 
the diagonal example corrsponds to choosing coefficient matrices $A_k$ of 
the form $e_ie_i^*$ for $1\le i\le n$, while to obtain a Wigner matrix we 
add additional coefficient matrices $A_k$ of the form $e_ie_j^*+e_je_i^*$ 
for $1\le i<j\le n$ (here $e_1,\ldots,e_n$ denotes the standard basis in 
$\mathbb{R}^n$).  Clearly the matrices $A_k$ commute in the diagonal 
example, in which case noncommutative Khintchine is sharp, but they do not 
commute for the Wigner matrix, in which case noncommutative Khintchine is 
suboptimal. 
\end{remark}

The present insight suggests that a good bound on the spectral norm of 
random matrices of the form $X=\sum_{k=1}^sg_kA_k$ should somehow take 
into account the algebraic structure of the coefficient matrices $A_k$. 
Unfortunately, it is not at all clear how this is to be accomplished. In 
this section we develop an interesting result in this spirit due to Tropp 
\cite{Tro15b}.  While this result is still very far from being sharp, the 
proof contains some interesting ideas, and provides at present the only 
known approach to improve on the noncommutative Khintchine inequality in 
the most general setting.

The intuition behind the result of Tropp is that the commutation inequality
$$
	\mathbf{E}[\mathrm{Tr}[A_kX^\ell A_k X^{2p-2-\ell}]] \le
	\mathbf{E}[\mathrm{Tr}[A_k^2X^{2p-2}]]
$$
of Lemma \ref{lem:exchg}, which captures the idea that the commutative
case is the worst case, should incur significant loss when the matrices
$A_k$ do not commute. Therefore, rather than apply this inequality 
directly, we should try to go to second order by integrating again by 
parts. For example, for the term $\ell=1$, we could write
\begin{align*}
	\mathbf{E}[\mathrm{Tr}[A_kXA_k X^{2p-3}]] &=
	\sum_{l=1}^s \mathbf{E}[g_l\mathrm{Tr}[A_kA_l A_k X^{2p-3}]]
	\\ &=
	\sum_{l=1}^s
	\sum_{m=0}^{2p-4}
	\mathbf{E}[\mathrm{Tr}[A_kA_l A_k X^m A_l X^{2p-4-m}]]. 
\end{align*}
If we could again permute the order of $A_l$ and $X^m$ on the right-hand 
side, we would obtain control of these terms not by the structural parameter 
$$
	\sigma=\Bigg\|\sum_{k=1}^sA_k^2\Bigg\|^{1/2}
$$
that appears in the noncommutative 
Khintchine inequality, but rather by the second-order ``noncommutative'' 
structural parameter
$$
	\Bigg\|\sum_{k,l=1}^s A_kA_lA_kA_l\Bigg\|^{1/4}.
$$
Of course, when the matrices $A_k$ commute, the latter parameter is equal 
to $\sigma$ and we recover the noncommutative Khintchine inequality; but 
when the matrices $A_k$ do not commute, it can be the case that this 
parameter is much smaller than $\sigma$.  This back-of-the-envelope
computation suggests that we might indeed hope to capture noncommutativity 
to some extent through the present approach.

In essence, this is precisely how we will proceed. However, there is a 
technical issue: the convexity that was exploited in the proof of Lemma 
\ref{lem:exchg} is no longer present in the second-order terms. We 
therefore cannot naively exchange $A_l$ and $X^m$ as suggested above, and 
the parameter $\|\sum_{k,l=1}^s A_kA_lA_kA_l\|^{1/4}$ is in fact too small 
to yield any meaningful bound (as is illustrated by a counterexample in 
\cite{Tro15b}).  The key idea in \cite{Tro15b} is that a classical complex 
analysis argument \cite[Appendix IX.4]{RS75} can be exploited to force 
convexity, at the expense of a larger second-order term.

\begin{theorem}[Tropp]
\label{thm:tropp}
Let $X=\sum_{k=1}^s g_kA_k$ as in the previous section. Define
$$
	\sigma := \Bigg\|\sum_{k=1}^sA_k^2\Bigg\|^{1/2},\qquad
	\tilde\sigma :=
	\sup_{U_1,U_2,U_3}
	\Bigg\|\sum_{k,l=1}^s A_k U_1A_l U_2A_k U_3 A_l\Bigg\|^{1/4},
$$
where the supremum is taken over all triples $U_1,U_2,U_3$ of commuting
unitary matrices.\footnote{For reasons that will become evident in
	the proof, it is essential to consider (complex) unitary matrices 
	$U_1,U_2,U_3$, despite that all the matrices $A_k$ and $X$ are 
	assumed to be real.
}
Then we have a second-order noncommutative Khintchine inequality
$$
	\mathbf{E}\|X\| \lesssim \sigma \, \log^{1/4} n + 
	\tilde\sigma \, \log^{1/2} n.
$$
\end{theorem}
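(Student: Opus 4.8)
The plan is to follow the moment-method strategy outlined before the theorem: apply Lemma~\ref{lem:mommeth} to reduce to bounding $\mathbf{E}[\mathrm{Tr}[X^{2p}]]^{1/2p}$ for $p\asymp\log n$, then iterate the Gaussian integration-by-parts identity \emph{twice} rather than once. After the first integration by parts we are left with terms of the form $\mathbf{E}[\mathrm{Tr}[A_kX^\ell A_k X^{2p-2-\ell}]]$. For those with $\ell=0$ or $\ell=2p-2$ we are already in the ``commutative'' situation and these contribute the $\sigma$-term exactly as in the proof of Theorem~\ref{thm:nck}. For the remaining ``interior'' terms ($1\le\ell\le 2p-3$), instead of applying Lemma~\ref{lem:exchg}, we integrate by parts a second time against the Gaussian variables hidden in one of the copies of $X$, producing terms of the form $\mathbf{E}[\mathrm{Tr}[A_kA_lA_kX^mA_lX^{m'}\cdots]]$ with four explicit $A$'s. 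The goal is to bound the sum of these by $\tilde\sigma^4\,\mathbf{E}[\mathrm{Tr}[X^{2p-4}]]$, which after rearranging and using H\"older (as in the proof of Theorem~\ref{thm:nck}) yields $\mathbf{E}[\mathrm{Tr}[X^{2p}]]^{1/2p}\lesssim \sqrt{p}\,\sigma + p^{3/4}\tilde\sigma$ roughly — wait, one must be careful about the powers of $p$ — the correct bookkeeping should give $\mathbf{E}[\mathrm{Tr}[X^{2p}]]^{1/2p}\lesssim p^{1/4}\sigma + p^{1/2}\tilde\sigma$ after accounting for the number of interior terms, and then setting $p\asymp\log n$ gives the stated bound.

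The main obstacle, explicitly flagged in the text, is that after the second integration by parts the bound to be estimated, $\mathrm{Tr}[A_kA_lA_kX^mA_lX^{2p-4-m}]$, is \emph{not} a convex function of $m$, so the simple interpolation trick of Lemma~\ref{lem:exchg} (pushing the extremum to the endpoints) is unavailable. The key device, borrowed from \cite[Appendix IX.4]{RS75} via \cite{Tro15b}, is to ``force convexity'' by a complex-analytic argument: one introduces the spectral decomposition $X=\sum_i\lambda_iv_iv_i^*$, writes the trace expression as a function of the $\lambda_i$'s through diagonal unitaries $U_j=\mathrm{diag}(e^{i\theta_j\log|\lambda|})$-type factors so that the exponents $|\lambda_i|^m$ become boundary values of a function analytic in a strip, and then applies the Hadamard three-lines theorem (a maximum principle for $\mathrm{Tr}[A_kU_1A_lU_2A_kU_3A_l]$ over the relevant unitaries) to dominate the interior term by a supremum over such unitary conjugations. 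This is exactly why the definition of $\tilde\sigma$ ranges over triples of \emph{commuting unitaries} and why complex unitaries are genuinely needed even though $X$ is real. I would isolate this step as a self-contained lemma: \emph{for all $0\le m\le N$, $\mathrm{Tr}[BX^mCX^{N-m}]\le \sup_{U}\mathrm{Tr}[BUCU^*X^N]$-type statement with $U$ unitary commuting with $X$}, proved by the three-lines theorem.

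Granting that lemma, the rest is assembling the pieces. I would (i) state and prove the convexity-forcing lemma via Hadamard three lines, carefully tracking that the exponents $\ell$, $m$ land in the correct closed interval so the maximum principle applies on the boundary; (ii) perform the double integration by parts, keeping the endpoint terms separate (they give $\sigma$) from the interior terms (they give $\tilde\sigma$); (iii) apply the lemma to each interior term to replace $X^mA_lX^{m'}$-type insertions by $\mathrm{Tr}[(\text{four-}A\text{ expression with unitaries})\,X^{2p-4}]$, then bound the four-$A$ expression in operator norm by $\tilde\sigma^4$ by definition, and collect $\mathbf{E}[\mathrm{Tr}[X^{2p-4}]]$; (iv) combine the two contributions, count that there are $O(p)$ endpoint-type terms and $O(p^2)$ interior terms so that the $\sigma$ contribution scales like $p$ and the $\tilde\sigma$ contribution like $p^2$ before taking $2p$-th roots, giving $\mathbf{E}[\mathrm{Tr}[X^{2p}]]^{1/2p}\lesssim \sqrt{p}\,\sigma+\ldots$; (v) solve the resulting quadratic-type recursion in $\mathbf{E}[\mathrm{Tr}[X^{2p}]]^{1/2p}$ — this is where the two separate powers $\log^{1/4}n$ and $\log^{1/2}n$ emerge, one from the first-order $\sigma$ term and one from the genuinely second-order $\tilde\sigma$ term; and (vi) invoke Lemma~\ref{lem:mommeth} with $p\asymp\log n$ to pass from the trace moment to $\mathbf{E}\|X\|$. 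I expect step (i) — getting the Hadamard three-lines argument to apply cleanly to a trace of a product of noncommuting matrices, with the right analytic structure in the strip — to be the genuinely delicate part; everything else is bookkeeping of the type already carried out in the proof of Theorem~\ref{thm:nck}.
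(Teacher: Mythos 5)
Your plan---double Gaussian integration by parts, a Hadamard three-line argument to force convexity via unitaries built from the polar decomposition of $X$, a quadratic recursion in $\mathbf{E}[\mathrm{Tr}[X^{2p}]]^{1/p}$ solved at $p\asymp\log n$---is exactly the strategy the paper follows, and you correctly identify both the role of the commuting (necessarily complex) unitaries in $\tilde\sigma$ and the three-line step as the delicate part. The one bookkeeping device you omit is that for $2\le\ell\le 2p-4$ the paper first applies the convexity estimate of Lemma~\ref{lem:exchg} to reduce to $\ell=2$ \emph{before} the second integration by parts; this confines the three-line argument to a two-variable application (Lemma~\ref{lem:exchg2}) rather than one with insertions scattered over both $X$-blocks, and also produces a $\sigma^4$-type remainder (Lemma~\ref{lem:exchg3}) that must be folded into the first term of the final quadratic inequality.
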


Due to the (necessary) presence of the unitaries, the second-order 
parameter $\tilde\sigma$ is not so easy to compute.  It is verified in 
\cite{Tro15b} that $\tilde\sigma\le\sigma$ (so that Theorem 
\ref{thm:tropp} is no worse than the noncommutative Khintchine 
inequality), and that $\tilde\sigma=\sigma$ when the matrices $A_k$ 
commmute. On the other hand, an explicit computation in \cite{Tro15b} 
shows that if $X$ is a Wigner matrix as in Example \ref{ex:wigner}, we 
have $\sigma\asymp\sqrt{n}$ and $\tilde\sigma\asymp n^{1/4}$.  Thus 
Theorem \ref{thm:tropp} yields in this case $\mathbf{E}\|X\|\lesssim 
\sqrt{n}(\log n)^{1/4}$, which is strictly better than the noncommutative 
Khintchine bound $\mathbf{E}\|X\|\lesssim\sqrt{n}(\log n)^{1/2}$ but falls 
short of the sharp bound $\mathbf{E}\|X\|\asymp\sqrt{n}$.  We therefore 
see that Theorem \ref{thm:tropp} does indeed improve, albeit ever so 
slightly, on the noncommutative Khintchine bound.  The real interest of 
Theorem \ref{thm:tropp} is however the very general setting in which it 
holds, and that it does capture explicitly the noncommutativity of the 
coefficient matrices $A_k$.  In section \ref{sec:indep}, we will see that 
much sharper bounds can be obtained if we specialize to random matrices 
with independent entries. While this is perhaps the most interesting 
setting in practice, it will require us to depart from the much more 
general setting provided by the Khintchine-type inequalities that we have 
seen so far.

The remainder of this section is devoted to the proof of Theorem 
\ref{thm:tropp}. The proof follows essentially along the lines already 
indicated: we follow the proof of the noncommutative Khintchine inequality 
and integrate by parts a second time. The new idea in the proof is to 
understand how to appropriately extend Lemma \ref{lem:exchg}.

\begin{proof}[Proof of Theorem \ref{thm:tropp}]
We begin as in the proof of Theorem \ref{thm:nck} by writing
$$
	\mathbf{E}[\mathrm{Tr}[X^{2p}]] =
	\sum_{\ell=0}^{2p-2}
	\sum_{k=1}^s
	\mathbf{E}[\mathrm{Tr}[A_kX^\ell A_k X^{2p-2-\ell}]].
$$
Let us investigate each of the terms inside the first sum.

\textbf{Case $\ell=0,2p-2$.} In this case there is little to do: we
can estimate
$$
	\sum_{k=1}^s
        \mathbf{E}[\mathrm{Tr}[A_k^2 X^{2p-2}]]
	\le
	\mathrm{Tr}\Bigg[
        \Bigg(\sum_{k=1}^s A_k^2\Bigg)^p\Bigg]^{1/p}
	\mathbf{E}[\mathrm{Tr}[X^{2p}]]^{1-1/p}	
$$
precisely as in the proof of Theorem \ref{thm:nck}.

\textbf{Case $\ell=1,2p-3$.}
This is the first point at which something interesting happens.
Integrating by parts a second time as was discussed before
Theorem \ref{thm:tropp}, we obtain
$$
	\sum_{k=1}^s
	\mathbf{E}[\mathrm{Tr}[A_kX A_k X^{2p-3}]] 
	= \sum_{m=0}^{2p-4}
	\sum_{k,l=1}^s
	\mathbf{E}[\mathrm{Tr}[A_kA_lA_k X^m A_l X^{2p-4-m}]].
$$
The challenge we now face is to prove the appropriate analogue
of Lemma \ref{lem:exchg}.

\begin{lemma}
\label{lem:exchg1}
There exist unitary matrices $U_1,U_2$ (dependent on $X$ and $m$) such 
that
$$
	\sum_{k,l=1}^s\mathrm{Tr}[A_kA_lA_k X^m A_l X^{2p-4-m}] \le
	\Bigg|
	\sum_{k,l=1}^s\mathrm{Tr}[A_kA_lA_k U_1 A_l U_2 X^{2p-4}]
	\Bigg|.
$$
\end{lemma}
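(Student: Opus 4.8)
\medskip
\noindent\textbf{Proof sketch.}
My plan is to deduce the inequality from the Hadamard three-lines lemma by interpolating in the exponents of $X$. Along a suitable complex-analytic family $F(z)$ on the strip $\{0\le\mathrm{Re}\,z\le 1\}$, the interior point $z=m/(2p-4)$ will reproduce the left-hand side with its ``split'' powers $X^m,X^{2p-4-m}$, while on the two boundary lines the powers will coalesce into $X^{2p-4}$ at the cost of inserting unitary ``phase'' factors $|X|^{\pm it}$; these phases will be the matrices $U_1,U_2$. This is precisely the ``force convexity'' device alluded to just before the theorem (the classical argument of \cite[Appendix IX.4]{RS75}): the direct convexity-in-$\ell$ step used in Lemma \ref{lem:exchg} is no longer available at second order, and complex interpolation supplies a workable substitute.

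Concretely, write $N:=2p-4$. I would first dispose of the trivial cases $m\in\{0,N\}$, in which the asserted inequality already holds with equality for $U_1=U_2=I$ (for $m=N$, after a cyclic permutation of the trace and the relabeling $k\leftrightarrow l$). For $1\le m\le N-1$ both $X^m$ and $X^{N-m}$ annihilate $\ker X$, so I may assume $X$ invertible (otherwise restrict $X$ to $(\ker X)^\perp$ throughout). Writing $X=\sum_i\lambda_iv_iv_i^*$, set $|X|:=\sum_i|\lambda_i|v_iv_i^*$ and $\mathrm{sign}(X):=\sum_i\mathrm{sign}(\lambda_i)v_iv_i^*$, so $\mathrm{sign}(X)$ is a self-adjoint unitary, $|X|$ is positive definite, the two commute, and $X^j=\mathrm{sign}(X)^j|X|^j$ for every $j\ge 0$. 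Then define
$$
	F(z):=\sum_{k,l=1}^s\mathrm{Tr}\big[A_kA_lA_k\,\mathrm{sign}(X)^m|X|^{Nz}\,A_l\,\mathrm{sign}(X)^{N-m}|X|^{N(1-z)}\big],
$$
which extends to an entire function of $z$ (each $|\lambda_i|^{Nz}=e^{Nz\log|\lambda_i|}$ is entire) and is bounded on the strip, since $\||X|^{Nz}\|=\max_i|\lambda_i|^{N\mathrm{Re}\,z}$.

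I would then run three short computations. (i) At $z=m/N$, using $\mathrm{sign}(X)^j|X|^j=X^j$, $F(m/N)=\sum_{k,l}\mathrm{Tr}[A_kA_lA_kX^mA_lX^{N-m}]$, the left-hand side of the lemma; it is a real number, being a trace of real matrices. (ii) On the line $\mathrm{Re}\,z=0$, writing $z=it$ and using that $N$ is even (so $|X|^N=X^N$) and that all functions of $X$ commute, $F(it)=\sum_{k,l}\mathrm{Tr}[A_kA_lA_k\,U_1(t)\,A_l\,U_2(t)\,X^N]$, where $U_1(t):=\mathrm{sign}(X)^m|X|^{iNt}$ and $U_2(t):=\mathrm{sign}(X)^{N-m}|X|^{-iNt}$ are (complex) unitary. (iii) On the line $\mathrm{Re}\,z=1$ the same manipulation gives $F(1+it)=\sum_{k,l}\mathrm{Tr}[A_kA_lA_kX^NU_1(t)A_lU_2(t)]$; taking complex conjugates and using that the $A_k$ and $X$ are self-adjoint, $\overline{F(1+it)}=\sum_{k,l}\mathrm{Tr}[A_kA_lA_k\,U_2(t)^*\,A_l\,U_1(t)^*\,X^N]$. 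Set $S:=\sup\{\,|\sum_{k,l=1}^s\mathrm{Tr}[A_kA_lA_kV_1A_lV_2X^N]|:V_1,V_2\mbox{ unitary}\,\}$; this is finite and, by compactness of the unitary group and continuity, is attained at some pair $(U_1,U_2)$ (depending on $X$ and $m$). By (ii) and (iii), $|F(it)|\le S$ and $|F(1+it)|\le S$ for all $t\in\mathbb{R}$, so the Hadamard three-lines lemma together with (i) yields
$$
	F(m/N)\ \le\ |F(m/N)|\ \le\ \max\big(\sup_t|F(it)|,\ \sup_t|F(1+it)|\big)\ \le\ S.
$$
Since $F(m/N)$ is the left-hand side of the lemma and $S=|\sum_{k,l}\mathrm{Tr}[A_kA_lA_kU_1A_lU_2X^N]|$, this is exactly the claimed inequality.

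I expect the only genuinely nonroutine step to be discovering the family $F$ — realising that merging the two powers of $X$ forces one to slide them past the central matrix $A_l$ through the functional calculus, which unavoidably produces the complex phases $|X|^{\pm iNt}$ on the boundary of the strip. Everything downstream of that choice (the reduction to invertible $X$, the evenness of $2p-4$ turning $|X|^{2p-4}$ into $X^{2p-4}$, the adjoint manipulation identifying the $\mathrm{Re}\,z=1$ boundary value with the required form, and the compactness upgrading the three-lines bound to an explicit maximising pair $(U_1,U_2)$) is routine bookkeeping, and I do not foresee any further obstacle.
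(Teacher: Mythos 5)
Your proof follows essentially the same route as the paper's: you construct the same analytic family $F(z)$, using $\mathrm{sign}(X)^m|X|^{Nz}$ and $\mathrm{sign}(X)^{N-m}|X|^{N(1-z)}$ to interpolate between the split powers of $X$, and invoke the Hadamard three-line lemma on the strip, reading off the unitaries $U_1,U_2$ from the boundary values. The one place you diverge — bounding $|F(it)|$ and $|F(1+it)|$ by the supremum $S$ over \emph{all} unitary pairs and then using compactness of the unitary group to realise $S$ at a concrete pair $(U_1,U_2)$ — is a slightly cleaner way to extract the explicit unitaries than the paper's appeal to periodicity of $s\mapsto\varphi(t+is)$ (which in general is only almost periodic, since the $\log|\lambda_i|$ need not be commensurate); your compactness step sidesteps that small imprecision while reaching the identical conclusion.
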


\begin{remark}
Let us start the proof as in Lemma \ref{lem:exchg} and see where things
go wrong.  In terms of the eigendecomposition 
$X=\sum_{i=1}^n\lambda_iv_iv_i^*$, we can write
$$
	\sum_{k,l=1}^s\mathrm{Tr}[A_kA_lA_k X^m A_l X^{2p-4-m}] =
	\sum_{k,l=1}^s\sum_{i,j=1}^n \lambda_i^m\lambda_j^{2p-4-m}
	\langle v_j,A_kA_lA_k v_i\rangle \langle v_i,A_lv_j\rangle.
$$
Unfortunately, unlike in the analogous expression in the proof of
Lemma \ref{lem:exchg}, the coefficients 
$\langle v_j,A_kA_lA_k v_i\rangle \langle v_i,A_lv_j\rangle$ can have
arbitrary sign. Therefore, we cannot easily force convexity of the above
expression as a function of $m$ as we did in Lemma \ref{lem:exchg}: if
we replace the terms in the sum by their absolute values, we will no
longer be able to interpret the resulting expression as a linear
algebraic object (a trace).

However, the above expression is still an
an \emph{analytic} function in the complex plane $\mathbb{C}$.
The idea that we will exploit is that analytic functions have
some hidden convexity built in, as we recall here without proof (cf.\ 
\cite[p.\,33]{RS75}).
\end{remark}

\begin{lemma}[Hadamard three line lemma]
If $\varphi:\mathbb{C}\to\mathbb{C}$ is analytic, the function
$t\mapsto\sup_{s\in\mathbb{R}}\log|\varphi(t+is)|$ is convex on the real 
line (provided it is finite).
\end{lemma}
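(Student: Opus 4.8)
The plan is to prove the equivalent three-lines inequality on each bounded vertical strip and observe that this \emph{is} the definition of convexity. Write $M(t):=\sup_{s\in\mathbb{R}}\log|\varphi(t+is)|$, a finite real-valued function by hypothesis. Convexity of $M$ on $\mathbb{R}$ is precisely the assertion that for every pair of reals $a<b$ and every $t\in[a,b]$,
$$
	M(t)\ \le\ \frac{b-t}{b-a}\,M(a)+\frac{t-a}{b-a}\,M(b),
$$
i.e.\ every chord of the graph lies above the graph, so it suffices to fix $a<b$ and prove this. An affine substitution $z\mapsto a+(b-a)z$ (which maps entire functions to entire functions) reduces matters to the strip $S:=\{z\in\mathbb{C}:0\le\mathrm{Re}\,z\le1\}$, where the claim becomes $M(t)\le(1-t)M(0)+tM(1)$ for $t\in[0,1]$, equivalently $|\varphi(z)|\le\mu_0^{\,1-\mathrm{Re}\,z}\mu_1^{\,\mathrm{Re}\,z}$ on $S$, with $\mu_j:=e^{M(j)}=\sup_s|\varphi(j+is)|$. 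Here $\mu_0,\mu_1>0$: if, say, $\mu_0=0$ then $\varphi$ vanishes on the imaginary axis, hence $\varphi\equiv0$ by the identity theorem, contradicting finiteness of $M$.

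Having dealt with the degenerate case, I would introduce the auxiliary entire function
$$
	h(z)\ :=\ \mu_0^{\,z-1}\,\mu_1^{\,-z}\,\varphi(z),
$$
where $\mu_j^{\,\alpha}:=e^{\alpha\log\mu_j}$ uses the ordinary real logarithm, so that there is no branch ambiguity and $h$ is genuinely entire. Since $|\mu_j^{\,w}|=\mu_j^{\,\mathrm{Re}\,w}$, a one-line computation shows $|h|\le1$ on each boundary line $\mathrm{Re}\,z=0$ and $\mathrm{Re}\,z=1$. The whole problem now reduces to the maximum-principle statement $|h|\le1$ on all of $S$, which gives $|\varphi(z)|\le\mu_0^{\,1-\mathrm{Re}\,z}\mu_1^{\,\mathrm{Re}\,z}$ and hence the desired inequality for $M$.

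The only real content — and the step I expect to require the most care — is that $S$ is \emph{unbounded}, so the maximum modulus principle cannot be invoked directly; one needs a Phragm\'en--Lindel\"of-type argument, which in turn requires that $\varphi$ (hence $h$) be bounded on the closed strip $S$, not merely on each vertical line. I would carry this boundedness as a standing hypothesis; it is automatic in the intended application to Lemma \ref{lem:exchg1}, where $\varphi$ is a finite sum of terms $c\,|\lambda_i|^{z}|\lambda_j|^{2p-4-z}$ and $\big||\lambda|^{z}\big|=|\lambda|^{\mathrm{Re}\,z}$ is bounded uniformly in $\mathrm{Im}\,z$ on $S$. Granting this, fix $\varepsilon>0$ and set $h_\varepsilon(z):=h(z)\,e^{\varepsilon(z-\frac12)^2}$. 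Because $\mathrm{Re}\big((z-\tfrac12)^2\big)=(\mathrm{Re}\,z-\tfrac12)^2-(\mathrm{Im}\,z)^2$, the factor $\big|e^{\varepsilon(z-\frac12)^2}\big|$ is at most $e^{\varepsilon/4}$ on $S$ and tends to $0$ as $|\mathrm{Im}\,z|\to\infty$, uniformly in $\mathrm{Re}\,z\in[0,1]$. Hence $|h_\varepsilon|\le e^{\varepsilon/4}$ on the two boundary lines, while on the horizontal sides $\mathrm{Im}\,z=\pm T$ of the box $R_T:=\{0\le\mathrm{Re}\,z\le1,\ |\mathrm{Im}\,z|\le T\}$ we have $|h_\varepsilon|\le(\sup_S|h|)\,e^{\varepsilon/4}e^{-\varepsilon T^2}\le e^{\varepsilon/4}$ once $T$ is large. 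The ordinary maximum modulus principle on the bounded box $R_T$, followed by $T\to\infty$, gives $|h_\varepsilon|\le e^{\varepsilon/4}$ throughout $S$; for fixed $z$ this means $|h(z)|\le e^{\varepsilon/4}e^{\varepsilon(\mathrm{Im}\,z)^2}$, and letting $\varepsilon\downarrow0$ yields $|h(z)|\le1$. This finishes the proof; everything outside the Phragm\'en--Lindel\"of regularization (the reduction to $[0,1]$, the construction of $h$, the degenerate case) is routine bookkeeping.
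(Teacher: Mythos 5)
Your proof is correct, and it is the standard textbook argument: normalize to the unit strip, form the auxiliary entire function $h(z)=\mu_0^{\,z-1}\mu_1^{\,-z}\varphi(z)$ so that $|h|\le 1$ on the two boundary lines, and run the Phragm\'en--Lindel\"of regularization $h_\varepsilon(z)=h(z)e^{\varepsilon(z-\frac12)^2}$ on the bounded boxes $R_T$ before letting $T\to\infty$ and $\varepsilon\downarrow 0$. The paper does not actually prove this lemma --- it is explicitly stated ``without proof'' with a citation to Reed--Simon --- so there is no in-paper argument to compare against; what you wrote is precisely the argument that reference (and most complex-analysis texts) give.

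The one substantive thing you add, and it is a legitimate observation, is the boundedness-on-the-strip hypothesis needed to make Phragm\'en--Lindel\"of go through. The lemma as stated in the paper, with only ``analytic'' and ``the vertical suprema are finite,'' suppresses this; your decision to carry it as a standing hypothesis is the standard rigorous formulation, and your remark that it holds automatically in the intended application (Lemma \ref{lem:exchg1}) is correct and is the right way to reconcile the loose statement with its use: there $\varphi$ is built from $|X|^{(2p-4)z}$ sandwiched between fixed matrices, and since $\bigl\||X|^{(2p-4)z}\bigr\|$ depends only on $\operatorname{Re}z$, the function $\varphi$ is bounded uniformly on the closed strip $0\le\operatorname{Re}z\le 1$. (The paper instead asserts that $\varphi(t+is)$ is \emph{periodic} in $s$, which is stronger than what is actually true --- the eigenvalues of $\log|X|$ need not be commensurable --- but uniform boundedness, which is all that is needed both for the three-line lemma and for the paper's subsequent claim that the supremum over $s$ is attained up to a compactness/limiting argument in the unitary group, does hold for the reason you give.) In short: correct proof, same route as the cited source, and a correct flag of the implicit regularity hypothesis.
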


\begin{proof}[Proof of Lemma \ref{lem:exchg1}]
We can assume that $X$
is nonsingular; otherwise we may replace $X$ by $X+\varepsilon$ and
let $\varepsilon\downarrow 0$ at the end of the proof.
Write $X=V|X|$ according
to its polar decomposition, and note that as $X$ is self-adjoint, 
$V=\mathrm{sign}(X)$ commutes with $X$ and therefore
$X^m=V^m|X|^m$. Define
$$
	\varphi(z) := 
	\sum_{k,l=1}^s\mathrm{Tr}[A_kA_lA_k V^m|X|^{(2p-4)z} A_l 
	V^{2p-4-m}
	|X|^{(2p-4)(1-z)}].
$$
As $X$ is nonsingular, $\varphi$ is analytic and $\varphi(t+is)$ 
is a periodic function of $s$ for every $t$. By the three line 
lemma, $\sup_{s\in\mathbb{R}}|\varphi(t+is)|$ attains its maximum for
$t\in[0,1]$ at either $t=0$ or $t=1$.  Moreover, the supremum itself is
attained at some $s\in\mathbb{R}$ by periodicity. We have therefore shown
that there exists $s\in\mathbb{R}$ such that
$$
	\Bigg|
	\sum_{k,l=1}^s\mathrm{Tr}[A_kA_lA_k X^m A_l X^{2p-4-m}]
	\Bigg|
	=
	\bigg|\varphi\bigg(\frac{m}{2p-4}\bigg)\bigg| \le
	|\varphi(is)|\vee|\varphi(1+is)|.
$$
But, for example,
$$
	|\varphi(is)| =
	\Bigg|
	\sum_{k,l=1}^s\mathrm{Tr}[A_kA_lA_k V^m|X|^{is(2p-4)} A_l 
	V^{2p-4-m}|X|^{-is(2p-4)}
	X^{2p-4}]\Bigg|,
$$
so if this term is the larger we can set $U_1=V^m|X|^{is(2p-4)}$
and $U_2=V^{2p-4-m}|X|^{-is(2p-4)}$ to obtain the statement of the
lemma (clearly $U_1$ and $U_2$ are unitary). 
If the term $|\varphi(1+is)|$ is larger, the claim follows 
in precisely the identical manner.
\qed\end{proof}

Putting together the above bounds, we obtain
\begin{align*}
	&\sum_{k=1}^s
	\mathbf{E}[\mathrm{Tr}[A_kX A_k X^{2p-3}]] 
	\\ &\le 
	(2p-3)\,
	\mathbf{E}\Bigg[
	\sup_{U_1,U_2}
	\Bigg|
	\sum_{k,l=1}^s\mathrm{Tr}[A_kA_lA_k U_1 A_l U_2 X^{2p-4}]
	\Bigg|\Bigg] \\
	&\le 
	(2p-3)
	\sup_{U}
	\mathrm{Tr}\Bigg[\Bigg|
	\sum_{k,l=1}^sA_kA_lA_k U A_l
	\Bigg|^{p/2}\Bigg]^{2/p}
	\mathbf{E}[\mathrm{Tr}[X^{2p}]]^{1-2/p}.
\end{align*}
This term will evidently yield a term of order $\tilde\sigma$
when $p\sim\log n$.

\textbf{Case $2\le\ell\le 2p-4$.} These terms are dealt with much in the
same way as in the previous case, except the computation is a bit more 
tedious. As we have come this far, we might as well complete the argument.
We begin by noting that
$$
	\sum_{k=1}^s
	\mathbf{E}[\mathrm{Tr}[A_kX^\ell A_k X^{2p-2-\ell}]]
	\le
	\sum_{k=1}^s
	\mathbf{E}[\mathrm{Tr}[A_kX^2 A_k X^{2p-4}]]
$$
for every $2\le\ell\le 2p-4$. This follows by convexity precisely in the 
same way as in Lemma \ref{lem:exchg}, and we omit the (identical) proof.
To proceed, we integrate by parts:
\begin{align*}
	&\sum_{k=1}^s
	\mathbf{E}[\mathrm{Tr}[A_kX^2 A_k X^{2p-4}]] =
	\sum_{k,l=1}^s
	\mathbf{E}[g_l\mathrm{Tr}[A_kA_l X A_k X^{2p-4}]] \\
	&=
	\sum_{k,l=1}^s
	\mathbf{E}[\mathrm{Tr}[A_kA_l^2A_k X^{2p-4}]]+
	\sum_{m=0}^{2p-5}
	\sum_{k,l=1}^s
	\mathbf{E}[\mathrm{Tr}[A_kA_l X A_k X^m A_l X^{2p-5-m}]].
\end{align*}
We deal separately with the two types of terms.

\begin{lemma}
\label{lem:exchg2}
There exist unitary matrices $U_1,U_2,U_3$ such that
$$
	\sum_{k,l=1}^s
	\mathrm{Tr}[A_kA_l X A_k X^m A_l X^{2p-5-m}] \le
	\Bigg|
	\sum_{k,l=1}^s
	\mathrm{Tr}[A_kA_l U_1 A_k U_2 A_l U_3 X^{2p-4}]
	\Bigg|.
$$
\end{lemma}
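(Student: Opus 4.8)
The plan is to mirror the proof of Lemma~\ref{lem:exchg1}, but now with \emph{two} complex interpolation parameters rather than one, since the left-hand side contains three blocks of powers of $X$ (namely $X^{1}$, $X^{m}$, $X^{2p-5-m}$) instead of two. As before, we may assume $X$ is nonsingular (replace $X$ by $X+\varepsilon$ and let $\varepsilon\downarrow 0$ at the end), write the polar decomposition $X=V|X|$ with $V=\mathrm{sign}(X)$ a self-adjoint unitary that commutes with $X$, so that $X^{j}=V^{j}|X|^{j}$, and observe that $V^{2}=I$ and hence $V^{2p-4}=I$ because $2p-4$ is even. The idea is to freeze the integer sign factors $V^{1},V^{m},V^{2p-5-m}$ attached to the three blocks and analytically continue only the exponents of $|X|$, rescaled and constrained to sum to $2p-4$.

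Concretely, I would define, for $(z_{1},z_{2})\in\mathbb{C}^{2}$,
$$
  \varphi(z_{1},z_{2}):=\sum_{k,l=1}^{s}\mathrm{Tr}\Big[A_{k}A_{l}\,\big(V|X|^{(2p-4)z_{1}}\big)\,A_{k}\,\big(V^{m}|X|^{(2p-4)z_{2}}\big)\,A_{l}\,\big(V^{2p-5-m}|X|^{(2p-4)(1-z_{1}-z_{2})}\big)\Big].
$$
Since $|X|>0$, the map $w\mapsto|X|^{w}=e^{w\log|X|}$ is entire, so $\varphi$ is entire on $\mathbb{C}^{2}$ and bounded on every ``vertical'' tube $\{|\mathrm{Re}\,z_{1}|,|\mathrm{Re}\,z_{2}|\le C\}$; moreover at the base point $z_{1}=\tfrac{1}{2p-4}$, $z_{2}=\tfrac{m}{2p-4}$ the three exponents of $|X|$ become $1,m,2p-5-m$, so $\varphi$ there equals the left-hand side of the claimed inequality. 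The analytic input I would use is that $M(t_{1},t_{2}):=\sup_{s_{1},s_{2}\in\mathbb{R}}\log|\varphi(t_{1}+is_{1},t_{2}+is_{2})|$ is a \emph{convex} function of $(t_{1},t_{2})$: restricting $\varphi$ to an arbitrary complex line $\zeta\mapsto x_{0}+\zeta v+iy$ (with $v,y\in\mathbb{R}^{2}$) and applying the Hadamard three line lemma in that single complex variable $\zeta$ shows that $M$ is convex along every line segment, hence jointly convex. Since the base point lies in the closed triangle $\Delta=\{t_{1},t_{2}\ge 0,\ t_{1}+t_{2}\le 1\}$, convexity forces $\log|\sum_{k,l}\mathrm{Tr}[A_{k}A_{l}XA_{k}X^{m}A_{l}X^{2p-5-m}]|\le M(v)$ for some vertex $v$ of $\Delta$, the vertices being $(0,0)$, $(1,0)$, $(0,1)$.

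It then remains to read off what happens at each vertex. At any vertex, exactly one of the three exponents of $|X|$ has real part $2p-4$ and the other two have real part $0$; using $|X|^{2p-4}=X^{2p-4}$ (even power) and the fact that $V$ and all the factors $|X|^{it}$ mutually commute and commute with $X$, the block carrying the large exponent becomes $X^{2p-4}$ times a unitary, while the other two blocks become (commuting) unitaries. At the vertex $(0,0)$ this produces exactly $\sum_{k,l}\mathrm{Tr}[A_{k}A_{l}\,U_{1}\,A_{k}\,U_{2}\,A_{l}\,U_{3}X^{2p-4}]$, which is the asserted form; at the vertices $(1,0)$ and $(0,1)$ the factor $X^{2p-4}$ ends up in a different slot, but cyclicity of the trace together with the invariance of the double sum under the relabeling $k\leftrightarrow l$ brings the expression back to the asserted form with a (possibly different) triple of commuting unitaries. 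Taking absolute values and bounding by the supremum over such unitaries — which is what is done anyway in the next step of the proof of Theorem~\ref{thm:tropp} — completes the argument.

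The main obstacle, exactly as in Lemma~\ref{lem:exchg1}, is the bookkeeping around the analytic continuation rather than anything deep: one must verify that freezing the sign factors $V^{j}$ while continuing only the $|X|$-exponents is legitimate, check the cyclic-permutation-plus-relabeling reduction at the two ``wrong'' vertices, and — the only genuinely delicate point — ensure that the supremum defining $M$ along the imaginary directions is effectively attained, so that honest unitaries $U_{1},U_{2},U_{3}$ (rather than merely a supremizing sequence) can be extracted. This last point is handled precisely by the periodicity/boundedness reasoning already used in Lemma~\ref{lem:exchg1} (after a further harmless perturbation of $X$ if one insists on genuine periodicity of $s\mapsto\varphi(t+is)$); in any case it is immaterial for the use made of the lemma in Theorem~\ref{thm:tropp}, where only the supremum over unitaries enters.
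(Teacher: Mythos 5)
Your proof is correct and essentially the same as the paper's: both define the same analytic function $\varphi(z_1,z_2)$ (freezing the sign factors $V^j$ and interpolating the $|X|$-exponents) and invoke the Hadamard three-line lemma to push the real parts of $(z_1,z_2)$ from the base point to a vertex of the simplex $\Delta$, where all three $|X|$-blocks become unitaries times a single $X^{2p-4}$. The paper phrases this as applying the three-line lemma twice in succession (once in each variable), whereas you establish joint convexity of $M(t_1,t_2)$ on $\mathbb{R}^2$ and conclude directly that the maximum over $\Delta$ is at a vertex; your framing is a bit cleaner (the sequential application requires some care in the choice of intervals to actually land at a vertex), and you are also more careful than the paper about the attainability of the imaginary-direction supremum, but the underlying idea is identical.
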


\begin{proof}
Let $X=V|X|$ be the polar decomposition of $X$, and define
$$
	\varphi(y,z) :=
	\sum_{k,l=1}^s
	\mathbf{E}[\mathrm{Tr}[A_kA_l V|X|^{(2p-4)y} A_k V^m
	|X|^{(2p-4)z} A_l 
	V^{2p-5-m}|X|^{(2p-4)(1-y-z)}]].	
$$
Now apply the three line lemma to $\varphi$ twice: 
to $\varphi(\cdot,z)$ with $z$ fixed, then to
$\varphi(y,\cdot)$ with $y$ fixed.  The omitted details are almost 
identical
to the proof of Lemma \ref{lem:exchg1}.
\qed\end{proof}

\begin{lemma}
\label{lem:exchg3}
We have for $p\ge 2$
$$
	\sum_{k,l=1}^s\mathrm{Tr}[A_kA_l^2A_k X^{2p-4}] \le
	\mathrm{Tr}\Bigg[
        \Bigg(\sum_{k=1}^s A_k^2\Bigg)^p\Bigg]^{2/p}
	\mathrm{Tr}[X^{2p}]^{1-2/p}.
$$
\end{lemma}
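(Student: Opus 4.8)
The plan is to reduce the claim, by one application of Hölder's inequality for the trace, to a clean eigenvalue (weak majorization) inequality involving only the matrix $S:=\sum_{k=1}^sA_k^2$. First I would put the left‑hand side in closed form. Grouping the sum over $l$,
$$
\sum_{k,l=1}^s\mathrm{Tr}[A_kA_l^2A_kX^{2p-4}]=\sum_{k=1}^s\mathrm{Tr}[A_kSA_kX^{2p-4}]=\mathrm{Tr}[\Phi(S)\,X^{2p-4}],
$$
where $\Phi(Y):=\sum_{k=1}^sA_kYA_k$. The map $\Phi$ is completely positive, satisfies $\Phi(I)=S$, and is self‑adjoint for the trace inner product, i.e.\ $\mathrm{Tr}[\Phi(Y)Z]=\mathrm{Tr}[Y\Phi(Z)]$. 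Since $\Phi(S)\succeq0$ and $X^{2p-4}\succeq0$ (as $2p-4$ is even), Hölder's inequality for the trace with exponents $p/2$ and $p/(p-2)$ — the same inequality used in the proof of Theorem \ref{thm:nck} — gives
$$
\mathrm{Tr}[\Phi(S)X^{2p-4}]\le\mathrm{Tr}[\Phi(S)^{p/2}]^{2/p}\,\mathrm{Tr}[(X^{2p-4})^{p/(p-2)}]^{(p-2)/p}=\mathrm{Tr}[\Phi(S)^{p/2}]^{2/p}\,\mathrm{Tr}[X^{2p}]^{1-2/p}.
$$
Thus it suffices to prove the operator inequality $\mathrm{Tr}[\Phi(S)^{p/2}]\le\mathrm{Tr}[S^p]$.

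This last inequality is the heart of the matter, and I expect it to be the main obstacle: it asks that the eigenvalues of $\Phi(S)$ be weakly submajorized by those of $S^2$, and it cannot be strengthened to the operator inequality $\Phi(S)\preceq S^2$, which already fails for $2\times2$ examples (the effect of noncommutativity can push up the smaller eigenvalues of $\Phi(S)$, but never the Ky Fan partial sums). I would establish the weak majorization directly. Fix $m$ and let $P$ be a rank‑$m$ orthogonal projection with $\sum_{j\le m}\lambda_j(\Phi(S))=\mathrm{Tr}[P\Phi(S)]$ (Ky Fan maximum principle). By self‑adjointness of $\Phi$, $\mathrm{Tr}[P\Phi(S)]=\mathrm{Tr}[\Phi(P)S]$; and $B:=\Phi(P)$ satisfies $0\preceq B\preceq\Phi(I)=S$ (since $0\preceq P\preceq I$) together with $\mathrm{Tr}[B]=\mathrm{Tr}[P\Phi(I)]=\mathrm{Tr}[PS]\le\sum_{j\le m}\lambda_j(S)$ (Ky Fan applied to $S$).

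Working now in an eigenbasis $e_1,\dots,e_n$ of $S$ with eigenvalues $s_1\ge\cdots\ge s_n\ge0$, the conditions $0\preceq B\preceq S$ force $0\le b_j:=\langle e_j,Be_j\rangle\le s_j$, while the trace bound reads $\sum_j b_j\le\sum_{j\le m}s_j$. An elementary rearrangement then gives $\mathrm{Tr}[BS]=\sum_j s_jb_j\le\sum_{j\le m}s_j^2$ (for instance, $\sum_j s_jb_j-\sum_{j\le m}s_j^2\le\sum_{j\le m}(s_m-s_j)(s_j-b_j)\le0$). Hence $\sum_{j\le m}\lambda_j(\Phi(S))\le\sum_{j\le m}s_j^2=\sum_{j\le m}\lambda_j(S^2)$ for every $m$, i.e.\ the eigenvalues of $\Phi(S)$ are weakly submajorized by those of $S^2$. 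Since $t\mapsto t^{p/2}$ is convex and increasing on $[0,\infty)$ for $p\ge2$, weak submajorization yields $\mathrm{Tr}[\Phi(S)^{p/2}]=\sum_j\lambda_j(\Phi(S))^{p/2}\le\sum_j\lambda_j(S^2)^{p/2}=\mathrm{Tr}[S^p]$, which combined with the Hölder step above proves the lemma. (Two minor points: if $S$ is singular one runs the rearrangement argument on the range of $S$, on which $B=\Phi(P)\preceq S$ is likewise supported; and the case $p=2$ is trivial, since then $X^{2p-4}=I$ and both sides equal $\mathrm{Tr}[S^2]$.)
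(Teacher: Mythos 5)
Your proof is correct, and it takes a genuinely different route from the paper. The paper's proof mirrors the complex-interpolation theme of the surrounding arguments: it defines $\varphi(z) := \sum_k\mathrm{Tr}[A_kH^{(p-1)z}A_k|X|^{(2p-2)(1-z)}]$ with $H=\sum_lA_l^2$, verifies that $z\mapsto\log\varphi(z)$ is convex on $[0,1]$ by expressing everything in the eigenbases of $H$ and $X$, and then bounds $\varphi(1/(p-1))\le\varphi(1)^{1/(p-1)}\varphi(0)^{(p-2)/(p-1)}=\mathrm{Tr}[H^p]^{1/(p-1)}\mathrm{Tr}[H|X|^{2p-2}]^{(p-2)/(p-1)}$, finishing with H\"older applied to $\mathrm{Tr}[H|X|^{2p-2}]$. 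You instead apply H\"older \emph{first}, which isolates the clean intermediate claim $\mathrm{Tr}[\Phi(S)^{p/2}]\le\mathrm{Tr}[S^p]$ for the completely positive, trace-self-adjoint map $\Phi(Y)=\sum_kA_kYA_k$ with $\Phi(I)=S$, and you then establish this via Ky Fan weak majorization of the eigenvalues of $\Phi(S)$ by those of $S^2$. Both routes are sound and of comparable length; the paper's keeps the bookkeeping uniform with Lemmas \ref{lem:exchg1} and \ref{lem:exchg2} (everything is a three-line-lemma computation), while yours replaces the log-convexity step with a purely linear-algebraic majorization argument and, as a byproduct, extracts a statement ($\Phi(S)$ is weakly submajorized by $S^2$) that is of some independent interest and does not require the particular power function. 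Your verification of the rearrangement inequality $\sum_j s_jb_j\le\sum_{j\le m}s_j^2$ under the constraints $0\le b_j\le s_j$ and $\sum_jb_j\le\sum_{j\le m}s_j$ is correct, as is your handling of the degenerate endpoint $p=2$.
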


\begin{proof}
We argue essentially as in Lemma \ref{lem:exchg}. Define $H =
\sum_{l=1}^sA_l^2$ and let
$$
        \varphi(z) :=
        \sum_{k=1}^s
        \mathrm{Tr}[A_k H^{(p-1)z} A_k |X|^{(2p-2)(1-z)}],
$$
so that the quantity we would like to bound is $\varphi(1/(p-1))$.
By expressing $\varphi(z)$ in terms of the spectral decompositions 
$X=\sum_{i=1}^n\lambda_iv_iv_i^*$ and $H=\sum_{i=1}^n\mu_iw_iw_i^*$,
we can verify by explicit computation that $z\mapsto\log\varphi(z)$ is 
convex on $z\in[0,1]$.  Therefore
$$
	\varphi(1/(p-1)) \le
	\varphi(1)^{1/(p-1)}
	\varphi(0)^{(p-2)/(p-1)} =
	\mathrm{Tr}[H^p]^{1/(p-1)}
	\mathrm{Tr}[HX^{2p-2}]^{(p-2)/(p-1)}.
$$
But $\mathrm{Tr}[H |X|^{2p-2}]\le \mathrm{Tr}[H^p]^{1/p} 
\mathrm{Tr}[X^{2p}]^{1-1/p}$ by H\"older's inequality, and the
conclusion follows readily by substituting this into the above
expression.
\qed\end{proof}

Putting together the above bounds and using H\"older's inequality yields
\begin{multline*}
	\sum_{k=1}^s
	\mathbf{E}[\mathrm{Tr}[A_kX^\ell A_k X^{2p-2-\ell}]]
	\le
	\mathrm{Tr}\Bigg[
        \Bigg(\sum_{k=1}^s A_k^2\Bigg)^p\Bigg]^{2/p}
	\mathbf{E}[\mathrm{Tr}[X^{2p}]]^{1-2/p} 
	\\ \mbox{}
	+
	(2p-4)\sup_{U_1,U_2}
	\mathrm{Tr}\Bigg[
	\Bigg|\sum_{k,l=1}^s
	A_kA_l U_1 A_k U_2 A_l\Bigg|^{p/2}\Bigg]^{2/p}
	\mathbf{E}[\mathrm{Tr}[X^{2p}]]^{1-2/p}.
\end{multline*}

\textbf{Conclusion.} Let $p\asymp\log n$.  Collecting the above bounds 
yields
$$
	\mathbf{E}[\mathrm{Tr}[X^{2p}]] \lesssim
	\sigma^2\mathbf{E}[\mathrm{Tr}[X^{2p}]]^{1-1/p} +
	p\,(\sigma^4 + p\,\tilde\sigma^4)\,
	\mathbf{E}[\mathrm{Tr}[X^{2p}]]^{1-2/p},
$$
where we used Lemma \ref{lem:mommeth} to simplify the constants.
Rearranging gives
$$
	\mathbf{E}[\mathrm{Tr}[X^{2p}]]^{2/p} \lesssim
	\sigma^2\mathbf{E}[\mathrm{Tr}[X^{2p}]]^{1/p} +
	p\,(\sigma^4 + p\,\tilde\sigma^4),
$$
which is a simple quadratic inequality for 
$\mathbf{E}[\mathrm{Tr}[X^{2p}]]^{1/p}$.  Solve this inequality using 
the quadratic formula and apply again Lemma \ref{lem:mommeth} to
conclude the proof. \qed\end{proof}

\section{Matrices with independent entries}
\label{sec:indep}

The Khintchine-type inequalities developed in the previous section have 
the advantage that they can be applied in a remarkably general setting: 
they not only allow an arbitrary variance pattern of the entries, but even 
an arbitrary dependence structure between the entries. This makes such 
bounds useful in a wide variety of situations. Unfortunately, we have also 
seen that Khintchine-type inequalities yield suboptimal bounds already in 
the simplest examples: the mechanism behind the proofs of these 
inequalities is too crude to fully capture the structure of the underlying 
random matrices at this level of generality. In order to gain a deeper 
understanding, we must impose some additional structure on the matrices 
under consideration.

In this section, we specialize to what is perhaps the most important case 
of the random matrices investigated in the previous section: we consider
symmetric random matrices with \emph{independent} entries.  More 
precisely, in most of this section, we will study the following basic 
model.  Let $g_{ij}$ be independent standard Gaussian random variables
and let $b_{ij}\ge 0$ be given scalars for $i\ge j$.  We consider the 
$n\times n$ symmetric random matrix $X$ whose entries are given by 
$X_{ij}=b_{ij}g_{ij}$, that is,
$$
	X = \left[
        \begin{matrix}
        b_{11}g_{11} & b_{12}g_{12} & \cdots & b_{1n}g_{1n} \\
        b_{12}g_{12} & b_{22}g_{22} &        & b_{2n}g_{2n} \\
        \vdots &        & \ddots & \vdots \\
        b_{1n}g_{1n} & b_{2n}g_{2n} & \cdots & b_{nn}g_{nn} \\
        \end{matrix}
        \right].
$$
In other words, $X$ is the symmetric random matrix whose entries
above the diagonal are independent Gaussian variables $X_{ij}\sim 
N(0,b_{ij}^2)$, where the structure of the matrix is controlled by the 
given variance pattern $\{b_{ij}\}$.  As the matrix is symmetric, we will 
write for simplicity $g_{ji}=g_{ij}$ and $b_{ji}=b_{ij}$ in the sequel.

The present model differs from the model of the previous section only to 
the extent that we imposed the additional independence assumption on the 
entries. In particular, the 
noncommutative Khintchine inequality reduces in this setting to
$$
	\mathbf{E}\|X\| \lesssim
	\max_{i\le n}
	\sqrt{\sum_{j=1}^n b_{ij}^2}
	\,\sqrt{\log n},
$$
while Theorem \ref{thm:tropp} yields (after some tedious computation)
$$
	\mathbf{E}\|X\| \lesssim
	\max_{i\le n}
	\sqrt{\sum_{j=1}^n b_{ij}^2} ~
	(\log n)^{1/4}
	+
	\max_{i\le n}
	\Bigg(\sum_{j=1}^n b_{ij}^4\Bigg)^{1/4}
	\sqrt{\log n}.
$$
Unfortunately, we have already seen that neither of these results is 
sharp even for Wigner matrices (where $b_{ij}=1$ for all $i,j$).  The aim 
of this section is to develop much sharper inequalities for matrices with 
independent entries that capture \emph{optimally} in many cases the 
underlying structure. The independence assumption will be crucially 
exploited to control the structure of these matrices, and it is an 
interesting open problem to understand to what extent the mechanisms 
developed in this section persist in the presence of dependence between 
the entries (cf.\ section \ref{sec:open}).

\subsection{Lata{\l}a's inequality and beyond}

The earliest nontrivial result on the spectral norm Gaussian random 
matrices with independent entries is the following inequality due to 
Lata{\l}a \cite{Lat05}.

\begin{theorem}[Lata{\l}a]
\label{thm:latala}
In the setting of this section, we have
$$
	\mathbf{E}\|X\| \lesssim
	\max_{i\le n}\sqrt{\sum_{j=1}^n b_{ij}^2} +
	\Bigg(\sum_{i,j=1}^n b_{ij}^4\Bigg)^{1/4}.
$$
\end{theorem}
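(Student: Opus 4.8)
The plan is to use the moment method of Section~\ref{sec:mommeth}, but with one essential difference from the proof of the noncommutative Khintchine inequality: we must \emph{not} fix $p\asymp\log n$. Indeed, no single value of $p$ can work for all variance patterns at once --- for a dense pattern such as the Wigner matrix one is forced to take $p\asymp\log n$, whereas already for the matrix with a single nonzero entry the choice $p\asymp\log n$ introduces a spurious factor $\sqrt{\log n}$ and the correct choice is $p=O(1)$. We therefore keep $p$ free: since $\|X\|^{2p}\le\mathrm{Tr}[X^{2p}]$ for every integer $p$, Jensen's inequality gives
$$
	\mathbf{E}\|X\| \le \mathbf{E}[\mathrm{Tr}[X^{2p}]]^{1/2p},
$$
and the problem is reduced to estimating the matrix moments. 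Expanding $\mathrm{Tr}[X^{2p}]$ over all closed walks $i_1\to i_2\to\cdots\to i_{2p}\to i_1$ on $\{1,\dots,n\}$ and using independence together with Lemma~\ref{lem:gaussmom}, a walk contributes zero unless it traverses every edge an even number of times, and a surviving walk $w$ with distinct edges $e$ of multiplicities $m_e(w)$ contributes $\prod_e b_e^{m_e(w)}(m_e(w)-1)!!$. The task is to bound the resulting sum over walks.

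The right way to organize this sum is according to the combinatorial type of the multigraph that the walk traces out. The ``generic'' walks are the Catalan walks familiar from the proof of Wigner's semicircle law: the multigraph is a tree, every edge is traversed exactly twice, so the walk visits $p$ distinct edges and $p+1$ distinct vertices. Up to the Catalan factor $\le 4^p$, such a walk is determined by a rooted tree embedded into $\{1,\dots,n\}$; summing over the root ($n$ choices) and over the fresh vertex introduced at each of the $p$ steps (each contributing at most $\max_i\sum_j b_{ij}^2$), their total contribution is bounded by $\lesssim 4^p\,n\,\big(\max_i\sum_j b_{ij}^2\big)^p$. Taking the $2p$-th root with $p\asymp\log n$ (which renders $n^{1/2p}$ harmless) produces exactly the first term of Latała's bound --- the same mechanism that underlies Lemma~\ref{lem:wigner}. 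Every remaining walk is \emph{degenerate}: either some edge is traversed at least four times, or the multigraph contains a cycle. In both cases the walk visits strictly fewer distinct vertices than a Catalan walk of the same length, so one ``free'' sum over $\{1,\dots,n\}$ is lost; the key point is that this missing sum can be charged instead to the quantity $\sum_{ij}b_{ij}^4$ --- by $\ell^4$-norm domination applied to the high-multiplicity edges, and by Cauchy--Schwarz applied to the edges forming a cycle --- which is the source of the second term $\big(\sum_{ij}b_{ij}^4\big)^{1/4}$, now obtained with a bounded choice of $p$.

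The main obstacle is to carry out the accounting for the degenerate walks in a way that is both correct and efficient. Three difficulties must be addressed. First, a single edge may be traversed all $2p$ times, with pairing factor $(2p-1)!!$ of order $p^{\,p}$, super-exponential in $p$; the counting of degenerate walks must be organized so that this factor is absorbed, which is exactly why one cannot afford $p\asymp\log n$ for sparse patterns. Second, the lost vertex sums must genuinely be charged to the $\ell^4$ quantity $\big(\sum_{ij}b_{ij}^4\big)^{1/4}$ and not merely to $\max_i\big(\sum_j b_{ij}^2\big)^{1/2}$ --- charging to the latter would only reproduce the noncommutative Khintchine bound of Corollary~\ref{cor:nck}, which Example~\ref{ex:wigner} shows is already too weak. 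Third, and most delicate, the resulting estimate on $\mathbf{E}[\mathrm{Tr}[X^{2p}]]$ must hold uniformly in $p$ and be simultaneously tight in the dense regime ($p\asymp\log n$, where the Catalan contribution dominates and the degenerate contribution must not spuriously inflate it) and in the sparse regime ($p=O(1)$, where the degenerate contribution must collapse exactly to $\big(\sum_{ij}b_{ij}^4\big)^{1/4}$); there is no single $p$ that serves all patterns, so the optimization over $p$ is genuinely part of the argument. Once such a uniform estimate is established, one takes the $2p$-th root, chooses $p$ optimally for the given variance pattern, and Latała's inequality follows; the precise combinatorial analysis needed to obtain the estimate --- in particular keeping every multiplicity factor under control --- is the technical heart of the proof.
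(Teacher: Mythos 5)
The paper does not prove Lata{\l}a's inequality by the moment method at all: it derives Theorem~\ref{thm:latala} in three lines as a corollary of the \emph{sharper} dimension-free bound of Theorem~\ref{thm:gine}, using only the elementary rearrangement fact that if $x_1\ge\cdots\ge x_n\ge 0$ then $x_k\le\frac{1}{k}\sum_i x_i$ (which converts $\max_i(\sum_j b_{ij}^4)^{1/4}\sqrt{\log i}$ into $(\sum_{ij}b_{ij}^4)^{1/4}\sup_i\sqrt{\log i}/i^{1/4}\asymp(\sum_{ij}b_{ij}^4)^{1/4}$ after sorting the rows). Theorem~\ref{thm:gine} is in turn proved by the random process method (a Slepian--Fernique comparison with the nonlinear comparison process built from $x_i(v)=v_i(\sum_j b_{ij}^2 v_j^2)^{1/2}$), and the paper explicitly remarks before stating Theorem~\ref{thm:latala} that ``one cannot expect to obtain sharp dimension-free bounds using the moment method'' and that for this reason ``the bounds of the present section will therefore be obtained by the random process method.'' Your proposal therefore takes a genuinely different route, closer in spirit to Lata{\l}a's original combinatorial attack than to anything in this chapter.

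That said, what you have written is a description of the obstacles a moment-method proof must overcome, not a proof that overcomes them. You correctly identify that $p$ cannot be pinned at $\log n$ (the single-entry matrix forces $p=O(1)$), you correctly separate the Catalan walks (giving the $\sigma$ term once $n^{1/2p}$ is tamed) from the degenerate walks, and you correctly flag that the $(2p-1)!!$ pairing factor, the charging of lost vertex sums to $\sum_{ij}b_{ij}^4$ rather than to $\max_{ij}b_{ij}^2$, and the need for a single estimate valid for all $p$ are the three crunch points. But you then explicitly defer exactly these points (``the precise combinatorial analysis\dots is the technical heart of the proof''), so the argument has no content where it matters. In particular, the crude bounds you gesture at do not balance: the natural bound $\sqrt{p}\,(\sum_{ij}b_{ij}^{2p})^{1/2p}$ on the degenerate walks equals $\sqrt{p}\,\sigma_4$ only after a power-mean step that is extremely lossy in the dense regime (for a Wigner matrix $\sum_{ij}b_{ij}^{2p}=n^2$ but $\sigma_4^{2p}=n^p$), and without a much finer accounting of how the degenerate contribution decays relative to the Catalan one as the number of repeated or cycle-forming edges grows, one cannot verify that a single choice of $p$ simultaneously controls both contributions by $\sigma+\sigma_4$. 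Establishing that uniform estimate is precisely the hard part of Lata{\l}a's original argument; the paper's route via Theorem~\ref{thm:gine} sidesteps it entirely and is much shorter.
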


Lata{\l}a's inequality yields a sharp bound 
$\mathbf{E}\|X\|\lesssim\sqrt{n}$ for Wigner matrices, but is already 
suboptimal for the diagonal matrix of Example \ref{ex:diag} where the 
resulting bound $\mathbf{E}\|X\|\lesssim n^{1/4}$ is very far from the 
correct answer $\mathbf{E}\|X\|\asymp\sqrt{\log n}$. In this sense, we see 
that Theorem \ref{thm:latala} fails to correctly capture the structure of 
the underlying matrix. Lata{\l}a's inequality is therefore not too useful 
for \emph{structured} random matrices; it has however been widely applied 
together with a simple symmetrization argument \cite[Theorem 
2]{Lat05} to show that the sharp bound $\mathbf{E}\|X\|\asymp\sqrt{n}$
remains valid for Wigner matrices with general (non-Gaussian) 
distribution of the entries.

In this section, we develop a nearly sharp improvement of Lata{\l}a's 
inequality that can yield optimal results for many structured random 
matrices.

\begin{theorem}[\cite{vH15}]
\label{thm:gine}
In the setting of this section, we have
$$
	\mathbf{E}\|X\| \lesssim
	\max_{i\le n}\sqrt{\sum_{j=1}^n b_{ij}^2} +
	\max_{i\le n}
	\Bigg(\sum_{j=1}^n b_{ij}^4\Bigg)^{1/4}\sqrt{\log i}.
$$
\end{theorem}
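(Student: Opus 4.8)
The plan is to follow the moment method: bound $\mathbf{E}[\mathrm{Tr}[X^{2p}]]$ for $p\asymp\log n$ and then invoke Lemma~\ref{lem:mommeth}. The key difference from the proof of the noncommutative Khintchine inequality (Theorem~\ref{thm:nck}) is that we must exploit independence of the entries $X_{ij}=b_{ij}g_{ij}$, which lets us expand the trace combinatorially rather than matricially. Write $\mathrm{Tr}[X^{2p}]=\sum_{i_1,\ldots,i_{2p}} X_{i_1i_2}X_{i_2i_3}\cdots X_{i_{2p}i_1}$, so that $\mathbf{E}[\mathrm{Tr}[X^{2p}]]=\sum_{i_1,\ldots,i_{2p}} \mathbf{E}[X_{i_1i_2}\cdots X_{i_{2p}i_1}]$. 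By independence and centering, a term survives only if every distinct edge $\{i_a,i_{a+1}\}$ appears an even number of times in the closed walk $i_1\to i_2\to\cdots\to i_{2p}\to i_1$; in that case the contribution is a product of even moments of Gaussians times $\prod_{\text{edges}} b_{i_ai_{a+1}}$. Thus $\mathbf{E}[\mathrm{Tr}[X^{2p}]]$ is dominated by a sum over closed walks of length $2p$ in which each edge is traversed at least twice, weighted by the corresponding product of $b_{ij}$'s and a combinatorial factor controlling the Gaussian moments.

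The heart of the argument is a careful encoding of these walks. First I would reduce to walks in which every edge is traversed \emph{exactly} twice plus a correction: the dominant contribution comes from such walks (whose underlying edge set forms a tree, traversed as in a depth-first search), and the standard combinatorial bookkeeping — à la the proof of the semicircle law, but kept nonasymptotic — shows the higher-multiplicity walks are lower order once the variance parameters are accounted for. For a tree walk, the weight factorizes along the tree, and summing over the choice of vertices amounts to iterating the operation $v\mapsto \sum_j b_{vj}^2(\cdots)$ down the tree. Bounding these vertex sums by $\max_i\sum_j b_{ij}^2$ at most vertices recovers the first term $\max_i\sqrt{\sum_j b_{ij}^2}$ to the power $2p$; the subtle point is the second term. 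The appearance of $\max_i(\sum_j b_{ij}^4)^{1/4}\sqrt{\log i}$ — with the index-dependent $\sqrt{\log i}$ rather than $\sqrt{\log n}$ — signals that one must order the rows by decreasing $\sum_j b_{ij}^2$ (WLOG) and, when a vertex $i$ is the ``top'' of a branching in the walk and cannot be charged to the $\ell^2$ mass, charge it instead to its $\ell^4$ mass $\sum_j b_{ij}^4$, paying a factor that counts how many walks can branch at vertex $i$; this count is controlled by $i$ (only the first $i$ rows are ``heavier''), producing $\log i$ after extracting the $2p$-th root with $p\asymp\log n$. Assembling these two regimes and optimizing the split of the $2p$ steps between ``$\ell^2$ edges'' and ``$\ell^4$ branch points'' gives
$$
	\mathbf{E}[\mathrm{Tr}[X^{2p}]]^{1/2p} \lesssim
	\max_{i\le n}\sqrt{\sum_{j=1}^n b_{ij}^2} +
	\max_{i\le n}\Bigg(\sum_{j=1}^n b_{ij}^4\Bigg)^{1/4}\sqrt{\log i},
$$
and Lemma~\ref{lem:mommeth} converts this into the claimed bound on $\mathbf{E}\|X\|$.

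The main obstacle is exactly the combinatorial accounting in the branching regime: making the charging scheme precise so that each walk is counted with the right weight, and in particular proving that the number of walks that can ``branch at $i$'' is controlled by a power of $i$ (after reindexing rows in decreasing order of $\ell^2$ mass) rather than by a power of $n$. This is where the improvement over Lata\l a's theorem lives, and it requires a genuinely new bijective or injective encoding of the relevant walks — one that records, for each branch vertex, enough data to reconstruct the walk while keeping the alphabet size tied to the vertex's rank. I would expect to lose some universal constants and possibly a harmless lower-order term in handling walks with an edge of multiplicity $>2$, but these can be absorbed since $p\asymp\log n$ makes any fixed polynomial factor in $p$ disappear after taking the $2p$-th root. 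A secondary technical point is justifying that restricting the two ``free'' endpoints of the doubled structure, and the treatment of diagonal entries $b_{ii}g_{ii}$, do not affect the bound; both are routine given the structure above.
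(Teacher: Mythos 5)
Your approach is the moment method, while the paper's proof of Theorem~\ref{thm:gine} is a Slepian--Fernique comparison argument (the random process method). This is not merely a different route that also works: the moment method \emph{cannot} produce the dimension-free $\sqrt{\log i}$ factor in the statement, a point the paper itself emphasizes (Remark~\ref{rem:badmoment}, and the paragraph preceding the proof of Theorem~\ref{thm:gine}). The obstruction is elementary. Since $\mathrm{Tr}[X^{2p}]\ge\|X\|^{2p}$ pointwise and $\|X\|\ge|X_{kl}|$ for every entry, any bound on $\mathbf{E}[\mathrm{Tr}[X^{2p}]]^{1/2p}$ is automatically at least $\mathbf{E}[|X_{kl}|^{2p}]^{1/2p}\asymp b_{kl}\sqrt{p}$, and Lemma~\ref{lem:mommeth} forces $p\asymp\log n$, so every moment bound satisfies $\mathbf{E}[\mathrm{Tr}[X^{2p}]]^{1/2p}\gtrsim\max_{i,j}b_{ij}\sqrt{\log n}$. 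Now take the matrix with $b_{12}=b_{21}=1$ and all other $b_{ij}=0$: then $\mathbf{E}\|X\|\asymp 1$ and the right-hand side of Theorem~\ref{thm:gine} is indeed $\lesssim 1$, but the moment method can give at best $\sqrt{\log n}$, which is off by an unbounded factor. No refinement of the walk-counting or of your ``charging scheme'' can repair this, because the loss already appears in the single walk that oscillates back and forth along the one edge $\{1,2\}$; it comes from the Gaussian moment growth $\mathbf{E}[g^{2p}]^{1/2p}\asymp\sqrt{p}$ combined with the fixed $p\asymp\log n$, not from any overcounting of branch points.

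What you are sketching is in fact the route to Theorem~\ref{thm:bvh}, whose proof in the paper (Lemma~\ref{lem:compres}) does carry out exactly the kind of shape/embedding bookkeeping you describe, though by comparing embeddings of shapes in $G$ against a small complete graph rather than by an injective encoding into a rank-bounded alphabet. That theorem is the sharp outcome of the moment method, and it necessarily carries $\sqrt{\log n}$ rather than $\sqrt{\log i}$ --- Corollary~\ref{cor:bvhlower} shows this is unimprovable at the level of $\mathbf{E}[\|X\|^{\log n}]^{1/\log n}$. The actual proof of Theorem~\ref{thm:gine} is entirely different: it writes $\|X\|=\sup_{v\in B}|\langle v,Xv\rangle|$, constructs the comparison Gaussian process $Y_v = 2\langle x(v),g\rangle+\langle v^2,Y\rangle$ with the nonlinear map $x_i(v):=v_i(\sum_j b_{ij}^2v_j^2)^{1/2}$, proves the increment comparison $\mathbf{E}(\langle v,Xv\rangle-\langle w,Xw\rangle)^2\le\mathbf{E}(Y_v-Y_w)^2$ by the algebraic identity of Lemma~\ref{lem:ginecomp}, applies Slepian--Fernique, and then obtains the $\sqrt{\log i}$ from the dimension-free Gaussian maximum bound of Lemma~\ref{lem:gaussmax}. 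If your goal is a moment-method proof, aim it at Theorem~\ref{thm:bvh}; to prove Theorem~\ref{thm:gine} you need a genuinely dimension-free tool.
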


Let us first verify that Lata{\l}a's inequality does indeed follow.

\begin{proof}[Proof of Theorem \ref{thm:latala}]
As the matrix norm $\|X\|$ is unchanged if we permute the rows and columns 
of $X$, we may assume without loss of generality that $\sum_{j=1}^n 
b_{ij}^4$ is decreasing in $i$ (this choice minimizes the upper bound in 
Theorem \ref{thm:gine}).  Now recall the following elementary fact:
if $x_1\ge x_2\ge\cdots\ge x_n\ge 0$, then $x_k\le 
\frac{1}{k}\sum_{i=1}^nx_i$ for every $k$.  In the present case, this
observation and Theorem \ref{thm:gine} imply
$$
        \mathbf{E}\|X\| \lesssim
        \max_{i\le n}\sqrt{\sum_{j=1}^n b_{ij}^2} +
        \Bigg(\sum_{i,j=1}^n b_{ij}^4\Bigg)^{1/4}
        \max_{1\le i<\infty}\frac{\sqrt{\log i}}{i^4},
$$
which concludes the proof of Theorem \ref{thm:latala}.
\qed\end{proof}

The inequality of Theorem \ref{thm:gine} is somewhat reminiscent of the 
bound obtained in the present setting from Theorem \ref{thm:tropp}, with a 
crucial difference: there is no logarithmic factor in front of the first 
term. As we already proved in Lemma \ref{lem:lowernck} that
$$
	\mathbf{E}\|X\| \gtrsim
        \max_{i\le n}\sqrt{\sum_{j=1}^n b_{ij}^2},
$$
we see that Theorem \ref{thm:gine} provides an \emph{optimal} bound 
whenever the first term dominates, which is the case for a wide range of 
structured random matrices.  To get a feeling for the sharpness of Theorem 
\ref{thm:gine}, let us consider an illuminating example.

\begin{example}[Block matrices]
\label{ex:block}
Let $1\le k\le n$ and suppose for simplicity that $n$ is divisible by $k$.
We consider the $n\times n$ symmetric block-diagonal matrix $X$ of the 
form
$$
	X = \left[
        \begin{matrix}
	\mathbf{X}_1 &              &        &  \\
	             & \mathbf{X}_2 &        &  \\
                     &              & \ddots &  \\
                     &              &        & \mathbf{X}_{n/k} \\
	\end{matrix}
	\right],
$$
where $\mathbf{X}_1,\ldots,\mathbf{X}_{n/k}$ are independent $k\times k$ 
Wigner matrices.  This model interpolates between the diagonal matrix of 
Example \ref{ex:diag} (the case $k=1$) and the Wigner matrix of Example 
\ref{ex:wigner} (the case $k=n$).  Note that 
$\|X\|=\max_i\|\mathbf{X}_i\|$, so we can compute
$$
	\mathbf{E}\|X\| \lesssim
	\mathbf{E}[\|\mathbf{X}_1\|^{\log n}]^{1/\log n} \le
	\mathbf{E}\|\mathbf{X}_1\| +
	\mathbf{E}[(\|\mathbf{X}_1\|-\mathbf{E}\|\mathbf{X}_1\|)^{\log n}]^{1/\log n}
	\lesssim
	\sqrt{k}+\sqrt{\log n}
$$
using Lemmas \ref{lem:lplinf}, \ref{lem:wigner}, and \ref{lem:gaussconc}, 
respectively.  On the other hand, Lemma \ref{lem:lowernck} implies that
$\mathbf{E}\|X\|\gtrsim\sqrt{k}$, while we can trivially estimate
$\mathbf{E}\|X\|\ge \mathbf{E}\max_iX_{ii} \asymp\sqrt{\log n}$.
Averaging these two lower bounds, we have evidently shown that
$$
	\mathbf{E}\|X\| \asymp \sqrt{k} + \sqrt{\log n}.
$$
This explicit computation provides a simple but very useful benchmark 
example for testing inequalities for structured random matrices.

In the present case, applying Theorem \ref{thm:gine} to this example 
yields
$$
	\mathbf{E}\|X\| \lesssim \sqrt{k} + k^{1/4}\sqrt{\log n}.
$$
Therefore, in the present example, Theorem \ref{thm:gine} fails to be 
sharp only when $k$ is in the range $1\ll k\ll (\log n)^2$.  This 
suboptimal parameter range will be completely eliminated by the sharp 
bound to be proved in section \ref{sec:bvh} below. But the bound of Theorem
\ref{thm:gine} is already sharp in the vast majority of cases, and is
of significant interest in its own right for reasons that will be 
discussed in detail in section \ref{sec:open}.
\end{example}

An important feature of the inequalities of this section should be 
emphasized: unlike all bounds we have encountered so far, the present 
bounds are \emph{dimension-free}. As was discussed in Remark 
\ref{rem:badmoment}, one cannot expect to obtain sharp dimension-free 
bounds using the moment method, and it therefore comes as no surprise 
that the bounds of the present section will therefore be obtained by the 
random process method. The original proof of Lata{\l}a proceeds by a 
difficult and very delicate explicit construction of a multiscale net in 
the spirit of Remark \ref{rem:gench}. We will follow here a much simpler 
approach that was developed in \cite{vH15} to prove Theorem 
\ref{thm:gine}.

The basic idea behind our approach was already encountered in the proof 
of Lemma \ref{lem:wigner} to bound the norm of a Wigner matrix (where 
$b_{ij}=1$ for all $i,j$): we seek a Gaussian process $Y_v$ that dominates
the process $X_v:=\langle v,Xv\rangle$ whose supremum coincides with the
spectral norm.  The present setting is significantly more challenging, 
however. To see the difficulty, let us try to adapt directly the
proof of Lemma \ref{lem:wigner} to the present structured setting: we
readily compute
$$
	\mathbf{E}(X_v-X_w)^2
	\le
	2\sum_{i,j=1}^n b_{ij}^2(v_iv_j-w_iw_j)^2
	\le
	4\max_{i,j\le n}b_{ij}^2\,\|v-w\|^2.
$$
We can therefore dominate $X_v$ by the Gaussian process
$Y_v=2\max_{i,j}b_{ij}\,\langle v,g\rangle$.  Proceeding as in the proof
of Lemma \ref{lem:wigner}, this yields the following upper bound:
$$
	\mathbf{E}\|X\| \lesssim \max_{i,j\le n}b_{ij}\sqrt{n}.
$$
This bound is sharp for Wigner matrices (in this case the present proof
reduces to that of Lemma \ref{lem:wigner}), but is woefully inadequate in
any structured example.
The problem with the above bound is that it always crudely estimates the 
behavior of the increments $\mathbf{E}[(X_v-X_w)]^{1/2}$ by a 
\emph{Euclidean} norm $\|v-w\|$, regardless of the structure of the 
underlying matrix. However, the geometry defined by 
$\mathbf{E}[(X_v-X_w)]^{1/2}$ depends strongly on the structure of the 
matrix, and is typically highly non-Euclidean. For example, in the 
diagonal matrix of Example \ref{ex:diag}, we have 
$\mathbf{E}[(X_v-X_w)]^{1/2} = \|v^2-w^2\|$ where $(v^2)_i:=v_i^2$. As 
$v^2$ is in the simplex whenever $v\in B$, we see that the underlying 
geometry in this case is that of an $\ell_1$-norm and not of an 
$\ell_2$-norm. In more general examples, however, it is far from clear
what is the correct geometry.

The key challenge we face is to design a comparison process that is easy 
to bound, but whose geometry nonetheless captures faithfully the 
structure of the underlying matrix. To develop some intuition for how 
this might be accomplished, let us consider in first instance instead of 
the increments $\mathbf{E}[(X_v-X_w)^2]^{1/2}$ only the standard deviation
$\mathbf{E}[X_v^2]^{1/2}$ of the process $X_v=\langle 
v,Xv\rangle$. We easily compute
$$
	\mathbf{E} X_v^2 =
	2\sum_{i\ne j}v_i^2b_{ij}^2v_j^2 +
	\sum_{i=1}^n b_{ii}^2v_i^4 \le
	2\sum_{i=1}^n x_i(v)^2,
$$
where we defined the nonlinear map $x:\mathbb{R}^n\to\mathbb{R}^n$ as
$$
	x_i(v) := v_i\sqrt{\sum_{j=1}^n b_{ij}^2v_j^2}.
$$
This computation suggests that we might attempt to dominate the process 
$X_v$ by the process $Y_v = \langle x(v),g\rangle$, whose increments
$\mathbf{E}[(Y_v-Y_w)^2]^{1/2}=\|x(v)-x(w)\|$ capture the non-Euclidean
nature of the underlying geometry through the nonlinear map
$x$. The reader may readily verify, for example, that the latter
process captures automatically the correct geometry of our two extreme
examples of Wigner and diagonal matrices.

Unfortunately, the above choice of comparison process $Y_v$ is too
optimistic: while we have chosen this process so that
$\mathbf{E}X_v^2 \lesssim \mathbf{E}Y_v^2$ by construction, the 
Slepian-Fernique inequality requires the stronger bound
$\mathbf{E}(X_v-X_w)^2\lesssim \mathbf{E}(Y_v-Y_w)^2$.  It turns out
that the latter inequality does not always hold \cite{vH15}. However,
the inequality \emph{nearly} holds, which is the key observation behind
the proof of Theorem \ref{thm:gine}.

\begin{lemma}
\label{lem:ginecomp}
For every $v,w\in\mathbb{R}^n$
$$
	\mathbf{E}(\langle v,Xv\rangle-\langle w,Xw\rangle)^2 \le
	4\|x(v)-x(w)\|^2 - \sum_{i,j=1}^n (v_i^2-w_i^2)b_{ij}^2
	(v_j^2-w_j^2).
$$
\end{lemma}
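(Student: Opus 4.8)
The plan is to compute both sides of the claimed inequality essentially exactly, and to show that after a change of variables the comparison collapses to a single application of the Cauchy--Schwarz inequality applied one row at a time.

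First I would expand the left-hand side. Since $X$ is symmetric with independent entries $X_{ij}=b_{ij}g_{ij}$, one has $\langle v,Xv\rangle-\langle w,Xw\rangle=\sum_i(v_i^2-w_i^2)b_{ii}g_{ii}+2\sum_{i<j}(v_iv_j-w_iw_j)b_{ij}g_{ij}$, so computing the variance (using independence of the $g_{ij}$) and being slightly wasteful on the diagonal gives $\mathbf{E}(\langle v,Xv\rangle-\langle w,Xw\rangle)^2\le 2\sum_{i,j}b_{ij}^2(v_iv_j-w_iw_j)^2$. It therefore suffices to control this quantity. The key idea is to pass to the variables $p_i:=v_i-w_i$ and $q_i:=v_i+w_i$, and to exploit the elementary polarization-type identity $v_iv_j-w_iw_j=\tfrac12(p_iq_j+q_ip_j)$ together with $p_iq_i=v_i^2-w_i^2$. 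Squaring, summing against $b_{ij}^2$, and using the symmetry $b_{ij}=b_{ji}$ to identify the two cross sums, this produces
$$
	2\sum_{i,j}b_{ij}^2(v_iv_j-w_iw_j)^2=\sum_{i,j}b_{ij}^2 p_i^2 q_j^2+\sum_{i,j}(v_i^2-w_i^2)b_{ij}^2(v_j^2-w_j^2).
$$

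Next I would run the same computation on the right-hand side of the lemma. Writing $t_i:=(\sum_j b_{ij}^2 v_j^2)^{1/2}$ and $\tau_i:=(\sum_j b_{ij}^2 w_j^2)^{1/2}$, so that $x_i(v)=v_it_i$ and $x_i(w)=w_i\tau_i$, the same polarization gives $x_i(v)-x_i(w)=\tfrac12[p_i(t_i+\tau_i)+q_i(t_i-\tau_i)]$. Squaring, summing over $i$, and using $p_iq_i=v_i^2-w_i^2$ and $t_i^2-\tau_i^2=\sum_j b_{ij}^2(v_j^2-w_j^2)$ to evaluate the cross term, one obtains
$$
	4\|x(v)-x(w)\|^2=\sum_i p_i^2(t_i+\tau_i)^2+\sum_i q_i^2(t_i-\tau_i)^2+2\sum_{i,j}(v_i^2-w_i^2)b_{ij}^2(v_j^2-w_j^2).
$$
Substituting both identities into the inequality to be proved, the terms $\sum_{i,j}(v_i^2-w_i^2)b_{ij}^2(v_j^2-w_j^2)$ cancel exactly, and since $\sum_i q_i^2(t_i-\tau_i)^2\ge0$ the whole claim reduces to $\sum_{i,j}b_{ij}^2 p_i^2 q_j^2\le\sum_i p_i^2(t_i+\tau_i)^2$.

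Finally, this last inequality I would prove one row at a time: multiplying by $p_i^2\ge0$ and summing over $i$, it is enough to show $\sum_j b_{ij}^2 q_j^2\le(t_i+\tau_i)^2$ for each $i$, and after expanding $q_j^2=(v_j+w_j)^2$ this reads $t_i^2+2\sum_j b_{ij}^2 v_jw_j+\tau_i^2\le t_i^2+2t_i\tau_i+\tau_i^2$, i.e.\ exactly the Cauchy--Schwarz bound $\sum_j b_{ij}^2 v_jw_j\le(\sum_j b_{ij}^2 v_j^2)^{1/2}(\sum_j b_{ij}^2 w_j^2)^{1/2}=t_i\tau_i$. I expect the only delicate point to be purely organizational: verifying that the quadratic form $\sum_{i,j}(v_i^2-w_i^2)b_{ij}^2(v_j^2-w_j^2)$ (which need not have a sign, as $(b_{ij}^2)$ need not be positive semidefinite) appears with precisely the matching coefficient on both sides, so that the factor $4$ in front of $\|x(v)-x(w)\|^2$ and the correction $-\sum_{i,j}(v_i^2-w_i^2)b_{ij}^2(v_j^2-w_j^2)$ in the statement are exactly calibrated to make it disappear. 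Once the two polarization identities are in hand, there is no genuinely hard analytic step.
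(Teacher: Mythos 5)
Your proposal is correct and follows essentially the same route as the paper: both expand $\mathbf{E}\langle v+w,X(v-w)\rangle^2$ and $4\|x(v)-x(w)\|^2$ via the same polarization identities in $p=v-w$, $q=v+w$, observe that the quadratic form $\sum_{i,j}(v_i^2-w_i^2)b_{ij}^2(v_j^2-w_j^2)$ cancels, discard the nonnegative term $\sum_i q_i^2(\|v\|_i-\|w\|_i)^2$, and reduce to the row-wise bound $\sum_j b_{ij}^2 q_j^2\le(\|v\|_i+\|w\|_i)^2$. The paper phrases that last step as the triangle inequality for the seminorm $\|u\|_i=(\sum_j b_{ij}^2u_j^2)^{1/2}$, which is exactly the Cauchy--Schwarz bound you apply.
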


\begin{proof}
We simply compute both sides and compare.  Define for simplicity
the seminorm $\|\cdot\|_i$ as
$\|v\|_i^2 := \sum_{j=1}^n b_{ij}^2v_j^2$,
so that $x_i(v)=v_i\|v\|_i$.
First, we note that
\begin{align*}
	\mathbf{E}(\langle v,Xv\rangle-\langle w,Xw\rangle)^2 &=
	\mathbf{E}\langle v+w,X(v-w)\rangle^2 \\
	&=
	\sum_{i=1}^n(v_i-w_i)^2\|v+w\|_i^2 +
	\sum_{i,j=1}^n (v_i^2-w_i^2)b_{ij}^2(v_j^2-w_j^2).
\end{align*}
On the other hand, as $2(x_i(v)-x_i(w))=(v_i+w_i)(\|v\|_i-\|w\|_i)+
(v_i-w_i)(\|v\|_i+\|w\|_i)$,
\begin{align*}
	4\|x(v)-x(w)\|^2 &= 
	\sum_{i=1}^n (v_i+w_i)^2(\|v\|_i-\|w\|_i)^2 +
	\sum_{i=1}^n (v_i-w_i)^2(\|v\|_i+\|w\|_i)^2 \\
	&\qquad\mbox{}
	+2\sum_{i,j=1}^n (v_i^2-w_i^2)b_{ij}^2(v_j^2-w_j^2).	
\end{align*}
The result follows readily from the triangle inequality
$\|v+w\|_i\le\|v\|_i+\|w\|_i$.
\qed\end{proof}

We can now complete the proof of Theorem \ref{thm:gine}.

\begin{proof}[Proof of Theorem \ref{thm:gine}]
Define the Gaussian processes
$$
	X_v=\langle v,Xv\rangle,\qquad\quad
	Y_v = 2\langle x(v),g\rangle + \langle v^2,Y\rangle, 
$$
where $g\sim N(0,I)$ is a standard Gaussian vector in $\mathbb{R}^n$, 
$(v^2)_i:=v_i^2$, and $Y\sim N(0,B^-)$ is a centered Gaussian vector
that is independent of $g$ and whose
covariance matrix $B^-$ is the negative part of the matrix of 
variances $B=(b_{ij}^2)$ (if $B$ has eigendecomposition
$B=\sum_i\lambda_iv_iv_i^*$, the negative part $B^-$ is defined as
$B^-=\sum_i\max(-\lambda_i,0)v_iv_i^*$).
As $-B\preceq B^-$ by construction, it is readily seen that Lemma
\ref{lem:ginecomp} implies
$$
	\mathbf{E}(X_v-X_w)^2 \le
	4\|x(v)-x(w)\|^2 + \langle v^2-w^2,B^-(v^2-w^2)\rangle =
	\mathbf{E}(Y_v-Y_w)^2.
$$
We can therefore argue by the Slepian-Fernique inequality that
$$
	\mathbf{E}\|X\| \lesssim
	\mathbf{E}\sup_{v\in B}Y_v \le
	2\,\mathbf{E}\sup_{v\in B}\langle x(v),g\rangle +
	\mathbf{E}\max_{i\le n}Y_i
$$
as in the proof of Lemma \ref{lem:wigner}.  It remains to bound each term
on the right.

Let us begin with the second term. Using the moment method
as in section \ref{sec:mommeth}, one obtains the dimension-dependent bound
$\mathbf{E}\max_{i}Y_i\lesssim \max_i\mathrm{Var}(Y_i)^{1/2}\sqrt{\log n}$.
This bound is sharp when all the variances $\mathrm{Var}(Y_{i})=B_{ii}^-$ 
are of the same order, but can be suboptimal when many of the variances
are small.  Instead, we will use a sharp \emph{dimension-free} bound on 
the maximum of Gaussian random variables.

\begin{lemma}[Subgaussian maxima]
\label{lem:gaussmax}
Suppose that $g_1,\ldots,g_n$ satisfy 
$\mathbf{E}[|g_i|^k]^{1/k}\lesssim\sqrt{k}$ for all $k,i$, 
and let $\sigma_1,\ldots,\sigma_n\ge 0$.  Then we have
$$
	\mathbf{E}\max_{i\le n}|\sigma_ig_i| \lesssim
	\max_{i\le n}\sigma_i\sqrt{\log(i+1)}.
$$
\end{lemma}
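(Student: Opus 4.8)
The plan is to sort the variances and exploit the elementary bound that the $k$-th largest of a decreasing nonnegative sequence is at most the average of the first $k$ terms — the same trick used above to deduce Lata\l{}a's inequality from Theorem \ref{thm:gine}. First I would relabel the indices so that $\sigma_1 \ge \sigma_2 \ge \cdots \ge \sigma_n$; since both sides of the claimed inequality are invariant under permutation of the pairs $(\sigma_i, g_i)$ (the left side trivially, the right side because this ordering minimizes the maximum over $i$ of $\sigma_i\sqrt{\log(i+1)}$), this is without loss of generality. Write $M := \max_{i\le n}\sigma_i\sqrt{\log(i+1)}$ for the quantity we wish to dominate; the ordering gives $\sigma_i \le M/\sqrt{\log(i+1)}$ for every $i$.

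The main step is a union-bound / layer-cake estimate. For any threshold $t > 0$ I would write
$$
	\mathbf{E}\max_{i\le n}|\sigma_i g_i| \le t + \int_t^\infty
	\mathbf{P}\Big[\max_{i\le n}|\sigma_i g_i| > u\Big]\,du
	\le t + \sum_{i=1}^n \int_t^\infty \mathbf{P}[|g_i| > u/\sigma_i]\,du.
$$
The moment hypothesis $\mathbf{E}[|g_i|^k]^{1/k}\lesssim\sqrt{k}$ is exactly a subgaussian tail condition: by Markov applied to the optimal moment (or equivalently Lemma \ref{lem:gaussmom}-type reasoning) one gets $\mathbf{P}[|g_i|>s]\le C e^{-c s^2}$ for universal $c,C>0$. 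Substituting $s = u/\sigma_i$ and using $\sigma_i^2 \le M^2/\log(i+1)$ yields $\mathbf{P}[|g_i|>u/\sigma_i]\le C e^{-c u^2 \log(i+1)/M^2} = C (i+1)^{-c u^2/M^2}$. Now choose $t = C' M$ with $C'$ large enough that $c t^2/M^2 = cC'^2 > 2$, say. Then for $u \ge t$ the exponent $c u^2/M^2 \ge cC'^2 > 2$, so $\sum_{i=1}^n (i+1)^{-cu^2/M^2} \le \sum_{i\ge 1}(i+1)^{-2} < \infty$, and moreover one has the sharper bound $\sum_i (i+1)^{-\alpha} \lesssim 2^{-\alpha} + \int_1^\infty (x+1)^{-\alpha}dx \lesssim \frac{1}{\alpha-1}2^{-\alpha}$ for $\alpha > 1$. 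Carrying the $u$-integral through against this geometric-type decay in $u^2$ gives $\int_t^\infty C\sum_i(i+1)^{-cu^2/M^2}du \lesssim M$, and combining with the leading $t = C'M$ term produces $\mathbf{E}\max_i |\sigma_i g_i| \lesssim M$, as desired.

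The step I expect to require the most care is making the interchange of the sum over $i$ and the integral over $u$ actually produce a bound of order $M$ rather than of order $M\sqrt{\log n}$: a naive union bound with a fixed threshold $t \asymp M$ followed by $\sum_{i\le n}$ of a constant tail would reintroduce the dimension. The point is that the tail $\mathbf{P}[|g_i|>u/\sigma_i]$ decays \emph{polynomially in $i$ with exponent proportional to $u^2/M^2$}, precisely because $\sigma_i \lesssim M/\sqrt{\log(i+1)}$; this is what makes $\sum_i$ convergent with a bound independent of $n$ once $u \gtrsim M$, and it is also why one should integrate the tail rather than bound $\mathbf{P}[\max_i|\sigma_i g_i|>t]$ at a single scale. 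One small bookkeeping point: the hypothesis is stated for all $k$, so one should pick $k \asymp u^2/\sigma_i^2$ in the Markov step to extract the Gaussian tail with the right constant; this is routine given Lemma \ref{lem:gaussmom}. Everything else is elementary manipulation of the resulting elementary integrals.
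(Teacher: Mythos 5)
Your proposal is correct and follows essentially the same route as the paper: union bound, a Markov-type tail estimate extracted from the moment hypothesis, polynomial-in-$i$ decay of the tail coming from $\sigma_i\le M/\sqrt{\log(i+1)}$, and then integration of the tail. Two tiny remarks: the sorting step at the outset is harmless but unnecessary, since $\sigma_i\sqrt{\log(i+1)}\le M$ already holds for every $i$ by definition of $M$ regardless of ordering; and where you optimize the moment $k\asymp u^2/\sigma_i^2$ to produce an explicit subgaussian tail, the paper instead fixes $k=2\log(i+1)$ per index $i$ and normalizes by $\max_i\sigma_i\sqrt{2\log(i+1)}$, which leads to slightly slicker bookkeeping but is the same idea.
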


\begin{proof}
By a union bound and Markov's inequality
$$
	\mathbf{P}\bigg[\max_{i\le n}|\sigma_ig_i| \ge t\bigg] \le
	\sum_{i=1}^n\mathbf{P}[|\sigma_ig_i| \ge t] \le
	\sum_{i=1}^n \bigg(
	\frac{\sigma_i\sqrt{2\log(i+1)}}{t}
	\bigg)^{2\log(i+1)}.
$$
But we can estimate
$$
	\sum_{i=1}^\infty s^{-2\log(i+1)} = 
	\sum_{i=1}^\infty (i+1)^{-2}(i+1)^{-2\log s+2}
	\le
	2^{-2\log s+2}
	\sum_{i=1}^\infty (i+1)^{-2}
	\lesssim 
	s^{-2\log 2}
$$
as long as $\log s>1$. Setting $s=
t/\max_i\sigma_i\sqrt{2\log(i+1)}$, we obtain
\begin{align*}
	\mathbf{E}\max_{i\le n}|\sigma_ig_i| &=
	\max_i\sigma_i\sqrt{2\log(i+1)}
	\int_0^\infty 
	\mathbf{P}\bigg[\max_{i\le n}|\sigma_ig_i| \ge s
	\max_i\sigma_i\sqrt{2\log(i+1)}\bigg]\,ds
	\\
	&\lesssim
	\max_i\sigma_i\sqrt{2\log(i+1)}\,
	\bigg(e +
	\int_{e}^\infty s^{-2\log 2}ds\bigg)
	\lesssim
	\max_i\sigma_i\sqrt{\log(i+1)},
\end{align*}
which completes the proof.
\qed\end{proof}

\begin{remark}
\label{rem:gaussmax}
Lemma \ref{lem:gaussmax} does not require the variables $g_i$ to be
either independent or Gaussian.  However, if $g_1,\ldots,g_n$ are
independent standard Gaussian variables (which satisfy 
$\mathbf{E}[|g_i|^k]^{1/k}\lesssim\sqrt{k}$ by Lemma \ref{lem:gaussmom})
and if $\sigma_1\ge\sigma_2\ge\cdots\ge\sigma_n>0$ (which is the ordering
that optimizes the bound of Lemma \ref{lem:gaussmax}),
then 
$$
	\mathbf{E}\max_{i\le n}|\sigma_ig_i| \asymp
	\max_{i\le n}\sigma_i\sqrt{\log(i+1)},
$$
cf.\ \cite{vH15}.
This shows that Lemma \ref{lem:gaussmax} captures precisely the dimension-free
behavior of the maximum of independent centered Gaussian variables.
\end{remark}

To estimate the second term in our bound on $\mathbf{E}\|X\|$,
note that $(B^-)^2\preceq B^2$ implies
$$
	\mathrm{Var}(Y_i)^2 =
	(B_{ii}^-)^2 \le 
	(B^-)^2_{ii} \le
	(B^2)_{ii} =
	\sum_{j=1}^n (B_{ij})^2 =
	\sum_{j=1}^n b_{ij}^4.
$$
Applying Lemma \ref{lem:gaussmax} with $g_i=Y_i/\mathrm{Var}(Y_i)^{1/2}$
yields the bound
$$
	\mathbf{E}\max_{i\le n}Y_i \lesssim
	\max_{i\le n}
        \Bigg(\sum_{j=1}^n b_{ij}^4\Bigg)^{1/4}\sqrt{\log(i+1)}.
$$
Now let us estimate the first term in our bound on $\mathbf{E}\|X\|$.
Note that
$$
	\sup_{v\in B}\langle x(v),g\rangle =
	\sup_{v\in B}\sum_{j=1}^n
	g_jv_j\sqrt{\sum_{i=1}^n v_i^2b_{ij}^2} \le
	\sup_{v\in B}\sqrt{\sum_{i,j=1}^n
	v_i^2b_{ij}^2g_j^2} =
	\max_{i\le n}\sqrt{\sum_{j=1}^n b_{ij}^2g_j^2},
$$
where we used Cauchy-Schwarz and the fact that $v^2$ is in the $\ell_1$-ball
whenever $v$ is in the $\ell_2$-ball.  We can therefore estimate,
using Lemma \ref{lem:gaussconc} and Lemma \ref{lem:gaussmax},
\begin{align*}
	\mathbf{E}\sup_{v\in B}\langle x(v),g\rangle &\le
	\max_{i\le n}\mathbf{E}\sqrt{\sum_{j=1}^n b_{ij}^2g_j^2}
	+
	\mathbf{E}\max_{i\le n}\Bigg|
	\sqrt{\sum_{j=1}^n b_{ij}^2g_j^2}-
	\mathbf{E}\sqrt{\sum_{j=1}^n b_{ij}^2g_j^2}
	\Bigg|
	\\
	&\lesssim
	\max_{i\le n}\sqrt{\sum_{j=1}^nb_{ij}^2} +
	\max_{i,j\le n}b_{ij}\sqrt{\log(i+1)}.
\end{align*}
Putting everything together gives
$$
	\mathbf{E}\|X\| \lesssim
	\max_{i\le n}\sqrt{\sum_{j=1}^nb_{ij}^2} +
	\max_{i,j\le n}b_{ij}\sqrt{\log(i+1)} +
	\max_{i\le n}
        \Bigg(\sum_{j=1}^n b_{ij}^4\Bigg)^{1/4}\sqrt{\log(i+1)}.
$$
It is not difficult to simplify this (at the expense of a larger
universal constant) to obtain the bound in the statement of Theorem
\ref{thm:gine}.
\qed\end{proof}


\subsection{A sharp dimension-dependent bound}
\label{sec:bvh}

The approach developed in the previous section yields optimal results 
for many structured random matrices with independent entries. The 
crucial improvement of Theorem \ref{thm:gine} over the noncommutative 
Khintchine inequality is that no logarithmic factor appears in the first
term.  Therefore, when this term dominates, Theorem \ref{thm:gine} is 
sharp by Lemma \ref{lem:lowernck}. However, the second term in Theorem 
\ref{thm:gine} is not quite sharp, as is illustrated in Example 
\ref{ex:block}.  While Theorem \ref{thm:gine} captures much of the 
geometry of the underlying model, there remains some residual 
inefficiency in the proof.

In this section, we will develop an improved version of Theorem 
\ref{thm:gine} that is essentially sharp (in a sense that will be made 
precise below). Unfortunately, it is not known at present how such a 
bound can be obtained using the random process method, and we revert 
back to the moment method in the proof. The price we pay for this is 
that we lose the dimension-free nature of Theorem \ref{thm:gine}.

\begin{theorem}[\cite{BvH16}]
\label{thm:bvh}
In the setting of this section, we have
$$
	\mathbf{E}\|X\| \lesssim
	\max_{i\le n}\sqrt{\sum_{j=1}^n b_{ij}^2} +
	\max_{i,j\le n}b_{ij}\sqrt{\log n}.
$$
\end{theorem}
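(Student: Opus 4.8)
The plan is to prove Theorem \ref{thm:bvh} via the moment method, i.e.\ by bounding $\mathbf{E}[\mathrm{Tr}[X^{2p}]]$ for $p\asymp\log n$ and invoking Lemma \ref{lem:mommeth}. The starting point is the standard expansion of the matrix moment as a sum over closed walks:
$$
	\mathbf{E}[\mathrm{Tr}[X^{2p}]] =
	\sum_{i_0,i_1,\ldots,i_{2p-1}}
	\mathbf{E}[X_{i_0i_1}X_{i_1i_2}\cdots X_{i_{2p-1}i_0}]
	=
	\sum_{\text{walks } W}
	\prod_{e} b_e^{m_e(W)},
$$
where the walk $W=(i_0,i_1,\ldots,i_{2p-1},i_0)$ is a closed walk of length $2p$ on the complete graph on $\{1,\ldots,n\}$, $m_e(W)$ is the number of times the (undirected) edge $e$ is traversed, and the Gaussian expectation vanishes unless every edge is traversed an even number of times (so $m_e(W)$ is always even for the surviving terms). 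The two structural parameters
$$
	\sigma_1 := \max_{i\le n}\sqrt{\sum_{j=1}^n b_{ij}^2},
	\qquad
	\sigma_* := \max_{i,j\le n}b_{ij}
$$
must be extracted from this combinatorial sum.

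\textbf{The key steps}, in order. First, I would organize the sum over walks by the combinatorial type of the walk: which vertices are visited, in what pattern of repeats, and the multiplicity of each edge. The essential idea — going back to the classical proof of the semicircle law, but here carried out with inhomogeneous weights — is that the dominant contribution comes from walks in which every edge is traversed \emph{exactly twice}, and whose underlying graph is a tree; such walks are in correspondence with rooted plane trees on $p$ edges (Catalan structures), and there are $\binom{2p}{p}\frac{1}{p+1}\le 4^p$ of them. Second, for such a ``tree-like'' walk, the weight $\prod_e b_e^2$ can be summed over the free labels one vertex at a time, starting from the leaves: each time we sum over the label of a degree-one vertex $j$ attached to its parent $i$, we pick up a factor $\sum_j b_{ij}^2\le\sigma_1^2$, and there are $p$ edges, so the total contribution of tree-like walks is at most $n\cdot 4^p\cdot\sigma_1^{2p}$ (the factor $n$ for the choice of root). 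This already yields the $\sigma_1$ term: $(n\cdot 4^p\sigma_1^{2p})^{1/2p}\asymp\sigma_1$ when $p\asymp\log n$. Third — and this is the crux — I would bound the contribution of all the \emph{non-tree-like} walks: those with a repeated vertex that creates a cycle, or with some edge of multiplicity $\ge 4$. The mechanism here is that every ``defect'' relative to the Catalan tree structure forces us to give up a summation over a free label (costing a factor of $n$, or really a factor that can be absorbed), but in exchange we can insert a factor of $\sigma_*$ for each of the ``surplus'' edge-traversals; carefully, each unit of excess in $\sum_e m_e(W) - 2p$ or each unit of cyclomatic defect buys a factor of $\sigma_*^2$ while costing at most a polynomial-in-$p$ combinatorial factor and no unbounded cost, once $p$ is at the logarithmic scale so that $n^{1/p}\asymp 1$. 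Summing the geometric-type series over the number of defects produces a bound of the form $\mathrm{poly}(p)\cdot n\cdot C^p(\sigma_1 + \sigma_*\sqrt{p})^{2p}$, and then Lemma \ref{lem:mommeth} with $p\asymp\log n$ delivers $\mathbf{E}\|X\|\lesssim\sigma_1 + \sigma_*\sqrt{\log n}$.

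\textbf{The main obstacle} is precisely the bookkeeping in the third step: making rigorous the claim that each combinatorial ``defect'' (a vertex revisit, a high-multiplicity edge, a non-planarity) can be charged a factor of $\sigma_*$ rather than merely $\sigma_1$, and that the number of walk-types with a given total defect grows only polynomially (not exponentially) in the defect size, so that the series converges. The classical Wigner-matrix argument ($b_{ij}\equiv 1$) hides this issue because there $\sigma_1 = \sqrt{n}$ and $\sigma_* = 1$, so one only ever needs to track the \emph{number} of free vertex labels; in the inhomogeneous setting one must simultaneously track the vertex-label summations (governed by $\sigma_1$) and the ``excess'' edge factors (governed by $\sigma_*$), and these interact. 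The cleanest route is to encode each surviving walk by its \emph{reduced} structure — collapse each maximal edge-disjoint tree part and record the remaining graph together with a ``profile'' of how the walk traverses it — and to show that (i) summing out the tree parts gives powers of $\sigma_1^2$, (ii) each edge in the reduced graph, being traversed $\ge 2$ times with at least one ``surplus'' pair, contributes at most $\sigma_*^2$ per surplus pair after a worst-case bound, and (iii) the number of reduced structures with $r$ surplus pairs is at most $(Cp)^{O(r)}$, which is what makes the sum over $r$ converge and produces the $\sqrt{p}=\sqrt{\log n}$ in front of $\sigma_*$. I expect this to be the technically heaviest part of the argument, and the place where the non-Gaussian case would additionally require moment bounds on the entries to control walks with edges of multiplicity $\ge 4$.
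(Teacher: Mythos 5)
Your proposal is correct in outline and is essentially the strategy of the original reference \cite{BvH16} for the general case, but it is a genuinely different route from the one this chapter takes. The chapter first specializes to sparse Wigner matrices ($b_{ij}\in\{0,1\}$, the structure given by a graph $G$ of maximal degree $k$) and sidesteps the combinatorial bookkeeping you flag as the main obstacle by a \emph{dimension compression} comparison (Lemma~\ref{lem:compres}): since $c(\gamma)$ depends only on the shape of the cycle, one shows directly that for every even shape $\mathbf{s}$ of length $2p$ the number of embeddings of $\mathbf{s}$ in $G$ is at most $(n/r)$ times the number of embeddings of $\mathbf{s}$ in the complete graph $K_r$ with $r=k+p$, whence $\mathbf{E}[\mathrm{Tr}[X^{2p}]]\le (n/r)\,\mathbf{E}[\mathrm{Tr}[Y_r^{2p}]]$ for a Wigner matrix $Y_r$ of dimension $r=k+p$. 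Plugging in $p\asymp\log n$ and the known bound $\mathbf{E}\|Y_r\|\lesssim\sqrt{r}$ (Lemma~\ref{lem:wigner}) together with Gaussian concentration immediately yields $\sqrt{k}+\sqrt{\log n}$, with no classification of walks into tree-like and defect types at all. Your plan, by contrast, attacks the general $b_{ij}$ directly via the Füredi--Komlós / Vu style decomposition: Catalan tree contribution controlled by $\sigma_1$, defects charged with $\sigma_*$ and counted with polynomial-in-$p$ multiplicity. That is indeed what is required in the general case (and is worked out in \cite{BvH16}), but the crux you identify in your third step — that each defect buys a $\sigma_*^2$ at the cost of only a polynomial-in-$p$ enumeration, uniformly in the number of defects — is precisely the technically heavy argument, and your sketch does not yet establish it; in particular the uniform control ``number of reduced structures with $r$ surplus pairs is at most $(Cp)^{O(r)}$'' needs a careful encoding argument, and the interaction between giving up a free vertex label (which costs you a potential $\sigma_1^2$) and gaining a $\sigma_*^2$ has to be quantified so that the exchange is always favorable when $p\asymp\log n$. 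The chapter's comparison route avoids this entirely at the price of being stated only for the $\{0,1\}$ variance pattern, which is why it is presented as the pedagogically clean special case.
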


To understand why this result is sharp, let us recall (Remark 
\ref{rem:badmoment}) that the moment method necessarily bounds not the 
quantity $\mathbf{E}\|X\|$, but rather the larger quantity 
$\mathbf{E}[\|X\|^{\log n}]^{1/\log n}$.  The latter quantity is now 
however completely understood.

\begin{corollary}
\label{cor:bvhlower}
In the setting of this section, we have
$$
	\mathbf{E}[\|X\|^{\log n}]^{1/\log n} \asymp
	\max_{i\le n}\sqrt{\sum_{j=1}^n b_{ij}^2} +
	\max_{i,j\le n}b_{ij}\sqrt{\log n}.
$$
\end{corollary}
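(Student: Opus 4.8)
The plan is to derive Corollary \ref{cor:bvhlower} by combining the upper bound of Theorem \ref{thm:bvh} with a matching lower bound on the $\log n$-th moment of $\|X\|$. The upper direction is essentially immediate: by Theorem \ref{thm:bvh} we have $\mathbf{E}\|X\| \lesssim \max_i\sqrt{\sum_j b_{ij}^2} + \max_{ij}b_{ij}\sqrt{\log n}$, but we actually need the same bound for $\mathbf{E}[\|X\|^{\log n}]^{1/\log n}$. The cleanest route is to observe that Theorem \ref{thm:bvh} is really proved via the moment method (as the text announces), so its proof already yields the stronger conclusion $\mathbf{E}[\mathrm{Tr}[X^{2p}]]^{1/2p} \lesssim \text{(RHS)}$ for $p \asymp \log n$; by Lemma \ref{lem:mommeth} this gives $\mathbf{E}[\|X\|^{2p}]^{1/2p} \lesssim \text{(RHS)}$, and then Jensen's inequality (monotonicity of moments) bridges between $\|X\|^{\log n}$ and $\|X\|^{2p}$ since $p \asymp \log n$. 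Alternatively, one can invoke Gaussian concentration (Lemma \ref{lem:gaussconc}) applied to the $\sigma_*$-Lipschitz function $x \mapsto \|\sum x_k A_k\|$ as in Lemma \ref{lem:lowernck}: this shows $\mathbf{E}[\|X\|^{\log n}]^{1/\log n} \le \mathbf{E}\|X\| + C\sqrt{\log n}\,\sigma_*$ with $\sigma_* \lesssim \max_{ij}b_{ij}$, which is already absorbed into the second term on the right-hand side.

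For the lower bound, I would produce two separate lower bounds on $\mathbf{E}[\|X\|^{\log n}]^{1/\log n}$ and average them. First, since $\mathbf{E}[\|X\|^{\log n}]^{1/\log n} \ge \mathbf{E}\|X\|$ by Jensen, Lemma \ref{lem:lowernck} gives $\mathbf{E}[\|X\|^{\log n}]^{1/\log n} \gtrsim \|\mathbf{E}X^2\|^{1/2} = \max_i\sqrt{\sum_j b_{ij}^2}$, where the last identity holds because $\mathbf{E}X^2$ is diagonal with entries $\sum_j b_{ij}^2$ in the independent-entries model. Second, for the $\max_{ij}b_{ij}\sqrt{\log n}$ term I would use that $\|X\| \ge |X_{i^*j^*}|$ for the index pair achieving $b_{i^*j^*} = \max_{ij}b_{ij}$ (the spectral norm dominates any single matrix entry in absolute value, e.g.\ via $|X_{ij}| = |\langle e_i, X e_j\rangle| \le \|X\|$). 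Since $X_{i^*j^*} = b_{i^*j^*}g_{i^*j^*}$ is a centered Gaussian with standard deviation $b_{i^*j^*}$, we get $\mathbf{E}[\|X\|^{\log n}]^{1/\log n} \ge b_{i^*j^*}\,\mathbf{E}[|g|^{\log n}]^{1/\log n} \gtrsim \max_{ij}b_{ij}\sqrt{\log n}$, where the last step uses the two-sided Gaussian moment estimate $\mathbf{E}[|g|^p]^{1/p} \asymp \sqrt{p}$ (the upper bound is Lemma \ref{lem:gaussmom}; the matching lower bound $\mathbf{E}[|g|^p]^{1/p} \gtrsim \sqrt{p}$ follows from an elementary direct computation of Gaussian moments or from $\mathbf{E}[|g|^p] \ge \mathbf{P}[|g| \ge \sqrt{p}]\,p^{p/2} \gtrsim p^{p/2}$). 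Averaging, $\mathbf{E}[\|X\|^{\log n}]^{1/\log n} \gtrsim \tfrac12(\max_i\sqrt{\sum_j b_{ij}^2} + \max_{ij}b_{ij}\sqrt{\log n})$, which matches the upper bound up to constants.

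The main obstacle — really the only nontrivial point — is ensuring that the upper bound genuinely holds at the level of the $\log n$-th moment and not merely for $\mathbf{E}\|X\|$. If one treats Theorem \ref{thm:bvh} as a black box giving only a first-moment bound, one needs the concentration step to upgrade it; this is harmless here because the fluctuation term $\sqrt{\log n}\,\sigma_*$ is already of the same form as the second term in the bound, so it gets absorbed. I would state this explicitly as a short lemma: for any Gaussian $X = \sum_k g_k A_k$, one has $\mathbf{E}[\|X\|^p]^{1/p} \le \mathbf{E}\|X\| + C\sqrt{p}\,\|\mathbf{E}X^2\|^{1/2}_{\mathrm{op},*}$ where the fluctuation scale is controlled by $\sigma_* = \sup_{v\in B}(\sum_k \langle v, A_k v\rangle^2)^{1/2} \le \max_{ij}b_{ij}$ in the present model. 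Everything else is a routine assembly of Jensen's inequality, the two-sided Gaussian moment bounds, and the observation $\|X\| \ge \max_{ij}|X_{ij}|$.
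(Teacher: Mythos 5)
Your proposal is correct and follows essentially the same route as the paper: the upper bound is extracted from the moment-method proof of Theorem \ref{thm:bvh}, the first term of the lower bound comes from Lemma \ref{lem:lowernck} via Jensen, and the second term comes from the pointwise bound $\|X\|\ge|X_{i^*j^*}|$ together with $\mathbf{E}[|g|^p]^{1/p}\asymp\sqrt{p}$. Your alternative upgrade of the first-moment bound via Gaussian concentration (absorbing $\sigma_*\sqrt{\log n}\lesssim\max_{ij}b_{ij}\sqrt{\log n}$ into the second term) is a small variation the paper does not spell out but is equally valid.
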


\begin{proof}
The upper bound follows from the proof of Theorem \ref{thm:bvh}.
The first term on the right
is a lower bound by Lemma \ref{lem:lowernck}. On the other hand,
if $b_{kl}=\max_{i,j}b_{ij}$, then
$\mathbf{E}[\|X\|^{\log n}]^{1/\log n}\ge
\mathbf{E}[|X_{kl}|^{\log n}]^{1/\log n}\gtrsim b_{kl}\sqrt{\log n}$
as $\mathbf{E}[|X_{kl}|^p]^{1/p}\asymp b_{kl}\sqrt{p}$.
\qed\end{proof}

The above result shows that Theorem \ref{thm:bvh} is in fact the 
\emph{optimal} result that could be obtained by the moment method. This 
result moreover yields optimal bounds even for $\mathbf{E}\|X\|$ in 
almost all situations of practical interest, as it is true under mild 
assumptions that $\mathbf{E}\|X\|\asymp \mathbf{E}[\|X\|^{\log 
n}]^{1/\log n}$ (as will be discussed in section \ref{sec:open}). 
Nonetheless, this is not always the case, and will fail in particular 
for matrices whose variances are distributed over many different scales; 
in the latter case, the \emph{dimension-free} bound of Theorem 
\ref{thm:gine} can give rise to much sharper results.  Both Theorems 
\ref{thm:gine} and \ref{thm:bvh} therefore remain of significant 
independent interest.  Taken together, these results strongly support a 
fundamental conjecture, to be discussed in the next section, that would 
provide the ultimate understanding of the magnitude of the spectral norm 
of the random matrix model considered in this chapter.

The proof of Theorem \ref{thm:bvh} is completely different in nature 
than that of Theorem \ref{thm:gine}.  Rather than prove Theorem 
\ref{thm:bvh} in the general case, we will restrict attention in the 
rest of this section to the special case of \emph{sparse Wigner 
matrices}. The proof of Theorem \ref{thm:bvh} in the general case is 
actually no more difficult than in this special case, but the ideas and 
intuition behind the proof are particularly transparent when restricted 
to sparse Wigner matrices (which was how the authors of \cite{BvH16} 
arrived at the proof).  Once this special case has been understood, the 
reader can extend the proof to the general setting as an exercise, or 
refer to the general proof given in \cite{BvH16}.

\begin{example}[Sparse Wigner matrices]
\label{ex:sparse}
Informally, a sparse Wigner matrix is a symmetric random matrix with a 
given sparsity pattern, whose nonzero entries are independent standard 
Gaussian variables.  It is convenient to fix the sparsity pattern of the 
matrix by specifying a given undirected graph $G=([n],E)$ on $n$ 
vertices, whose adjacency matrix we denote as $B=(b_{ij})_{1\le 
i,j\le n}$. The corresponding sparse Wigner matrix $X$ is the symmetric 
random matrix whose entries are given by $X_{ij}=b_{ij}g_{ij}$, where 
$g_{ij}$ are independent standard Gaussian variables (up to symmetry 
$g_{ji}=g_{ij}$).  Clearly our previous Examples \ref{ex:diag},
\ref{ex:wigner}, and \ref{ex:block} are all special cases of this model.

For a sparse Wigner matrix, the first term
in Theorem \ref{thm:bvh} is precisely the maximal degree
$k=\mathrm{deg}(G)$ of the graph $G$, so that Theorem \ref{thm:bvh} reduces
to
$$
	\mathbf{E}\|X\| \lesssim \sqrt{k}+\sqrt{\log n}.
$$
We will see in section \ref{sec:open} that this bound is 
sharp for sparse Wigner matrices.
\end{example}

The remainder of this section is devoted to the proof of Theorem 
\ref{thm:bvh} in the setting of Example \ref{ex:sparse} (we fix
the notation introduced in this example in the sequel).
To understand the idea behind the proof, let us start by naively
writing out the central quantity that appears in moment method (Lemma \ref{lem:mommeth}):
we evidently have
\begin{align*}
	\mathbf{E}[\mathrm{Tr}[X^{2p}]] 
	&=
	\sum_{i_1,\ldots,i_{2p}=1}^n
	\mathbf{E}[X_{i_1i_2}X_{i_2i_3}\cdots X_{i_{2p-1}i_{2p}}
	X_{i_{2p}i_1}] \\
	&=
	\sum_{i_1,\ldots,i_{2p}=1}^n
	b_{i_1i_2}b_{i_2i_3}\cdots b_{i_{2p}i_1}\,
	\mathbf{E}[g_{i_1i_2}g_{i_2i_3}\cdots g_{i_{2p}i_1}].
\end{align*}
It is useful to think of $\gamma=(i_1,\ldots,i_{2p})$ geometrically
as a \emph{cycle}
$i_1\to i_2\to \cdots\to i_{2p}\to i_1$ of length $2p$.  The quantity
$b_{i_1i_2}b_{i_2i_3}\cdots b_{i_{2p}i_1}$ is equal to one precisely
when $\gamma$ defines a cycle in the graph $G$, and is zero otherwise.
We can therefore write
$$
	\mathbf{E}[\mathrm{Tr}[X^{2p}]] =
	\sum_{\mathrm{cycle}~\gamma~\mathrm{in}~G~\mathrm{of~length}~2p}
	c(\gamma),
$$
where we defined the constant
$c(\gamma):= \mathbf{E}[g_{i_1i_2}g_{i_2i_3}\cdots g_{i_{2p}i_1}]$.

It turns out that $c(\gamma)$ does not really depend on the the position 
of the cycle $\gamma$ in the graph $G$.  While we will not require a 
precise formula for $c(\gamma)$ in the proof, it is instructive 
to write down what it looks like.  For any cycle $\gamma$ in $G$, denote 
by $m_\ell(\gamma)$ the number of distinct edges in $G$ that are visited 
by $\gamma$ precisely $\ell$ times, and denote by 
$m(\gamma)=\sum_{\ell\ge 1}m_\ell(\gamma)$ the total number of distinct 
edges visited by $\gamma$.  Then 
$$
	c(\gamma) = \prod_{\ell=1}^\infty \mathbf{E}[g^\ell]^{m_\ell(\gamma)},
$$
where $g\sim N(0,1)$ is a standard Gaussian variable and we have used
the independence of the entries. From this formula, we read off two
important facts (which are the only ones that will actually be used 
in the proof):
\begin{itemize}
\item If any edge in $G$ is visited by $\gamma$ an odd number of times,
then $c(\gamma)=0$ (as the odd moments of $g$ vanish). Thus the only
cycles that matter are \emph{even} cycles, that is, cycles in which 
every distinct edge is visited an even number of times. 
\item $c(\gamma)$ depends on $\gamma$ only through the numbers
$m_\ell(\gamma)$.  Therefore, to compute $c(\gamma)$, we only need to
know the \emph{shape} $\mathbf{s}(\gamma)$ of the cycle $\gamma$.
\end{itemize}
The shape $\mathbf{s}(\gamma)$ is obtained from $\gamma$ by relabeling its
vertices in order of appearance; for example, the shape of the cycle
$7\to 3\to 9\to 7\to 3\to 9\to 7$ is given by
$1\to 2\to 3\to 1\to 2\to 3\to 1$.  The shape $\mathbf{s}(\gamma)$ captures the 
topological properties of $\gamma$ (such as the numbers $m_\ell(\gamma)=
m_\ell(\mathbf{s}(\gamma))$) without keeping track of the manner in which
$\gamma$ is embedded in $G$.  This is illustrated in the
following figure:
\begin{center}
\vskip.2cm
\begin{tikzpicture}
\draw[thick,gray,dashed] (0,0) to (1,0);
\draw[thick,gray,dashed] (1,0) to (0,1);
\draw[thick,gray,dashed] (0,1) to (0,0);
\draw[thick,gray,dashed] (0,0) to (-1,0);
\draw[thick,gray,dashed] (-1,0) to (-1,-1);
\draw[thick,gray,dashed] (-1,-1) to (0,-1);
\draw[thick,gray,dashed] (0,-1) to (0,0);
\draw[thick,gray,dashed] (1,0) to (2,0);
\draw[thick,gray,dashed] (0,1) to (0,2);
\draw[thick,gray,dashed] (-1,0) to (-2,1);
\draw[thick,gray,fill=gray] (0,0) circle (.075);
\draw[thick,gray,fill=gray] (1,0) circle (.075);
\draw[thick,gray,fill=gray] (0,1) circle (.075);
\draw[thick,gray,fill=gray] (-1,0) circle (.075);
\draw[thick,gray,fill=gray] (0,-1) circle (.075);
\draw[thick,gray,fill=gray] (-1,-1) circle (.075);
\draw[thick,gray,fill=gray] (0,2) circle (.075);
\draw[thick,gray,fill=gray] (-2,1) circle (.075);
\draw[thick,gray,fill=gray] (2,0) circle (.075);
\draw[thick,blue,->] (0,0.075) to (0,0.925);
\draw[thick,blue,->] (0.075,0.925) to (0.925,0.075);
\draw[thick,blue,->] (0.925,0) to (0.075,0);
\draw (0,0) node[below left] {$i_1$};
\draw (0,1) node[left] {$i_2$};
\draw (1,0) node[below] {$i_3$};
\draw[gray] (-1,2) node {$G$};
\draw[blue] (.75,.75) node {$\gamma$};

\draw[thick,blue,->] (5,0.075) to (5,0.925);
\draw[thick,blue,->] (5.075,0.925) to (5.925,0.075);
\draw[thick,blue,->] (5.925,0) to (5.075,0);
\draw[thick,blue,fill=blue] (5,0) circle (.075);
\draw[thick,blue,fill=blue] (6,0) circle (.075);
\draw[thick,blue,fill=blue] (5,1) circle (.075);
\draw (5,0) node[below] {$1$};
\draw (5,1) node[above] {$2$};
\draw (6,0) node[below] {$3$};
\draw[blue] (6,.75) node {$\mathbf{s}(\gamma)$};

\draw[->] (1.25,.75) to[out=30,in=200] (4.5,.5);
\end{tikzpicture}
\vskip.2cm
\end{center}
Putting together the above observations, we obtain the useful formula
$$
	\mathbf{E}[\mathrm{Tr}[X^{2p}]] =
	\sum_{\mathrm{shape}~\mathbf{s}~\mathrm{of~even~cycle~of~length}~2p}
	c(\mathbf{s})\times
	\#\{\mbox{embeddings of }\mathbf{s}\mbox{ in }G\}.
$$
So far, we have done nothing but bookkeeping. To use the above bound, 
however, we must get down to work and count the number of shapes of even 
cycles that can appear in the given graph $G$. The problem we face is 
that the latter proves to be a difficult combinatorial problem,
which is apparently completely intractable when presented with any given
graph $G$ that may possess an arbitrary structure (this is already highly 
nontrivial even in a complete graph when $p$ is large!) To squeeze anything
useful out of this bound, it is essential that we find a shortcut.

The solution to our problem proves to be incredibly simple. Recall that 
$G$ is a given graph of degree $\mathrm{deg}(G)=k$.  Of all graphs of 
degree $k$, which one will admit the most possible shapes? Obviously the 
graph that admits the most shapes is the one where every potential edge 
between two vertices is present; therefore, the graph of degree $k$ that 
possesses the most shapes is the \emph{complete graph on $k$ vertices}. 
From the random matrix point of view, the latter corrsponds to a Wigner 
matrix of dimension $k\times k$.  This simple idea suggests that rather 
than directly estimating the quantity $\mathbf{E}[\mathrm{Tr}[X^{2p}]]$ 
by combinatorial means, we should aim to prove a \emph{comparison 
principle} between the moments of the $n\times n$ sparse matrix $X$ and 
the moments of a $k\times k$ Wigner matrix $Y$, which we already know 
how to bound by Lemma \ref{lem:wigner}. Note that such a comparison 
principle is of a completely different nature than the Slepian-Fernique 
method used previously: here we are comparing two matrices 
of \emph{different dimension}. The intuitive idea is that a large sparse 
matrix can be ``compressed'' into a much lower dimensional dense matrix 
without decreasing its norm.

The alert reader will note that there is a problem with the above 
intuition. While the complete graph on $k$ points admits more shapes
than the original graph $G$, there are less potential ways in which each
shape can be embedded in the complete graph as the latter possesses less
vertices than the original graph. We can compensate for this
deficiency by slightly increasing the dimension of the complete graph.

\begin{lemma}[Dimension compression]
\label{lem:compres}
Let $X$ be the $n\times n$ sparse Wigner matrix (Example \ref{ex:sparse})
defined by a graph $G=([n],E)$ of maximal degree $\mathrm{deg}(G)=k$,
and let $Y_r$ be an $r\times r$ Wigner matrix (Example \ref{ex:wigner}).
Then, for every $p\ge 1$,
$$
	\mathbf{E}[\mathrm{Tr}[X^{2p}]] \le
	\frac{n}{k+p}\,\mathbf{E}[\mathrm{Tr}[Y_{k+p}^{2p}]].
$$
\end{lemma}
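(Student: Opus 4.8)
The plan is to run with the combinatorial expansion
$\mathbf{E}[\mathrm{Tr}[X^{2p}]] = \sum_{\mathbf s} c(\mathbf s)\,\#\{\text{embeddings of }\mathbf s\text{ in }G\}$
derived above, where the sum is over shapes of even closed walks of length $2p$, together with the identical expansion for $Y_r$. Since $Y_r$ is nothing but the sparse Wigner matrix attached to the complete graph $K_r$ on $r$ vertices (with a self-loop at every vertex, as the diagonal of $Y_r$ is nonzero), the same bookkeeping gives $\mathbf{E}[\mathrm{Tr}[Y_r^{2p}]] = \sum_{\mathbf s} c(\mathbf s)\,\#\{\text{embeddings of }\mathbf s\text{ in }K_r\}$, indexed by the very same shapes $\mathbf s$ and with the very same coefficients $c(\mathbf s)$. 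Each $c(\mathbf s)$ is a product of even moments of a standard Gaussian, hence nonnegative, so it suffices to prove, for every shape $\mathbf s$ of an even closed walk of length $2p$, the purely combinatorial bound
$$
	\#\{\text{embeddings of }\mathbf s\text{ in }G\} \le
	\frac{n}{k+p}\,\#\{\text{embeddings of }\mathbf s\text{ in }K_{k+p}\}.
$$
Shapes not realizable in $G$ contribute $0$ on the left and cause no trouble.

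The one genuinely important observation is that a shape $\mathbf s$ of an even closed walk of length $2p$ has at most $p+1$ distinct vertices: every distinct edge of the walk is traversed an even, hence at least twice, number of times, so the walk uses at most $p$ distinct edges, and these edges span a connected graph on at most $p+1$ vertices. Write $v=v(\mathbf s)$ for this number of vertices, so $v-1\le p$. Counting embeddings into $K_{k+p}$ is then immediate: since every pair of vertices is joined, an embedding is simply an injection of the $v$ vertices of $\mathbf s$ into $[k+p]$, of which there are exactly $(k+p)(k+p-1)\cdots(k+p-v+1)$ (a positive quantity, as $v\le p+1\le k+p$ when $k\ge 1$, the case $k=0$ being trivial). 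To count embeddings into $G$ I would place the images of the vertices of $\mathbf s$ one at a time in order of first appearance along the walk: the first vertex has at most $n$ possible images, and when any later vertex is placed it must be adjacent in $G$ to an already-placed vertex — namely the one immediately preceding its first occurrence in the walk — which leaves at most $\mathrm{deg}(G)=k$ choices. Hence there are at most $n\,k^{v-1}$ embeddings of $\mathbf s$ into $G$, and the displayed inequality reduces to $k^{v-1}\le(k+p-1)(k+p-2)\cdots(k+p-v+1)$. This holds because each of the $v-1$ factors on the right is at least $k+p-(v-1)\ge k$, using precisely $v-1\le p$; summing the per-shape inequalities against the weights $c(\mathbf s)\ge 0$ completes the argument.

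The point to get right — and the reason the dimension is inflated by exactly $p$ — is the bound $v\le p+1$ on the number of distinct vertices of a shape, which is what allows every factor $k+p-j$ in the $K_{k+p}$ count to absorb one factor $k$ from the crude degree bound for $G$. Everything else is a careful but routine translation between walks, shapes and embeddings, for which the groundwork has already been laid; the only mild nuisances are self-loops (harmless, since a self-loop step never introduces a new vertex and only enlarges the Wigner side) and keeping track of injectivity of the embeddings into $G$ (which can only lower the count, so the upper bound $n\,k^{v-1}$ is safe).
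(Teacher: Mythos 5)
Your proof is correct and follows essentially the same route as the paper: expand both traces over shapes of even closed walks, bound the number of distinct vertices of such a shape by $p+1$, count embeddings into $K_{k+p}$ exactly as $(k+p)(k+p-1)\cdots(k+p-v+1)$ and into $G$ crudely as $nk^{v-1}$, and observe that each factor after the first in the $K_{k+p}$ count is at least $k+p-(v-1)\ge k$. The only additions beyond the paper's argument are the (correct but routine) remarks about self-loops and injectivity, which the paper leaves implicit.
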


\begin{proof}
Let $\mathbf{s}$ be the shape of an even cycle of length $2p$, and let 
$K_r$ be the complete graph on $r>p$ points.  Denote by $m(\mathbf{s})$ 
the number of distinct vertices in $\mathbf{s}$, and note that 
$m(\mathbf{s})\le p+1$ as every distinct edge in $\mathbf{s}$ must 
appear at least twice.  Thus
$$
	\#\{\mbox{embeddings of }\mathbf{s}\mbox{ in }K_r\} =
	r(r-1)\cdots(r-m(\mathbf{s})+1),
$$
as any assignment of
vertices of $K_r$ to the distinct vertices of $\mathbf{s}$ defines
a valid embedding of $\mathbf{s}$ in the complete graph.
On the other hand, to count the number of embeddings of
$\mathbf{s}$ in $G$, note that we have as many as $n$ choices for
the first vertex, while each subsequent vertex can be chosen in at
most $k$ ways (as $\mathrm{deg}(G)=k$).  Thus
$$
	\#\{\mbox{embeddings of }\mathbf{s}\mbox{ in }G\} \le
	nk^{m(\mathbf{s})-1}.
$$
Therefore, if we choose $r=k+p$, we have
$r-m(\mathbf{s})+1 \ge r-p \ge k$, so that
$$
	\#\{\mbox{embeddings of }\mathbf{s}\mbox{ in }G\} \le
	\frac{n}{r}\,
	\#\{\mbox{embeddings of }\mathbf{s}\mbox{ in }K_r\}.
$$
The proof now follows from the combinatorial expression for $\mathbf{E}[\mathrm{Tr}[X^{2p}]]$.
\qed\end{proof}

With Lemma \ref{lem:compres} in hand, it is now straightforward to complete
the proof of Theorem \ref{thm:bvh} for the sparse Wigner matrix model of
Example \ref{ex:sparse}.

\begin{proof}[Proof of Theorem \ref{thm:bvh} in the setting of Example
\ref{ex:sparse}]
We begin by noting that
$$
	\mathbf{E}\|X\| 
	\le
	\mathbf{E}[\|X\|^{2p}]^{1/2p}
	\le
	n^{1/2p}\, 
	\mathbf{E}[\|Y_{k+p}\|^{2p}]^{1/2p}
$$
by Lemma \ref{lem:compres}, where we used
$\|X\|^{2p}\le\mathrm{Tr}[X^{2p}]$ and $\mathrm{Tr}[Y_r^{2p}] \le
r\|Y_r\|^{2p}$.
Thus
\begin{align*}
	\mathbf{E}\|X\| &\lesssim
	\mathbf{E}[\|Y_{k+\lfloor\log n\rfloor}\|^{2\log n}]^{1/2\log n} \\
	&\le
	\mathbf{E}\|Y_{k+\lfloor\log n\rfloor}\| +
	\mathbf{E}[(\|Y_{k+\lfloor\log n\rfloor}\|-
	\mathbf{E}\|Y_{k+\lfloor\log n\rfloor}\|)^{2\log n}]^{1/2\log n}
	\\
	&\lesssim
	\sqrt{k+\log n} + \sqrt{\log n},
\end{align*}
where in the last inequality we used Lemma \ref{lem:wigner} to bound 
the first term and Lemma \ref{lem:gaussconc} to bound the second term.
Thus $\mathbf{E}\|X\|\lesssim\sqrt{k}+\sqrt{\log n}$, completing
the proof.
\qed\end{proof}

\subsection{Three conjectures}
\label{sec:open}

We have obtained in the previous sections two remarkably sharp bounds on 
the spectral norm of random matrices with independent centered Gaussian 
entries: the slightly suboptimal dimension-free bound of Theorem 
\ref{thm:gine} for $\mathbf{E}\|X\|$, and the sharp dimension-dependent 
bound of Theorem \ref{thm:bvh} for $\mathbf{E}[\|X\|^{\log n}]^{1/\log 
n}$.  As we will shortly argue, the latter bound is also sharp for 
$\mathbf{E}\|X\|$ in almost all situations of practical interest. 
Nonetheless, we cannot claim to have a complete understanding of the 
mechanisms that control the spectral norm of Gaussian random matrices 
unless we can obtain a sharp dimension-free bound on $\mathbf{E}\|X\|$.
While this problem remains open, the above results strongly suggest
what such a sharp bound should look like.

To gain some initial intuition, let us complement the sharp lower bound of
Corollary \ref{cor:bvhlower} for $\mathbf{E}[\|X\|^{\log n}]^{1/\log n}$
by a trivial lower bound for $\mathbf{E}\|X\|$.

\begin{lemma}
\label{lem:sharplower}
In the setting of this section, we have
$$
	\mathbf{E}\|X\|\gtrsim
	\max_{i\le n}\sqrt{\sum_{j=1}^n b_{ij}^2} +
	\mathbf{E}\max_{i,j\le n}|X_{ij}|.
$$
\end{lemma}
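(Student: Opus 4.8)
The plan is to establish each of the two terms on the right-hand side as a lower bound for $\mathbf{E}\|X\|$ separately, and then combine them by averaging the two bounds, which costs only a factor $2$.

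For the first term I would simply invoke the lower bound already contained in Lemma~\ref{lem:lowernck}. The present model $X_{ij}=b_{ij}g_{ij}$ is a special case of the general setting $X=\sum_k g_kA_k$ considered there (take the $A_k$ to be suitably scaled copies of $e_ie_i^*$ on the diagonal and of $e_ie_j^*+e_je_i^*$ above the diagonal), so Lemma~\ref{lem:lowernck} gives $\mathbf{E}\|X\|\gtrsim\|\mathbf{E}X^2\|^{1/2}$. The only computation needed is to identify $\mathbf{E}X^2$: because the entries are independent and centered, the off-diagonal entries of $\mathbf{E}X^2$ vanish and $(\mathbf{E}X^2)_{ii}=\sum_j b_{ij}^2$, whence $\|\mathbf{E}X^2\|^{1/2}=\max_i\sqrt{\sum_j b_{ij}^2}$.

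For the second term the key (entirely trivial) observation is that every entry of a matrix is dominated in absolute value by its spectral norm: $|X_{ij}|=|\langle e_i,Xe_j\rangle|\le\|X\|$ for all $i,j$. Taking the maximum over $i,j$ and then the expectation yields $\mathbf{E}\max_{i,j}|X_{ij}|\le\mathbf{E}\|X\|$, with no constant lost. Combining the two, one writes $\mathbf{E}\|X\|=\tfrac12\mathbf{E}\|X\|+\tfrac12\mathbf{E}\|X\|\ge\tfrac12\max_i\sqrt{\sum_j b_{ij}^2}+\tfrac12\mathbf{E}\max_{i,j}|X_{ij}|$, which is the asserted inequality.

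I do not expect any genuine obstacle here: the lemma is elementary, and the only things to notice are the identification of $\mathbf{E}X^2$ as a diagonal matrix and the entrywise bound $|X_{ij}|\le\|X\|$. The real purpose of the statement is that, taken together with the upper bounds of Theorems~\ref{thm:gine} and~\ref{thm:bvh}, it pins down the form that a conjecturally sharp dimension-free bound on $\mathbf{E}\|X\|$ should take, which is the subject of the following discussion.
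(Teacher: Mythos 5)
Your proposal is correct and is essentially identical to the paper's own (very short) proof: the first term comes from Lemma~\ref{lem:lowernck} via the identification $\|\mathbf{E}X^2\|=\max_i\sum_j b_{ij}^2$, and the second from the pointwise bound $\|X\|\ge\max_{i,j}|X_{ij}|$, with the two combined by averaging. The only difference is that you spell out the routine computation of $\mathbf{E}X^2$, which the paper takes as given.
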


\begin{proof}
The first term is a lower bound by Lemma \ref{lem:lowernck}, while the
second term is a lower bound by the trivial pointwise inequality
$\|X\|\ge \max_{i,j}|X_{ij}|$.
\qed\end{proof}

The simplest possible upper bound on the maximum of centered Gaussian 
random variables is $\mathbf{E}\max_{i,j}|X_{ij}|\lesssim 
\max_{i,j}b_{ij}\sqrt{\log n}$, which is sharp for i.i.d.\ Gaussian 
variables.  Thus the lower bound of Lemma \ref{lem:sharplower} matches 
the upper bound of Theorem \ref{thm:bvh} under a minimal homogeneity 
assumption: it suffices to assume that the number of entries whose 
standard deviation $b_{kl}$ is of the same order as $\max_{i,j}b_{ij}$ 
grows polynomially with dimension (which still allows for a vanishing 
fraction of entries of the matrix to possess large variance). For 
example, in the sparse Wigner matrix model of Example \ref{ex:sparse}, 
every row of the matrix that does not correspond to an isolated vertex 
in $G$ contains at least one entry of variance one. Therefore, if $G$ 
possesses no isolated vertices, there are at least $n$ entries of $X$
with variance one, and it follows immediately from Lemma \ref{lem:sharplower}
that the bound of Theorem \ref{thm:bvh} is sharp for sparse Wigner matrices.
(There is no loss of generality in assuming that $G$ has no isolated vertices:
any isolated vertex yields a row that is identically zero, so we can simply
remove such vertices from the graph without changing the norm.)

However, when the variances of the entries of $X$ possess many different scales,
the dimension-dependent upper bound $\mathbf{E}\max_{i,j}|X_{ij}|\lesssim
\max_{i,j}b_{ij}\sqrt{\log n}$ can fail to be sharp. To obtain a sharp
bound on the maximum of Gaussian random variables, we must proceed in a
dimension-free fashion as in Lemma \ref{lem:gaussmax}.  In particular,
combining Remark \ref{rem:gaussmax} and Lemma \ref{lem:sharplower}
yields the following explicit lower bound:
$$
	\mathbf{E}\|X\|\gtrsim
	\max_{i\le n}\sqrt{\sum_{j=1}^n b_{ij}^2} +
	\max_{i,j\le n} b_{ij}\sqrt{\log i},
$$
provided that $\max_jb_{1j}\ge \max_jb_{2j} \ge\cdots\ge
\max_jb_{nj}>0$ (there is no loss of generality in assuming the latter,
as we can always permute the rows and columns of $X$ to achieve this
ordering without changing the norm of $X$).
It will not have escaped the attention of
the reader that the latter lower bound is tantalizingly close both
to the dimension-dependent upper bound of Theorem \ref{thm:bvh},
and to the dimension-free upper bound of Theorem \ref{thm:gine}.
This leads us to the following very natural conjecture \cite{vH15}.

\begin{conj}
\label{conj:1}
Assume without loss of generality that the rows and columns of $X$ have
been permuted such that $\max_jb_{1j}\ge \max_jb_{2j} \ge\cdots\ge
\max_jb_{nj}>0$.  Then
\begin{align*}
	\mathbf{E}\|X\| &\asymp
	\|\mathbf{E}X^2\|^{1/2} + \mathbf{E}\max_{i,j\le n}|X_{ij}| \\
	&\asymp
	\max_{i\le n}\sqrt{\sum_{j=1}^n b_{ij}^2} +
	\max_{i,j\le n} b_{ij}\sqrt{\log i}.
\end{align*}
\end{conj}

Conjecture \ref{conj:1} appears completely naturally from our results, 
and has a surprising interpretation.  There are two simple mechanisms 
that would certainly force the random matrix $X$ to have large expected 
norm $\mathbf{E}\|X\|$: the matrix $X$ is can be large ``on average'' in 
the sense that $\|\mathbf{E}X^2\|$ is large (note that the expectation 
here is \emph{inside} the norm), or the matrix $X$ can have an entry 
that exhibits a large fluctuation in the sense that $\max_{i,j}X_{ij}$ 
is large.  Conjecture \ref{conj:1} suggests that these two mechanisms 
are, in a sense, the \emph{only} reasons why $\mathbf{E}\|X\|$ can be 
large. 

Given the remarkable similarity between Conjecture \ref{conj:1} and 
Theorem \ref{thm:bvh}, one might hope that a slight sharpening of the 
proof of Theorem \ref{thm:bvh} would suffice to yield the conjecture. 
Unfortunately, it seems that the moment method is largely useless for 
the purpose of obtaining dimension-free bounds: indeed, the Corollary 
\ref{cor:bvhlower} shows that the moment method is already exploited 
optimally in the proof of Theorem \ref{thm:bvh}.  While it is sometimes 
possible to derive dimension-free results from dimension-dependent 
results by a stratification procedure, such methods either fail 
completely to capture the correct structure of the problem (cf.\ 
\cite{RS13}) or retain a residual dimension-dependence (cf.\ 
\cite{vH15}).  It therefore seems likely that random process methods
will prove to be essential for progress in this direction.

While Conjecture \ref{conj:1} appears completely natural in the present 
setting, we should also discuss a competing conjecture that was proposed 
much earlier by R.\ Lata{\l}a.  Inspired by certain results of Seginer 
\cite{Seg00} for matrices with i.i.d.\ entries, Lata{\l}a conjectured 
the following sharp bound in the general setting of this section.

\begin{conj}
\label{conj:2}
In the setting of this section, we have
$$
	\mathbf{E}\|X\|\asymp
	\mathbf{E}\max_{i\le n}\sqrt{\sum_{j=1}^nX_{ij}^2}.
$$
\end{conj}

As $\|X\|^2\ge \max_i\sum_j X_{ij}^2$ holds deterministically, the lower
bound in Conjecture \ref{conj:2} is trivial: it states that a matrix
that possesses a large row must have large spectral norm.  Conjecture
\ref{conj:2} suggests that this is the \emph{only} reason why the
matrix norm can be large.  This is certainly not the case for an
arbitrary matrix $X$, and so it is not at all clear \emph{a priori} why
this should be true. Nonetheless, no counterexample is known in the
setting of the Gaussian random matrices considered in this section.

While Conjectures \ref{conj:1} and \ref{conj:2} appear to arise from 
different mechanisms, it is observed in \cite{vH15} that these 
conjectures are actually equivalent: it is not difficult to show that 
the right-hand side in both inequalities is equivalent, up to the 
universal constant, to the explicit expression recorded in Conjecture 
\ref{conj:1}.  In fact, let us note that both conjectured mechanisms are 
essentially already present in the proof of Theorem \ref{thm:gine}: in 
the comparison process $Y_v$ that arises in the proof, the first term is 
strongly reminiscent of Conjecture \ref{conj:2}, while the second term 
is reminiscent of the second term in Conjecture \ref{conj:1}.  In this 
sense, the mechanism that is developed in the proof of Theorem 
\ref{thm:gine} provides even stronger evidence for the validity of these 
conjectures.  The remaining inefficiency in the proof of Theorem 
\ref{thm:gine} is discussed in detail in \cite{vH15}.

We conclude by discussing briefly a much more speculative question. The 
noncommutative Khintchine inequalities developed in the previous section 
hold in a very general setting, but are almost always suboptimal. In 
contrast, the bounds in this section yield nearly optimal results under 
the additional assumption that the matrix entries are independent. It 
would be very interesting to understand whether the bounds of the 
present section can be extended to the much more general setting 
captured by noncommutative Khintchine inequalities.  Unfortunately, 
independence is used crucially in the proofs of the results in this 
section, and it is far from clear what mechanism might give rise to 
analogous results in the dependent setting.

One might nonetheless speculate what such a result might potentially 
look like. In particular, we note that both parameters 
that appear in the sharp bound Theorem \ref{thm:bvh} have natural 
analogues in the general setting: in the setting of this section
$$
	\|\mathbf{E}X^2\| =
	\sup_{v\in B}\mathbf{E}\langle v,X^2v\rangle =
	\max_i\sum_jb_{ij}^2,
	\qquad\quad
	\sup_{v\in B}\mathbf{E}\langle v,Xv\rangle^2 =
	\max_{i,j}b_{ij}^2.
$$
We have already encountered both these quantities also in the previous
section: $\sigma=\|\mathbf{E}X^2\|^{1/2}$ is the natural structural parameter that arises in
noncommutative Khintchine inequalities, while $\sigma_*:=\sup_v\mathbf{E}[
\langle v,Xv\rangle^2]^{1/2}$ controls the fluctuations of the spectral norm
by Gaussian concentration (see the proof of Lemma \ref{lem:lowernck}). 
By analogy with Theorem \ref{thm:bvh}, we might therefore speculatively
conjecture:

\begin{conj}
\label{conj:3}
Let $X=\sum_{k=1}^s g_kA_k$ as in Theorem \ref{thm:nck}. Then
$$
	\mathbf{E}\|X\| \lesssim
	\|\mathbf{E}X^2\|^{1/2} + 
	\sup_{v\in B}\mathbf{E}[\langle v,Xv\rangle^2]^{1/2}\sqrt{\log n}.
$$
\end{conj}

Such a generalization would constitute a far-reaching improvement of the 
noncommutative Khintchine theory. The problem with Conjecture 
\ref{conj:3} is that it is completely unclear how such a bound might 
arise: the only evidence to date for the potential validity of such a 
bound is the vague analogy with the independent case, and the fact that 
a counterexample has yet to be found.

\subsection{Seginer's inequality}
\label{sec:seginer}

Throughout this chapter, we have focused attention on Gaussian random 
matrices. We depart briefly from this setting in this section to discuss 
some aspects of structured random matrices that arise under other 
distributions of the entries.

The main reason that we restricted attention to Gaussian matrices is 
that most of the difficulty of capturing the structure of the matrix 
arises in this setting; at the same time, all upper bounds we develop 
extend without difficulty to more general distributions, so there is no 
significant loss of generality in focusing on the Gaussian case. For 
example, let us illustrate the latter statement using the moment method.

\begin{lemma}
\label{lem:univ}
Let $X$ and $Y$ be symmetric random matrices with independent entries
(modulo symmetry).  Assume that $X_{ij}$ are centered and subgaussian, 
that is, $\mathbf{E}X_{ij}=0$ and $\mathbf{E}[X_{ij}^{2p}]^{1/2p}
\lesssim b_{ij}\sqrt{p}$ for all $p\ge 1$, and let
$Y_{ij}\sim N(0,b_{ij}^2)$.  Then
$$
	\mathbf{E}[\mathrm{Tr}[X^{2p}]]^{1/2p}
	\lesssim
	\mathbf{E}[\mathrm{Tr}[Y^{2p}]]^{1/2p}
	\quad\mbox{for all }p\ge 1.
$$
\end{lemma}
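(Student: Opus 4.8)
The plan is to run the moment expansion of Lemma~\ref{lem:mommeth} and compare $\mathbf{E}[\mathrm{Tr}[X^{2p}]]$ with $\mathbf{E}[\mathrm{Tr}[Y^{2p}]]$ walk by walk; it suffices to produce a universal constant $C$ with $\mathbf{E}[\mathrm{Tr}[X^{2p}]]\le C^{2p}\,\mathbf{E}[\mathrm{Tr}[Y^{2p}]]$ for all $p\ge 1$, and then take $2p$th roots. The first step is a symmetrization that removes the (possibly nonzero) odd moments of the entries of $X$. Let $X'$ be an independent copy of $X$. Since $M\mapsto\mathrm{Tr}[M^{2p}]=\|M\|_{2p}^{2p}$ is convex on symmetric matrices and $\mathbf{E}[X-X'\mid X]=X$, conditional Jensen gives $\mathbf{E}[\mathrm{Tr}[X^{2p}]]\le\mathbf{E}[\mathrm{Tr}[(X-X')^{2p}]]$. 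The matrix $X-X'$ has independent entries modulo symmetry, each symmetrically distributed, and $\mathbf{E}[(X_{ij}-X_{ij}')^{2q}]^{1/2q}\le 2\,\mathbf{E}[X_{ij}^{2q}]^{1/2q}\lesssim b_{ij}\sqrt q$ for all $q$. Hence it is enough to prove the lemma under the extra hypothesis that each $X_{ij}$ is symmetrically distributed; the general case then follows by applying this special case to $X-X'$.

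Next I would expand both moments over closed walks, exactly as in the discussion preceding Lemma~\ref{lem:compres}:
$$
	\mathbf{E}[\mathrm{Tr}[X^{2p}]]=\sum_{i_1,\ldots,i_{2p}=1}^n
	\mathbf{E}[X_{i_1i_2}X_{i_2i_3}\cdots X_{i_{2p}i_1}],
$$
and likewise for $Y$. By independence, the summand indexed by a walk $\gamma=(i_1\to i_2\to\cdots\to i_{2p}\to i_1)$ factorizes over the distinct edges of $\gamma$: if $\gamma$ traverses edge $e$ exactly $\ell_e$ times, so that $\sum_e\ell_e=2p$, then the $X$-summand equals $\prod_e\mathbf{E}[X_e^{\ell_e}]$ and the $Y$-summand equals $\prod_e\mathbf{E}[Y_e^{\ell_e}]$. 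Because the entries of $X$ (after symmetrization) and of $Y$ both have vanishing odd moments, only \emph{even} walks (every $\ell_e$ even) contribute to either sum, and the two sums range over exactly the same set of walks. It therefore suffices to compare the two summands for each fixed even walk.

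Fix such a $\gamma$. For each edge $e$ and each even $\ell$ the subgaussian hypothesis gives $\mathbf{E}[X_e^{\ell}]=\mathbf{E}[|X_e|^{\ell}]\lesssim(b_e\sqrt{\ell})^{\ell}$, whereas for the Gaussian entry $\mathbf{E}[Y_e^{\ell}]=b_e^{\ell}(\ell-1)!!\ge b_e^{\ell}(\ell/2)!\ge(b_e\sqrt{\ell/(2e)})^{\ell}$; hence $\mathbf{E}[X_e^{\ell}]\le C_0^{\ell}\,\mathbf{E}[Y_e^{\ell}]$ for a universal constant $C_0$. Multiplying over the edges of $\gamma$ and using $\sum_e\ell_e=2p$ shows that the $X$-summand is at most $C_0^{2p}$ times the $Y$-summand, and summing over all even walks yields $\mathbf{E}[\mathrm{Tr}[X^{2p}]]\le C_0^{2p}\,\mathbf{E}[\mathrm{Tr}[Y^{2p}]]$, which is what we wanted.

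The step I expect to be the real obstacle is the very first one. Without symmetrization there are non-even walks that contribute to $\mathbf{E}[\mathrm{Tr}[X^{2p}]]$ but not at all to $\mathbf{E}[\mathrm{Tr}[Y^{2p}]]$ (whose expansion, by Gaussianity, only sees even walks), so the walk-by-walk comparison breaks down. The symmetrization via an independent copy is precisely what eliminates these terms, at the cost of a universal constant; once one is reduced to symmetric entries, everything that remains is the Stirling-type estimate $(\ell-1)!!\asymp(\ell/e)^{\ell/2}$ already implicit in Lemma~\ref{lem:gaussmom}, together with routine bookkeeping.
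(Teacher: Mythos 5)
Your proposal is correct and follows essentially the same route as the paper: symmetrize via an independent copy using convexity of $M\mapsto\mathrm{Tr}[M^{2p}]$, expand the trace moment over closed walks, and compare term by term using the estimate $\mathbf{E}[Z_{ij}^{\ell}]\le C^{\ell}\,\mathbf{E}[Y_{ij}^{\ell}]$ for the symmetrized entries. The only difference is that you spell out the Stirling-type lower bound on the Gaussian moments, which the paper leaves implicit.
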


\begin{proof}
Let $X'$ be an independent copy of $X$. Then
$\mathbf{E}[\mathrm{Tr}[X^{2p}]] =
\mathbf{E}[\mathrm{Tr}[(X-\mathbf{E}X')^{2p}]] \le
\mathbf{E}[\mathrm{Tr}[(X-X')^{2p}]]$ by Jensen's
inequality.  Moreover, $Z=X-X'$ a symmetric random
matrix satisfying the same properties as $X$, with the additional
property that the entries $Z_{ij}$ have symmetric distribution.
Thus $\mathbf{E}[Z_{ij}^p]^{1/p} \lesssim \mathbf{E}[Y_{ij}^p]^{1/p}$
for all $p\ge 1$ (for odd $p$ both sides are zero by symmetry, while
for even $p$ this follows from the subgaussian assumption using
$\mathbf{E}[Y_{ij}^{2p}]^{1/2p}\asymp b_{ij}\sqrt{p}$).  It remains to
note that
\begin{align*}
	\mathbf{E}[\mathrm{Tr}[X^{2p}]] &=
	\sum_{\mathrm{cycle}~\gamma~\mathrm{of~length}~2p}
	\prod_{1\le i\le j\le n}\mathbf{E}[X_{ij}^{\#_{ij}(\gamma)}]
	\\ &\le
	C^{2p}
	\sum_{\mathrm{cycle}~\gamma~\mathrm{of~length}~2p}
	\prod_{1\le i\le j\le n}\mathbf{E}[Y_{ij}^{\#_{ij}(\gamma)}]
	=
	C^{2p}\,\mathbf{E}[\mathrm{Tr}[Y^{2p}]]
\end{align*}
for a universal constant $C$, where $\#_{ij}(\gamma)$ denotes the
number of times the edge $(i,j)$ appears in the cycle $\gamma$.
The conclusion follows immediately.
\qed\end{proof}

Lemma \ref{lem:univ} shows that to upper bound the moments of a 
subgaussian random matrix with independent entries, it suffices to 
obtain a bound in the Gaussian case. The reader may readily verify that 
the completely analogous approach can be applied in the more general 
setting of the noncommutative Khinthchine inequality. On the other hand, 
Gaussian bounds using the random process method extend to the 
subgaussian setting by virtue of a general subgaussian comparison 
principle \cite[Theorem 2.4.12]{Tal14}. Beyond the subgaussian setting, 
similar methods can be used for entries with heavy-tailed distributions, 
see for example \cite{BvH16}.

The above observations indicate that, in some sense, Gaussian random 
matrices are the ``worst case'' among subgaussian matrices. One can go 
one step further and ask whether there is some form of universality: do 
all subgaussian random matrices behave like their Gaussian counterparts?  
The universality phenomenon plays a major role in recent advances in 
random matrix theory: it turns out that many properties of Wigner 
matrices do not depend on the distribution of the entries.  
Unfortunately, we cannot expect universal behavior for structured random 
matrices: while Gaussian matrices are the ``worst case'' among 
subgaussian matrices, matrices with subgaussian entries can sometimes 
behave much better. The simplest example is the case of diagonal 
matrices (Example \ref{ex:diag}) with i.i.d.\ entries on the diagonal: 
in the Gaussian case $\mathbf{E}\|X\|\asymp \sqrt{\log n}$, but 
obviously $\mathbf{E}\|X\|\asymp 1$ if the entries are uniformly bounded 
(despite that uniformly bounded random variables are obviously 
subgaussian). In view of such examples, there is little hope to obtain a 
complete understanding of structured random matrices for arbitrary 
distributions of the entries.  This justifies the approach we have taken:
we seek sharp bounds for Gaussian matrices, which give rise to powerful
upper bounds for general distributions of the entries.

\begin{remark}
We emphasize in this context that Conjectures \ref{conj:1} and 
\ref{conj:2} in the previous section are fundamentally Gaussian 
in nature, and \emph{cannot} hold as stated for subgaussian matrices.
For a counterexample along the lines of Example \ref{ex:block}, see
\cite{Seg00}.
\end{remark}

Despite these negative observations, it can be of significant interest 
to go beyond the Gaussian setting to understand whether the bounds we 
have obtained can be systematically improved under more favorable 
assumptions on the distributions of the entries. To illustrate how such
improvements could arise, we discuss a result of Seginer \cite{Seg00}
for random matrices with independent \emph{uniformly bounded} entries.

\begin{theorem}[Seginer]
\label{thm:seginer}
Let $X$ be an $n\times n$ symmetric random matrix with independent entries
(modulo symmetry) and $\mathbf{E}X_{ij}=0$, 
$\|X_{ij}\|_\infty \lesssim b_{ij}$ for all $i,j$. Then
$$
	\mathbf{E}\|X\| \lesssim
	\max_{i\le n}
	\sqrt{\sum_{j=1}^n b_{ij}^2}
	~(\log n)^{1/4}.
$$
\end{theorem}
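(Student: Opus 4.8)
The plan is to run the moment method. By Lemma~\ref{lem:mommeth} (concretely, by $\|X\|^{2p}\le\mathrm{Tr}[X^{2p}]$ together with Jensen's inequality),
$$
	\mathbf{E}\|X\| \le \mathbf{E}[\mathrm{Tr}[X^{2p}]]^{1/2p},
$$
so it is enough to prove $\mathbf{E}[\mathrm{Tr}[X^{2p}]]\lesssim n\,(Cp)^{p/2}\,\sigma^{2p}$ for a universal constant $C$, where $\sigma:=\max_{i\le n}(\sum_{j}b_{ij}^2)^{1/2}$; choosing $p\asymp\log n$ then gives $\mathbf{E}\|X\|\lesssim\sigma\,(\log n)^{1/4}$, since $n^{1/2p}\asymp 1$ and $(Cp)^{p/2}$ contributes the fourth root of $\log n$. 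This is also the best the moment method can give here: for \emph{Gaussian} entries the same scheme produces $\sigma\,(\log n)^{1/2}$, which is optimal in that case (Example~\ref{ex:diag}), so the extra input must be the boundedness of the entries.

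First I would symmetrize: replacing $X$ by $X-X'$ with $X'$ an independent copy only increases $\mathbf{E}[\mathrm{Tr}[X^{2p}]]$ by convexity of $Y\mapsto\mathrm{Tr}[Y^{2p}]$ (exactly as in the proof of Lemma~\ref{lem:univ}) and at most doubles the bounds $\|X_{ij}\|_\infty\lesssim b_{ij}$, so there is no loss in assuming each $X_{ij}$ has a symmetric distribution. Next, expand $\mathbf{E}[\mathrm{Tr}[X^{2p}]]$ over cycles and group by shape exactly as in Example~\ref{ex:sparse}:
$$
	\mathbf{E}[\mathrm{Tr}[X^{2p}]]
	= \sum_{\mathbf{s}}\;\sum_{\phi}\;
	\prod_{e}\mathbf{E}\big[X_{\phi(e)}^{\#_e(\mathbf{s})}\big],
$$
where the inner sum runs over embeddings $\phi$ of the shape $\mathbf{s}$ into $[n]$ and the product over the distinct edges $e$ of $\mathbf{s}$. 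After symmetrization only \emph{even} shapes survive, those in which every edge is visited an even number of times, say $\#_e(\mathbf{s})=2\ell_e$ with $\sum_e\ell_e=p$. The decisive point — the only place boundedness is used — is that $\|X_{ij}\|_\infty\lesssim b_{ij}$ forces $\mathbf{E}[X_{ij}^{2\ell}]\le\|X_{ij}\|_\infty^{2\ell}\lesssim b_{ij}^{2\ell}$, hence $\prod_e\mathbf{E}[X_{\phi(e)}^{2\ell_e}]\lesssim C^{2p}\prod_e b_{\phi(e)}^{2\ell_e}$. For Gaussian entries one would instead pick up the extra factor $\prod_e(2\ell_e-1)!!$, which can be as large as $\approx p^{p}$; it is precisely this factorial growth that produces the superfluous $\sqrt{\log n}$ in the noncommutative Khintchine bound, and its absence for bounded entries is what buys the improvement to $(\log n)^{1/4}$.

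It therefore remains to bound $\sum_{\mathbf{s}\text{ even}}W(\mathbf{s})$, where $W(\mathbf{s}):=\sum_{\phi}\prod_e b_{\phi(e)}^{2\ell_e}$ is the weighted embedding count. A spanning-tree exploration of the edge graph of $\mathbf{s}$ — keeping a factor $b_{\phi(e)}^2\le\sigma^2$ for each edge of a chosen spanning tree, bounding the surplus powers by $\sigma_*^{2(p-v+1)}$ with $\sigma_*:=\max_{ij}b_{ij}$, and summing $\sum_j b_{ij}^2\le\sigma^2$ over each newly discovered vertex — gives $W(\mathbf{s})\le n\,\sigma^{2(v-1)}\sigma_*^{2(p-v+1)}\le n\,\sigma^{2p}$, where $v$ is the number of distinct vertices of $\mathbf{s}$. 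The Catalan-many ``tree'' shapes ($v=p+1$, every edge used exactly twice) already contribute only $\lesssim 4^p\,n\sigma^{2p}$, which is well within budget. The main obstacle — the heart of Seginer's argument — is the remaining ``degenerate'' shapes (an edge used four or more times, a cycle in the edge graph, or a self-loop), which have $v<p+1$: these are numerous, and one cannot afford to bound each $W(\mathbf{s})$ crudely by $n\sigma^{2p}$ and multiply by the number of shapes — that would merely reproduce the Khintchine estimate $\sigma\sqrt{\log n}$. What is needed is a delicate count that plays the dimension deficit $p-v+1$ of a shape's embeddings off against the number of such shapes (morally: either $\sigma_*\ll\sigma$, so low-dimensional shapes are suppressed by $(\sigma_*/\sigma)^{2(p-v+1)}$ and only the $\approx 4^p$ tree shapes matter, or $\sigma_*\asymp\sigma$, in which case the heaviest rows of $X$ are forced to concentrate on a single entry, restricting the structure), so that the total excess over $n\sigma^{2p}$ is at most $(Cp)^{p/2}$. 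Granting this combinatorial estimate, $\mathbf{E}[\mathrm{Tr}[X^{2p}]]\lesssim n\,(Cp)^{p/2}\sigma^{2p}$, and taking $p\asymp\log n$ finishes the proof.
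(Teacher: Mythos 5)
Your proposal outlines a direct trace-moment expansion in the spirit of Seginer's original argument, but the crucial step is left as a ``granted'' combinatorial estimate, and that is precisely where the real difficulty lives. The part you carry out is fine: symmetrization, expansion over shapes, and the observation that $\|X_{ij}\|_\infty\lesssim b_{ij}$ kills the factorial growth $\prod_e(2\ell_e-1)!!$ that Gaussian entries would contribute, correctly isolating where boundedness should help. But the remaining task — showing $\sum_{\mathbf{s}\text{ even}}W(\mathbf{s})\lesssim (Cp)^{p/2}\,n\,\sigma^{2p}$ — is the entire technical content of this route. Your intuition about trading the dimension deficit $p-v+1$ against the number of degenerate shapes, and the dichotomy $\sigma_*\ll\sigma$ vs.\ $\sigma_*\asymp\sigma$, is plausible but is not a proof; in particular the claim that when $\sigma_*\asymp\sigma$ ``the heaviest rows of $X$ are forced to concentrate on a single entry, restricting the structure'' has no argument behind it and is not obviously true for general variance profiles. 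As written, the proof is incomplete at its central step.

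The paper takes a genuinely different and much simpler route that sidesteps shape-counting entirely, by trading off a Gaussian bound against a deterministic one. It splits $X=X_1+X_2$ at a threshold $u$, with $(X_1)_{ij}=X_{ij}\mathbf{1}_{b_{ij}\le u}$ and $(X_2)_{ij}=X_{ij}\mathbf{1}_{b_{ij}>u}$. To $X_1$ it applies the already-established Gaussian bound of Theorem~\ref{thm:bvh} (valid in this subgaussian setting via Lemma~\ref{lem:univ}), which gives $\mathbf{E}\|X_1\|\lesssim\sigma+u\sqrt{\log n}$. To $X_2$ it applies the \emph{deterministic} bound $\|X_2\|\le\max_i\sum_j|(X_2)_{ij}|\lesssim\max_i\sum_j b_{ij}\mathbf{1}_{b_{ij}>u}\le\sigma^2/u$ — this is where boundedness enters, through a pointwise Cauchy--Schwarz on $\langle v,X_2v\rangle$, not through improved moment growth. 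Optimizing over $u$ gives $\sigma(\log n)^{1/4}$. This isolates the contribution of boundedness in a single elementary pointwise estimate and leverages the sharp Gaussian result, which was itself established not by raw shape-counting but by the Wigner-comparison mechanism of Lemma~\ref{lem:compres}. If you want to pursue the direct moment route, you would essentially have to reconstruct a version of Seginer's original combinatorial argument; the paper's proof is substantially easier and you should study how it reuses Theorem~\ref{thm:bvh} rather than starting from scratch.
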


The uniform bound $\|X_{ij}\|_\infty\lesssim b_{ij}$ certainly implies 
the much weaker subgaussian property 
$\mathbf{E}[X_{ij}^{2p}]^{1/2p}\lesssim b_{ij}\sqrt{p}$, so that the 
conclusion of Theorem \ref{thm:bvh} extends immediately to the present 
setting by Lemma \ref{lem:univ}. In many cases, the latter bound is much 
sharper than the one provided by Theorem \ref{thm:seginer}; indeed, 
Theorem \ref{thm:seginer} is suboptimal even for Wigner matrices (it 
could be viewed of a variant of the noncommutative Khintchine inequality 
in the present setting with a smaller power in the logarithmic factor). 
However, the interest of Theorem \ref{thm:seginer} is that it 
\emph{cannot} hold for Gaussian entries: for example, in the diagonal 
case $b_{ij}=\mathbf{1}_{i=j}$, Theorem \ref{thm:seginer} gives 
$\mathbf{E}\|X\|\lesssim (\log n)^{1/4}$ while any Gaussian bound must 
give at least $\mathbf{E}\|X\|\gtrsim\sqrt{\log n}$.  In this sense, 
Theorem \ref{thm:seginer} illustrates that it is possible in some cases
to exploit the effect of stronger distributional assumptions in order to 
obtain improved bounds for non-Gaussian random matrices. The simple 
proof that we will give (taken from \cite{BvH16}) shows very clearly how 
this additional distributional information enters the picture.

\begin{proof}[Proof of Theorem \ref{thm:seginer}]
The proof works by combining two very different bounds on the matrix
norm.  On the one hand, due to Lemma \ref{lem:univ}, we can directly apply
the Gaussian bound of Theorem \ref{thm:bvh} in the present setting.
On the other hand, as the entries of $X$ are uniformly bounded,
we can do something that is impossible for Gaussian random variables:
we can \emph{uniformly} bound the norm $\|X\|$ as
\begin{align*}
	\|X\| &=
	\sup_{v\in B} \Bigg|\sum_{i,j=1}^n v_i X_{ij} v_j\Bigg|
	\le
	\sup_{v\in B} \sum_{i,j=1}^n (|v_i|\,|X_{ij}|^{1/2})
	(|X_{ij}|^{1/2}|v_j|) \\
	&\le
	\sup_{v\in B} 
	\sum_{i,j=1}^n v_i^2|X_{ij}| =
	\max_{i\le n}\sum_{j=1}^n |X_{ij}|
	\le
	\max_{i\le n}\sum_{j=1}^n b_{ij},
\end{align*}
where we have used the Cauchy-Schwarz inequality in going from the first 
to the second line.  The idea behind the proof of Theorem 
\ref{thm:seginer} is roughly as follows.  Many small entries of $X$ can 
add up to give rise to a large norm; we might expect the 
cumulative effect of many independent centered random variables to give 
rise to Gaussian behavior.  On the other hand, if a few large entries of 
$X$ dominate the norm, there is no Gaussian behavior and we expect
that the uniform bound provides much better control.
To capture this idea, we partition the matrix into two parts
$X=X_1+X_2$, where $X_1$ contains the ``small'' entries and $X_2$
contains the ``large'' entries:
$$
	(X_1)_{ij} = X_{ij}\mathbf{1}_{b_{ij}\le u},\qquad\quad
	(X_2)_{ij} = X_{ij}\mathbf{1}_{b_{ij}>u}.
$$
Applying the 
Gaussian bound to $X_1$ and the uniform bound to $X_2$ yields
\begin{align*}
	\mathbf{E}\|X\| &\le
	\mathbf{E}\|X_1\| + \mathbf{E}\|X_2\| 
	\\ &\lesssim
	\max_{i\le n}\sqrt{\sum_{j=1}^n b_{ij}^2\mathbf{1}_{b_{ij}\le u}}
	+ u\sqrt{\log n}
	+
	\max_{i\le n}\sum_{j=1}^n b_{ij}\mathbf{1}_{b_{ij}>u}
	\\ &\le
	\max_{i\le n}\sqrt{\sum_{j=1}^n b_{ij}^2}
	+ u\sqrt{\log n}
	+ \frac{1}{u}\max_{i\le n}\sum_{j=1}^n b_{ij}^2.
\end{align*}
The proof is completed by optimizing over $u$.
\qed\end{proof}

The proof of Theorem \ref{thm:seginer} illustrates the improvement that 
can be achived by trading off between Gaussian and uniform bounds on the 
norm of a random matrix. Such tradeoffs play a fundamental role in the 
general theory that governs the suprema of bounded random processes 
\cite[Chapter 5]{Tal14}. Unfortunately, this tradeoff is captured only 
very crudely by the suboptimal Theorem \ref{thm:seginer}.

Developing a sharp understanding of the behavior of bounded random 
matrices is a problem of significant interest: the bounded analogue of 
sparse Wigner matrices (Example \ref{ex:sparse}) has interesting 
connections with graph theory and computer science, cf.\ \cite{AKM13} 
for a review of such applications.  Unlike in the Gaussian case, 
however, it is clear that the degree of the graph that defines a sparse 
Wigner matrix cannot fully explain its spectral norm in the present 
setting: very different behavior is exhibited in dense vs.\ 
locally tree-like graphs of the same degree \cite[section 
4.2]{BvH16}. To date, a deeper understanding of such matrices beyond 
the Gaussian case remains limited.

\section{Sample covariance matrices}
\label{sec:sampcov}

We finally turn our attention to a random matrix model that is somewhat 
different than the matrices we considered so far.  The following model 
will be considered throughout this section.  Let $\Sigma$ be a given 
$d\times d$ positive semidefinite matrix, and let $X_1,X_2,\ldots,X_n$ 
be i.i.d.\ centered Gaussian random vectors in $\mathbb{R}^d$ with 
covariance matrix $\Sigma$.  We consider in the following the 
$d\times d$ symmetric random matrix
$$
	Z = \frac{1}{n}\sum_{k=1}^n X_kX_k^* = \frac{XX^*}{n},
$$
where we defined the $d\times n$ matrix $X_{ik}=(X_k)_i$. In contrast to 
the models considered in the previous sections, the random matrix $Z$ is 
not centered: we have in fact $\mathbf{E}Z=\Sigma$. This gives rise to 
the classical statistical interpretation of this matrix. We can think of 
$X_1,\ldots,X_n$ as being i.i.d.\ data drawn from a centered Gaussian 
distribution with unknown covariance matrix $\Sigma$. In this setting, 
the random matrix $Z$, which depends only on the observed data, provides 
an unbiased estimator of the covariance matrix of the underlying data. 
For this reason, $Z$ is known as the \emph{sample covariance matrix}. Of 
primary interest in this setting is not so much the matrix norm 
$\|Z\|=\|X\|^2/n$ itself, but rather the deviation $\|Z-\Sigma\|$ of $Z$ 
from its mean.

The model of this section could be viewed as being 
``semi-structured.'' On the one hand, the covariance matrix $\Sigma$ is 
completely arbitrary, and it therefore allows for an arbitrary variance 
and dependence pattern within each column of the matrix $X$ (as in the 
most general setting of the noncommutative Khintchine inequality). On 
the other hand, the columns of $X$ are assumed to be i.i.d., 
so that no nontrivial structure among the columns is captured by the 
present model. While the latter assumption is limiting, it allows us
to obtain a complete understanding of the structural parameters that
control the expected deviation $\mathbf{E}\|Z-\Sigma\|$ in this setting
\cite{KL16}.

\begin{theorem}[Koltchinskii-Lounici]
\label{thm:kolt}
In the setting of this section
$$
	\mathbf{E}\|Z-\Sigma\| \asymp
	\|\Sigma\|\,\Bigg(
	\sqrt{\frac{r(\Sigma)}{n}} + \frac{r(\Sigma)}{n} 
	\Bigg),
$$
where $r(\Sigma) := \mathrm{Tr}[\Sigma]/\|\Sigma\|$ is the
\emph{effective rank} of $\Sigma$.
\end{theorem}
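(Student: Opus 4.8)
The starting point is that, since the $X_k$ are Gaussian, we may write $X_k=\Sigma^{1/2}g_k$ with $g_1,\dots,g_n$ i.i.d.\ standard Gaussian vectors in $\mathbb R^d$, assembled into the columns of a $d\times n$ standard Gaussian matrix $G$. Substituting $u=\Sigma^{1/2}v$ in Lemma~\ref{lem:rptriv} turns the quantity of interest into a supremum of a squared Gaussian process over an ellipsoid,
\[
	\|Z-\Sigma\|=\frac1n\sup_{u\in T}\bigl|\,\|G^*u\|^2-n\|u\|^2\,\bigr|,\qquad T:=\Sigma^{1/2}B,
\]
where $T$ has semiaxes $\lambda_i(\Sigma)^{1/2}$, so $\mathrm{rad}(T)=\|\Sigma\|^{1/2}$ and $\mathbf E\sup_{u\in T}\langle g,u\rangle=\mathbf E\|\Sigma^{1/2}g\|\le\sqrt{\mathrm{Tr}\,\Sigma}=\|\Sigma\|^{1/2}r(\Sigma)^{1/2}$. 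The plan for the upper bound is to \emph{linearise the square}: factoring $\|G^*u\|^2-n\|u\|^2=(\|G^*u\|-\sqrt n\|u\|)(\|G^*u\|+\sqrt n\|u\|)$, bounding $\sup_u$ of a product by the product of $\sup_u$'s, and applying Cauchy--Schwarz gives
\[
	\mathbf E\|Z-\Sigma\|\ \lesssim\ \frac1n\,\mathbf E[W^2]^{1/2}\,\bigl(\mathbf E[(S+\sqrt n\,\|\Sigma\|^{1/2})^2]\bigr)^{1/2},
\]
where $W:=\sup_{u\in T}\bigl|\,\|G^*u\|-\sqrt n\|u\|\,\bigr|$ and $S:=\sup_{u\in T}\|G^*u\|=\|\Sigma^{1/2}G\|_{\mathrm{op}}$.

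The factor $S$ is easy: writing $\|G^*u\|=\sup_{w\in B_n}\langle G,uw^*\rangle$ (with $B_n$ the unit ball of $\mathbb R^n$), Chevet's inequality — a standard consequence of the Slepian--Fernique Lemma~\ref{lem:slepian} applied to the process $(u,w)\mapsto\langle G,uw^*\rangle$ on $T\times B_n$ — gives $\mathbf E S\lesssim\sqrt n\,\|\Sigma\|^{1/2}+\sqrt{\mathrm{Tr}\,\Sigma}$, and since $G\mapsto\|G^*u\|$ is $\mathrm{rad}(T)$-Lipschitz, Lemma~\ref{lem:gaussconc} upgrades this to $\mathbf E[S^2]^{1/2}\lesssim\sqrt n\,\|\Sigma\|^{1/2}+\sqrt{\mathrm{Tr}\,\Sigma}$. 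The hard part is $W$, where one must prove $\mathbf E[W^2]^{1/2}\lesssim\sqrt{\mathrm{Tr}\,\Sigma}+\|\Sigma\|^{1/2}$. Since $\mathbf E\|G^*u\|=a_n\|u\|$ with $\sqrt n-a_n=O(n^{-1/2})$, the deterministic discrepancy between $\sqrt n\|u\|$ and $\mathbf E\|G^*u\|$ contributes only $O(\mathrm{rad}(T)\,n^{-1/2})$ to $W$, and since $G\mapsto W$ is $\mathrm{rad}(T)$-Lipschitz, Lemma~\ref{lem:gaussconc} bounds $\mathrm{Var}\,W=O(\|\Sigma\|)$; so everything reduces to showing $\mathbf E\sup_{u\in T}\bigl|\,\|G^*u\|-\mathbf E\|G^*u\|\,\bigr|\lesssim\sqrt{\mathrm{Tr}\,\Sigma}$. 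The naive increment estimate $|\|G^*u\|-\|G^*u'\||\le\|G^*(u-u')\|$, whose expectation is $\approx\sqrt n\|u-u'\|$, is far too crude — it produces a spurious factor $\sqrt n$ — because it ignores the cancellation of $\|G^*u\|$ against its mean. \textbf{This is the main obstacle.} The right tool is \emph{Gordon's inequality}, the two-sided refinement of Slepian--Fernique for $\min$--$\max$ functionals, applied to $(u,w)\mapsto\langle G,uw^*\rangle$ on $T\times B_n$: it controls both $\mathbf E\sup_{u\in T}(\|G^*u\|-\sqrt n\|u\|)$ and $\mathbf E\sup_{u\in T}(\sqrt n\|u\|-\|G^*u\|)$ by $\mathbf E\sup_{u\in T}\langle g,u\rangle\le\sqrt{\mathrm{Tr}\,\Sigma}$ up to the $O(\mathrm{rad}(T)\,n^{-1/2})$ correction. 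This is precisely where the ellipsoidal \emph{geometry} of $T$, and not merely its radius, enters, and it is why the effective rank $r(\Sigma)$ — rather than the ambient dimension $d$ — governs the answer. Plugging the two factor estimates into the displayed bound and absorbing the lower-order cross terms (using $r(\Sigma)\ge1$) yields
\[
	\mathbf E\|Z-\Sigma\|\ \lesssim\ \frac1n\bigl(\sqrt{\mathrm{Tr}\,\Sigma}+\|\Sigma\|^{1/2}\bigr)\bigl(\sqrt n\,\|\Sigma\|^{1/2}+\sqrt{\mathrm{Tr}\,\Sigma}\bigr)\ \asymp\ \|\Sigma\|\Bigl(\sqrt{r(\Sigma)/n}+r(\Sigma)/n\Bigr).
\]

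For the matching lower bound I would combine three observations, one per regime of $r(\Sigma)$. First, testing against a top eigenvector $v_1$ of $\Sigma$ gives $\mathbf E\|Z-\Sigma\|\ge\mathbf E|\langle v_1,(Z-\Sigma)v_1\rangle|=\|\Sigma\|\,\mathbf E\bigl|\tfrac1n\sum_k(\xi_k^2-1)\bigr|\asymp\|\Sigma\|\,n^{-1/2}$ with $\xi_k$ i.i.d.\ standard Gaussian; this is $\gtrsim\|\Sigma\|\sqrt{r(\Sigma)/n}$ whenever $r(\Sigma)=O(1)$, and serves as a baseline. Second, a direct Gaussian moment computation gives $\mathbf E(Z-\Sigma)^2=\tfrac1n(\mathrm{Tr}(\Sigma)\,\Sigma+\Sigma^2)$, hence $\mathbf E\|Z-\Sigma\|^2\ge\|\mathbf E(Z-\Sigma)^2\|=\tfrac1n\|\Sigma\|^2(r(\Sigma)+1)$ by Jensen (exactly as in Lemma~\ref{lem:lowernck}); combined with the variance bound $\mathrm{Var}\|Z-\Sigma\|\lesssim\|\Sigma\|^2/n$ valid for $r(\Sigma)\lesssim n$ (from Lemma~\ref{lem:gaussconc}, using that $G\mapsto\|Z-\Sigma\|$ has gradient of norm $\lesssim\tfrac2n\|\Sigma\|^{1/2}\|\Sigma^{1/2}G\|_{\mathrm{op}}$ together with the bound on $\mathbf E\|\Sigma^{1/2}G\|_{\mathrm{op}}^2$ above), a one-line bootstrap gives $\mathbf E\|Z-\Sigma\|\gtrsim\|\Sigma\|\sqrt{r(\Sigma)/n}$ once $r(\Sigma)$ exceeds an absolute constant. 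Third, when $r(\Sigma)\gtrsim n$ one has $\mathbf E\|Z-\Sigma\|\ge\mathbf E\|Z\|-\|\Sigma\|\ge\tfrac1n(\mathbf E\|\Sigma^{1/2}G\|_{\mathrm{op}})^2-\|\Sigma\|\gtrsim\|\Sigma\|\,r(\Sigma)/n$, using the lower bound $\mathbf E\|\Sigma^{1/2}G\|_{\mathrm{op}}\gtrsim\sqrt{n\|\Sigma\|}+\sqrt{\mathrm{Tr}\,\Sigma}$ obtained by testing each factor of the bilinear form against a fixed vector. Taking the maximum of the three lower bounds — with a routine case analysis covering also bounded $r(\Sigma)$ or bounded $n$ — matches the upper bound, completing the proof.
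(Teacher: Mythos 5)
Your proposal is correct in outline but takes a genuinely different route from the paper, on both sides of the estimate, and it leans on an external tool that the paper deliberately avoids.

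For the upper bound, the paper uses \emph{decoupling} (Lemma~\ref{lem:decouple}) to replace $XX^*$ by $X\tilde X^*$ with an independent copy $\tilde X$, and then applies Slepian--Fernique \emph{conditionally} on $\tilde X$ (Lemma~\ref{lem:sqreduce}); the whole argument uses only tools already developed in the chapter. You instead reduce to the supremum over the ellipsoid $T=\Sigma^{1/2}B$ of the squared process, factor the square, Cauchy--Schwarz, and then invoke the two-sided deviation estimate
$\mathbf E\sup_{u\in T}\bigl|\,\|G^*u\|-\sqrt n\|u\|\,\bigr|\lesssim w(T)+\mathrm{rad}(T)$, attributing it to Gordon's inequality. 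This is indeed where the work is, and you correctly flag it as the main obstacle, but be aware that it is \emph{not} a one-line consequence of the Gaussian min--max theorem as stated: the min--max form of Gordon (with the shift $\psi(u,w)=-a_n\|u\|$, $a_n=\mathbf E\|h\|$) yields the one-sided bound on $\mathbf E\sup_u(a_n\|u\|-\|G^*u\|)$, but the companion bound on $\mathbf E\sup_u(\|G^*u\|-a_n\|u\|)$ does not come from the same comparison (max--max is Chevet, which only produces $a_n\,\mathrm{rad}(T)$, not $a_n\|u\|$ inside the supremum). The full two-sided ``matrix deviation inequality'' is a genuine theorem in its own right (Gordon, Schechtman, Klartag--Mendelson; see also Liaw--Mehrabian--Plan--Vershynin), and importing it would add substantially more machinery than the paper's self-contained decoupling proof. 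Once you grant that ingredient, your arithmetic does close: $\mathbf E[W^2]^{1/2}\lesssim\sqrt{\mathrm{Tr}\,\Sigma}+\|\Sigma\|^{1/2}$, $\mathbf E[(S+\sqrt n\|\Sigma\|^{1/2})^2]^{1/2}\lesssim\sqrt{n\|\Sigma\|}+\sqrt{\mathrm{Tr}\,\Sigma}$, and their product divided by $n$ is $\asymp\|\Sigma\|(\sqrt{r/n}+r/n)$ using $r(\Sigma)\ge1$.

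For the lower bound, the paper's Lemma~\ref{lem:recouple} is a clever \emph{reverse decoupling} identity that delivers $\|\Sigma\|\sqrt{r/n}$ directly in Corollary~\ref{cor:firstterm}; you instead combine a top-eigenvector CLT bound, a Wick computation giving $\mathbf E(Z-\Sigma)^2=\tfrac1n(\mathrm{Tr}(\Sigma)\Sigma+\Sigma^2)$ hence $\mathbf E\|Z-\Sigma\|^2\ge\tfrac{\|\Sigma\|^2}{n}(r+1)$, and a variance bootstrap via Gaussian concentration, plus a separate crude estimate for $r\gtrsim n$. I checked the Wick identity and the variance bound, and the regime analysis does close, though it requires more constant-chasing than the paper's route (and your $r\gtrsim n$ case is a slightly more circuitous version of the paper's one-liner $Z\succeq X_1X_1^*/n$, which gives $\mathbf E\|Z\|\ge\mathrm{Tr}(\Sigma)/n$ with no constants). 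In summary: the proposal is sound as a sketch and represents a legitimate alternative proof, but the paper's decoupling arguments (both forward and reverse) are more elementary and self-contained than your Gordon-based route, which is closer in spirit to the original Koltchinskii--Lounici argument that the paper explicitly mentions it is avoiding.
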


The remainder of this section is devoted to the proof of Theorem
\ref{thm:kolt}.

\subsubsection*{Upper bound}

The proof of Theorem \ref{thm:kolt} will use the random process method 
using tools that were already developed in the previous sections. 
It would be clear how to proceed if we wanted to bound $\|Z\|$:
as $\|Z\|=\|X\|^2/n$, it would suffice to bound
$\|X\|$ which is the supremum of a Gaussian process.  Unfortunately,
this idea does not extend directly to the problem of bounding
$\|Z-\Sigma\|$: the latter quantity is not the supremum of a centered 
Gaussian process, but rather of a \emph{squared} Gaussian process
$$
	\|Z-\Sigma\| =
	\sup_{v\in B}\Bigg|\frac{1}{n}\sum_{k=1}^n
	\{\langle v,X_k\rangle^2-\mathbf{E}\langle v,X_k\rangle^2\}
	\Bigg|.
$$
We therefore cannot directly apply a Gaussian comparison method such as
the Slepian-Fernique inequality to control the expected deviation
$\mathbf{E}\|Z-\Sigma\|$.

To surmount this problem, we will use a simple device that is widely 
used in the study of squared Gaussian processes (or \emph{Gaussian 
chaos}), cf.\ \cite[section 3.2]{LT91}.

\begin{lemma}[Decoupling]
\label{lem:decouple}
Let $\tilde X$ be an independent copy of $X$. Then
$$
	\mathbf{E}\|Z-\Sigma\| \le \frac{2}{n}\,
	\mathbf{E}\|X\tilde X^*\|.
$$
\end{lemma}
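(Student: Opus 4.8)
The plan is to use the standard ``rotation'' device for linearizing a Gaussian chaos. First I would get rid of the deterministic centering: writing $Z-\Sigma = \frac1n\sum_{k=1}^n(X_kX_k^*-\Sigma)$ and using that $\Sigma = \mathbf{E}[\tilde X_k\tilde X_k^*]$ for an independent copy $\tilde X$ of $X$, Jensen's inequality (conditioning on $X$ and moving $\|\cdot\|$ outside the conditional expectation over $\tilde X$) gives
$$
	\mathbf{E}\|Z-\Sigma\| \le \frac1n\,\mathbf{E}\left\|
	\sum_{k=1}^n (X_kX_k^* - \tilde X_k\tilde X_k^*)\right\|.
$$

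The key step is the change of variables $U_k := (X_k+\tilde X_k)/\sqrt2$, $V_k := (X_k-\tilde X_k)/\sqrt2$. Since $X_k$ and $\tilde X_k$ are i.i.d.\ $N(0,\Sigma)$, the vectors $U_k$ and $V_k$ are again i.i.d.\ $N(0,\Sigma)$ \emph{and independent of one another} (this is exactly where the Gaussian assumption is used: a rotation of a pair of i.i.d.\ Gaussians preserves independence), and the whole array $(U_k,V_k)_{k\le n}$ has the same joint law as $(X_k,\tilde X_k)_{k\le n}$. A one-line algebraic identity gives $X_kX_k^* - \tilde X_k\tilde X_k^* = U_kV_k^* + V_kU_k^*$, so by the triangle inequality and the fact that $\sum_k V_kU_k^* = (\sum_k U_kV_k^*)^*$ has the same spectral norm as $\sum_k U_kV_k^*$,
$$
	\mathbf{E}\|Z-\Sigma\| \le \frac2n\,\mathbf{E}\left\|
	\sum_{k=1}^n U_kV_k^*\right\| = \frac2n\,\mathbf{E}\|UV^*\|,
$$
where $U$ and $V$ are the $d\times n$ matrices with columns $U_k$ and $V_k$. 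Since $(U,V)$ has the same distribution as $(X,\tilde X)$, we get $\mathbf{E}\|UV^*\| = \mathbf{E}\|X\tilde X^*\|$, which is the claimed inequality.

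I do not expect a genuine obstacle here: the whole argument is elementary once one hits on the rotation. The two points that deserve a moment's care are (i) verifying that $U_k$ and $V_k$ are truly independent, for which it suffices to note that they are jointly Gaussian with $\mathrm{Cov}(U_k,V_k)=\tfrac12(\Sigma-\Sigma)=0$; and (ii) the passage from $\|\sum_k(U_kV_k^*+V_kU_k^*)\|$ to $2\|\sum_k U_kV_k^*\|$, which is just the triangle inequality together with $\|M\|=\|M^*\|$. Neither is deep, and no tool beyond basic properties of Gaussian vectors is needed.
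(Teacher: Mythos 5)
Your proof is correct and uses the same decoupling-by-rotation idea as the paper. The only cosmetic difference is the order of operations: the paper applies conditional Jensen directly to the identity $Z-\Sigma = \tfrac{1}{n}\,\mathbf{E}[(X+\tilde X)(X-\tilde X)^*\mid X]$, so the rotation is built in and no triangle inequality is needed, whereas you first symmetrize to $XX^*-\tilde X\tilde X^*$ via Jensen, then rotate and invoke the triangle inequality to pick up the factor of $2$.
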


\begin{proof}
By Jensen's inequality
$$
	\mathbf{E}\|Z-\Sigma\| 
	=
	\frac{1}{n}\,\mathbf{E}\|\mathbf{E}[(X+\tilde X)(X-\tilde X)^*|X]\|
	\le
	\frac{1}{n}\,\mathbf{E}\|(X+\tilde X)(X-\tilde X)^*\|.
$$
It remains to note that $(X+\tilde X,X-\tilde X)$ has the same distribution
as $\sqrt{2}\,(X,\tilde X)$.
\qed\end{proof}

Roughly speaking, the decoupling device of Lemma \ref{lem:decouple} 
allows us to replace the square $XX^*$ of a Gaussian matrix by a product 
of two independent copies $X\tilde X^*$.  While the latter is still
not Gaussian, it becomes Gaussian if we condition on one of the copies
(say, $\tilde X$).  This means that $\|X\tilde X^*\|$ is the supremum
of a Gaussian process \emph{conditionally} on $\tilde X$.  This is precisely
what we will exploit in the sequel: we use the Slepian-Fernique inequality
conditionally on $\tilde X$ to obtain the following bound.

\begin{lemma}
\label{lem:sqreduce}
In the setting of this section
$$
	\mathbf{E}\|Z-\Sigma\| \lesssim
	\mathbf{E}\|X\|\frac{\sqrt{\mathrm{Tr}[\Sigma]}}{n} +
	\|\Sigma\|\sqrt{\frac{r(\Sigma)}{n}}.
$$
\end{lemma}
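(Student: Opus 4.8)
The plan is to reduce the problem to a Gaussian comparison argument of exactly the type used in the proof of Lemma~\ref{lem:wigner}, using the decoupling device of Lemma~\ref{lem:decouple} to linearize the square. By Lemma~\ref{lem:decouple} it suffices to bound $\frac{2}{n}\mathbf{E}\|X\tilde X^*\|$, where $\tilde X$ is an independent copy of $X$. Writing out the bilinear form,
\[
\|X\tilde X^*\| = \sup_{v,w\in B}\langle X^*v,\tilde X^*w\rangle
= \sup_{v,w\in B}\sum_{k=1}^n\langle v,X_k\rangle\langle \tilde X_k,w\rangle,
\]
so that, conditionally on $\tilde X$, the quantity $\|X\tilde X^*\|$ is the supremum of the centered Gaussian process $G_{v,w}:=\langle X^*v,\tilde X^*w\rangle$ on $B\times B$ (it is Gaussian in the randomness of $X$ since it is linear in $X$, whose columns are $N(0,\Sigma)$).

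Next I would estimate the conditional increments and dominate $G$ by a tractable process. Writing $a=\tilde X^*w$ and $a'=\tilde X^*w'$, one computes $\mathbf{E}[(G_{v,w}-G_{v',w'})^2\mid\tilde X]=\sum_{k=1}^n (a_kv-a'_kv')^*\Sigma(a_kv-a'_kv')$. The key manipulation is the split $a_kv-a'_kv'=a_k(v-v')+(a_k-a'_k)v'$; combining $\|p+q\|_\Sigma^2\le 2\|p\|_\Sigma^2+2\|q\|_\Sigma^2$ with the elementary bounds $\|a\|=\|\tilde X^*w\|\le\|\tilde X\|$ and $\|v'\|_\Sigma^2\le\|\Sigma\|$ for $v',w\in B$, and $\|a-a'\|=\|\tilde X^*(w-w')\|$, yields
\[
\mathbf{E}[(G_{v,w}-G_{v',w'})^2\mid\tilde X]\le 2\|\tilde X\|^2\,\|\Sigma^{1/2}(v-v')\|^2+2\|\Sigma\|\,\|\tilde X^*(w-w')\|^2.
\]
The right-hand side is exactly $\mathbf{E}[(Y_{v,w}-Y_{v',w'})^2\mid\tilde X]$ for the Gaussian process $Y_{v,w}:=\sqrt2\,\|\tilde X\|\,\langle\Sigma^{1/2}v,g\rangle+\sqrt{2\|\Sigma\|}\,\langle\tilde X^*w,h\rangle$, where $g\sim N(0,I_d)$ and $h\sim N(0,I_n)$ are independent of each other and of $X,\tilde X$.

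Now I would apply the Slepian--Fernique inequality (Lemma~\ref{lem:slepian}, passing to finite $\varepsilon$-nets of $B\times B$ in the usual way) conditionally on $\tilde X$, and then take expectations. Since $\sup_{v,w}Y_{v,w}=\sqrt2\|\tilde X\|\,\|\Sigma^{1/2}g\|+\sqrt{2\|\Sigma\|}\,\|\tilde Xh\|$, this gives $\mathbf{E}[\|X\tilde X^*\|\mid\tilde X]\le\sqrt2\,\|\tilde X\|\,\mathbf{E}\|\Sigma^{1/2}g\|+\sqrt{2\|\Sigma\|}\,\mathbf{E}[\|\tilde Xh\|\mid\tilde X]$; bounding each expected norm by the square root of its second moment gives $\mathbf{E}\|\Sigma^{1/2}g\|\le\sqrt{\mathrm{Tr}[\Sigma]}$ and $\mathbf{E}[\|\tilde Xh\|\mid\tilde X]\le\mathrm{Tr}[\tilde X\tilde X^*]^{1/2}=:\|\tilde X\|_{\mathrm{HS}}$. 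Taking expectations over $\tilde X$ and using $\mathbf{E}\|\tilde X\|=\mathbf{E}\|X\|$ together with $\mathbf{E}\|\tilde X\|_{\mathrm{HS}}\le(\mathbf{E}\,\mathrm{Tr}[\tilde X\tilde X^*])^{1/2}=(n\,\mathrm{Tr}[\Sigma])^{1/2}$, one obtains $\mathbf{E}\|X\tilde X^*\|\lesssim\sqrt{\mathrm{Tr}[\Sigma]}\,\mathbf{E}\|X\|+\sqrt{n\|\Sigma\|\,\mathrm{Tr}[\Sigma]}$. Dividing by $n$ and recalling $\sqrt{\|\Sigma\|\,\mathrm{Tr}[\Sigma]/n}=\|\Sigma\|\sqrt{r(\Sigma)/n}$ finishes the proof.

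The hard part is getting the comparison process right. The naive choice, which factors the whole $\|\tilde X\|$ out of both terms, would produce a spurious $\mathbf{E}\|X\|\sqrt{d}$ contribution; the point is that the $w$-increment must be retained in the form $\|\tilde X^*(w-w')\|$, so that the supremum of the corresponding Gaussian piece is $\sqrt{2\|\Sigma\|}\,\|\tilde Xh\|$ and is therefore controlled by the \emph{Hilbert--Schmidt} norm $\|\tilde X\|_{\mathrm{HS}}$ rather than by the operator norm. It is precisely this Frobenius norm that contributes the $\sqrt{n\,\mathrm{Tr}[\Sigma]}$, hence the dimension-free term $\|\Sigma\|\sqrt{r(\Sigma)/n}$. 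Everything else --- the decoupling, the second-moment bounds, and the $\varepsilon$-net reduction needed to invoke Slepian--Fernique --- is routine.
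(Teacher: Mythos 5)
Your proof is correct and follows essentially the same route as the paper: decouple via Lemma~\ref{lem:decouple}, condition on $\tilde X$ to make $\|X\tilde X^*\|$ the supremum of a Gaussian process, bound its conditional increments by $2\|\tilde X\|^2\|\Sigma^{1/2}(v-v')\|^2+2\|\Sigma\|\,\|\tilde X^*(w-w')\|^2$, dominate by the same two-term comparison process $Y_{v,w}$ via Slepian--Fernique, and then use second-moment bounds $\mathbf{E}\|\Sigma^{1/2}g\|\le\sqrt{\mathrm{Tr}[\Sigma]}$, $\mathbf{E}\mathrm{Tr}[\tilde X\tilde X^*]^{1/2}\le\sqrt{n\,\mathrm{Tr}[\Sigma]}$. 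Your split $a_kv-a'_kv'=a_k(v-v')+(a_k-a'_k)v'$ is the same algebraic decomposition the paper applies directly to $Z_{v,w}-Z_{v',w'}$, and your closing remark about needing the Hilbert--Schmidt norm (rather than the operator norm) in the $w$-term correctly identifies why the comparison process must be chosen this way.
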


\begin{proof}
By Lemma \ref{lem:decouple} we have
$$
	\mathbf{E}\|Z-\Sigma\| \le \frac{2}{n}\,
	\mathbf{E}\bigg[\sup_{v,w\in B}Z_{v,w}\bigg],\qquad\quad
	Z_{v,w} := 
	\sum_{k=1}^n \langle v,X_k\rangle \langle w,\tilde X_k\rangle.
$$
Writing for simplicity $\mathbf{E}_{\tilde X}[\cdot] =
\mathbf{E}[\cdot|\tilde X]$, we can estimate
\begin{align*}
	\mathbf{E}_{\tilde X}(Z_{v,w}-Z_{v',w'})^2 &\le
	2\langle v-v',\Sigma(v-v')\rangle 
	\sum_{k=1}^n \langle w,\tilde X_k\rangle^2
	+
	2\langle v',\Sigma v'\rangle 
	\sum_{k=1}^n 
	\langle w-w',\tilde X_k\rangle^2 \\
	& \le
	2 \|\tilde X\|^2 \|\Sigma^{1/2}(v-v')\|^2 +
	2\|\Sigma\|\,\|\tilde X^*(w-w')\|^2 \\
	&=
	\mathbf{E}_{\tilde X}(Y_{v,w}-Y_{v',w'})^2,
	\phantom{\sum_{k=1}^n}
\end{align*}
where we defined
$$
	Y_{v,w} = 
	\sqrt{2}\,\|\tilde X\|\,\langle v,\Sigma^{1/2}g\rangle +
	(2\|\Sigma\|)^{1/2}\,\langle w,\tilde X g'\rangle
$$
with $g,g'$ independent standard Gaussian vectors in $\mathbb{R}^d$
and $\mathbb{R}^n$, respectively. Thus
\begin{align*}
	\mathbf{E}_{\tilde X}\bigg[\sup_{v,w\in B}Z_{v,w}\bigg]
	\le
	\mathbf{E}_{\tilde X}\bigg[\sup_{v,w\in B}Y_{v,w}\bigg]
	&\lesssim
	\|\tilde X\|\, \mathbf{E}\|\Sigma^{1/2}g\| +
	\|\Sigma\|^{1/2}\,\mathbf{E}_{\tilde X}\|\tilde X g\|
	\\
	&\le
	\|\tilde X\|\sqrt{\mathrm{Tr}[\Sigma]} +
	\|\Sigma\|^{1/2}\,\mathrm{Tr}[\tilde X \tilde X^*]^{1/2}
\end{align*}
by the Slepian-Fernique inequality.  Taking the expectation with respect
to $\tilde X$ and using
that $\mathbf{E}\|\tilde X\|=\mathbf{E}\|X\|$ and
$\mathbf{E}[\mathrm{Tr}[\tilde X \tilde X^*]^{1/2}] \le
\sqrt{n\,\mathrm{Tr}[\Sigma]}$ yields the conclusion.
\qed\end{proof}

Lemma \ref{lem:sqreduce} has reduced the problem of bounding 
$\mathbf{E}\|Z-\Sigma\|$ to the much more straightforward problem of 
bounding $\mathbf{E}\|X\|$: as $\|X\|$ is the supremum of a Gaussian 
process, the latter is amenable to a direct application of the 
Slepian-Fernique inequality precisely as was done in the proof of Lemma 
\ref{lem:wigner}.

\begin{lemma}
\label{lem:normscm}
In the setting of this section
$$
	\mathbf{E}\|X\| \lesssim
	\sqrt{\mathrm{Tr}[\Sigma]} + \sqrt{n\|\Sigma\|}.
$$
\end{lemma}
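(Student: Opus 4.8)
The strategy is to write $\|X\| = \sup_{v \in B, w \in B'} \langle v, Xw\rangle$ where $B \subset \mathbb{R}^d$ and $B' \subset \mathbb{R}^n$ are the Euclidean unit balls, recognize the right-hand side as the supremum of a centered Gaussian process indexed by $B \times B'$, and dominate it by a simpler Gaussian process via the Slepian–Fernique inequality (Lemma \ref{lem:slepian}), exactly as in the proof of Lemma \ref{lem:wigner}. Concretely, since each $X_k \sim N(0,\Sigma)$, we may write $X = \Sigma^{1/2} G$ where $G$ is a $d \times n$ matrix with i.i.d.\ standard Gaussian entries. Then the process $W_{v,w} := \langle v, Xw\rangle = \langle \Sigma^{1/2} v, Gw\rangle$ is Gaussian with
$$
\mathbf{E}[W_{v,w}^2] = \|\Sigma^{1/2}v\|^2\,\|w\|^2.
$$

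First I would compute the increments of $W$. A standard bilinear expansion gives, for $v,v' \in B$ and $w,w' \in B'$,
$$
\mathbf{E}(W_{v,w} - W_{v',w'})^2 \le 2\|\Sigma^{1/2}(v-v')\|^2\,\|w\|^2 + 2\|\Sigma^{1/2}v'\|^2\,\|w-w'\|^2 \le 2\|\Sigma^{1/2}(v-v')\|^2 + 2\|\Sigma\|\,\|w-w'\|^2,
$$
using $\|w\| \le 1$, $\|v'\| \le 1$. This suggests the comparison process
$$
Y_{v,w} := \sqrt{2}\,\langle \Sigma^{1/2}v, g\rangle + \sqrt{2\|\Sigma\|}\,\langle w, g'\rangle,
$$
where $g \sim N(0,I_d)$ and $g' \sim N(0,I_n)$ are independent, whose increments are exactly $\mathbf{E}(Y_{v,w}-Y_{v',w'})^2 = 2\|\Sigma^{1/2}(v-v')\|^2 + 2\|\Sigma\|\,\|w-w'\|^2$. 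Thus $\mathbf{E}(W_{v,w}-W_{v',w'})^2 \le \mathbf{E}(Y_{v,w}-Y_{v',w'})^2$, so Slepian–Fernique yields
$$
\mathbf{E}\|X\| = \mathbf{E}\sup_{v\in B, w\in B'} W_{v,w} \le \mathbf{E}\sup_{v\in B,w\in B'} Y_{v,w} = \sqrt{2}\,\mathbf{E}\|\Sigma^{1/2}g\| + \sqrt{2\|\Sigma\|}\,\mathbf{E}\|g'\|.
$$

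It then remains to bound the two expectations on the right. For the first, $\mathbf{E}\|\Sigma^{1/2}g\| \le (\mathbf{E}\|\Sigma^{1/2}g\|^2)^{1/2} = \mathrm{Tr}[\Sigma]^{1/2}$ by Jensen. For the second, $\mathbf{E}\|g'\| \le (\mathbf{E}\|g'\|^2)^{1/2} = \sqrt{n}$. Combining, $\mathbf{E}\|X\| \lesssim \sqrt{\mathrm{Tr}[\Sigma]} + \sqrt{n\|\Sigma\|}$, which is the claimed bound. I do not anticipate a serious obstacle here: the only mild subtlety is the (standard) point that Slepian–Fernique as stated in Lemma \ref{lem:slepian} is for finite index sets, so one must either invoke it for finite discretizations and pass to the limit by continuity of both processes, or simply note that $B \times B'$ is compact and the suprema are attained. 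Everything else is a routine variance computation.
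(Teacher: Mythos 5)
Your proposal is correct and follows essentially the same route as the paper's proof: the same bilinear increment bound, the same comparison process $Y_{v,w}=\sqrt{2}\,\langle v,\Sigma^{1/2}g\rangle+\sqrt{2\|\Sigma\|}\,\langle w,g'\rangle$, and the same application of Slepian--Fernique followed by Jensen's inequality for the two Gaussian norms.
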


\begin{proof}
Note that
\begin{align*}
	\mathbf{E}(\langle v,Xw\rangle - \langle v',Xw'\rangle)^2
	&\le
	2 \,\mathbf{E}(\langle v-v',Xw\rangle)^2 +
	2 \,\mathbf{E}(\langle v',X(w-w')\rangle)^2 \\
	& =
	2\|\Sigma^{1/2}(v-v')\|^2\|w\|^2 +
	2\|\Sigma^{1/2}v'\|^2\|w-w'\|^2 \\
	&\le
	\mathbf{E}(X'_{v,w}-X'_{v',w'})^2
\end{align*}
when $\|v\|,\|w\|\le 1$, where we defined
$$
	X'_{v,w} = \sqrt{2}\,\langle v,\Sigma^{1/2}g\rangle +
	\sqrt{2}\,\|\Sigma\|^{1/2}\,\langle w,g'\rangle
$$
with $g,g'$ independent standard Gaussian vectors in $\mathbb{R}^d$
and $\mathbb{R}^n$, respectively.
Thus
$$
	\mathbf{E}\|X\| =
	\mathbf{E}\bigg[\sup_{v,w\in B}\langle v,Xw\rangle\bigg] \le
	\mathbf{E}\bigg[\sup_{v,w\in B}X'_{v,w}\bigg] \lesssim
	\mathbf{E}\|\Sigma^{1/2}g\| + \|\Sigma\|^{1/2}\mathbf{E}\|g\|
$$
by the Slepian-Fernique inequality.  The proof is easily completed.
\qed\end{proof}

The proof of the upper bound in Theorem \ref{thm:kolt} is now 
immediately completed by combining the results of Lemma 
\ref{lem:sqreduce} and Lemma \ref{lem:normscm}.

\begin{remark}

The proof of the upper bound given here reduces the problem of 
controlling the supremum of a Gaussian chaos process by decoupling to 
that of controlling the supremum of a Gaussian process. The original proof in 
\cite{KL16} uses a different method that exploits a much deeper general 
result on the suprema of empirical processes of squares, cf.\ 
\cite[Theorem 9.3.7]{Tal14}. While the route we have taken is much more 
elementary, the original approach has the advantage that it applies 
directly to subgaussian matrices.  The result of \cite{KL16} is also
stated for norms other than the spectral norm, but proof given here
extends readily to this setting.
\end{remark}

\subsubsection*{Lower bound}

It remains to prove the lower bound in Theorem \ref{thm:kolt}. The 
main idea behind the proof is that the decoupling inequality of
Lemma \ref{lem:decouple} can be partially reversed.

\begin{lemma}
\label{lem:recouple}
Let $\tilde X$ be an independent copy of $X$. Then for every
$v\in\mathbb{R}^d$
$$
	\mathbf{E}\|(Z-\Sigma)v\| \ge
	\frac{1}{n}\,\mathbf{E}\|X\tilde X^*v\|
	-
	\frac{\|\Sigma v\|}{\sqrt{n}}.
$$
\end{lemma}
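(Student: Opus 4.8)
The plan is to reverse the Jensen/symmetrization step behind Lemma \ref{lem:decouple} by conditioning on the scalar Gaussian variables $\xi_k := \langle X_k,v\rangle$.  We may assume $\Sigma v\neq 0$ (otherwise $\tilde X^* v=0$ almost surely and both sides vanish), and we write $\sigma_v^2 := \langle v,\Sigma v\rangle = \|\Sigma^{1/2}v\|^2>0$.  The starting point is that, conditionally on $\mathcal F:=\sigma(\xi_1,\ldots,\xi_n)$, the vector $n(Z-\Sigma)v = \sum_k X_k\xi_k - n\Sigma v$ is \emph{Gaussian}: by the standard conditioning formula for jointly Gaussian vectors one has $X_k\mid\xi_k\sim N(\sigma_v^{-2}\xi_k\,\Sigma v,\,\Sigma_Y)$ with $\Sigma_Y:=\Sigma-\sigma_v^{-2}(\Sigma v)(\Sigma v)^*$, and the $X_k$ remain independent given $\mathcal F$.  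Summing, and writing $R:=\sum_k\xi_k^2$,
$$
	n(Z-\Sigma)v \mid \mathcal F \ \sim\ N\Big(\tfrac{R-n\sigma_v^2}{\sigma_v^2}\,\Sigma v,\ R\,\Sigma_Y\Big).
$$

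The key observation is that the (random) conditional mean points in the \emph{fixed} direction $\Sigma v$, which is precisely the direction of the error term we are allowed to lose.  We discard it using the elementary fact that $\mathbf{E}\|a+G\|\ge\mathbf{E}\|G\|$ for any fixed vector $a$ and any symmetric random vector $G$ (average $\|a+G\|$ and $\|a-G\|=\|a+(-G)\|$ and apply the triangle inequality).  Applying this conditionally on $\mathcal F$, with $G\sim N(0,R\,\Sigma_Y)$, and then taking expectations yields
$$
	\mathbf{E}\|n(Z-\Sigma)v\| \ \ge\ \mathbf{E}[\sqrt R]\;\mathbf{E}\|G_Y\|,\qquad G_Y\sim N(0,\Sigma_Y),
$$
where we used that $N(0,R\Sigma_Y)\stackrel{d}{=}\sqrt R\,G_Y$ given $R$.

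It then remains to replace $\mathbf{E}\|G_Y\|$ by $\mathbf{E}\|G_\Sigma\|$ with $G_\Sigma\sim N(0,\Sigma)$, at the cost of the error term.  First, conditioning on $\tilde\xi_k:=\langle\tilde X_k,v\rangle$ shows that $X\tilde X^* v=\sum_k X_k\tilde\xi_k$ is conditionally $N(0,\tilde R\,\Sigma)$ with $\tilde R:=\sum_k\tilde\xi_k^2\stackrel{d}{=}R$, so $\mathbf{E}\|X\tilde X^* v\|=\mathbf{E}[\sqrt R]\,\mathbf{E}\|G_\Sigma\|$.  Next, since $\Sigma=\Sigma_Y+(\sigma_v^{-1}\Sigma v)(\sigma_v^{-1}\Sigma v)^*$ is a rank-one perturbation of $\Sigma_Y$, we may realize $G_\Sigma\stackrel{d}{=}G_Y+\sigma_v^{-1}(\Sigma v)\,\zeta$ with $\zeta\sim N(0,1)$ independent of $G_Y$, so that $\mathbf{E}\|G_\Sigma\|\le\mathbf{E}\|G_Y\|+\sigma_v^{-1}\|\Sigma v\|$.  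Combining these with $\mathbf{E}[\sqrt R]\le\sqrt{\mathbf{E} R}=\sqrt n\,\sigma_v$ gives
$$
	\mathbf{E}\|n(Z-\Sigma)v\| \ \ge\ \mathbf{E}[\sqrt R]\big(\mathbf{E}\|G_\Sigma\|-\sigma_v^{-1}\|\Sigma v\|\big) \ \ge\ \mathbf{E}\|X\tilde X^* v\|-\sqrt n\,\|\Sigma v\|,
$$
and dividing by $n$ is the claim.

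The genuinely delicate point is the very first step: recognizing that conditioning on the one-dimensional variables $\xi_k$ turns $n(Z-\Sigma)v$ into a Gaussian vector whose random mean lies in the harmless direction $\Sigma v$, which is exactly what makes the symmetrization bound $\mathbf{E}\|a+G\|\ge\mathbf{E}\|G\|$ applicable.  Everything afterwards is routine manipulation of rank-one Gaussian perturbations, together with Cauchy-Schwarz ($\mathbf{E}[\sqrt R]\le\sqrt{\mathbf{E} R}$) and $\mathbf{E}|\zeta|\le 1$.
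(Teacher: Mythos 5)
Your proof is correct and is essentially the same argument as the paper's, cast in a slightly different language: the conditional law $X_k\mid\xi_k\sim N(\sigma_v^{-2}\xi_k\,\Sigma v,\Sigma_Y)$ that you use is exactly the paper's decomposition $X=X'+\sigma_v^{-2}(\Sigma v)(v^*X)$ with $X'=(I-\sigma_v^{-2}\Sigma vv^*)X$ independent of $X^*v$, and both proofs then discard the randomness along the direction $\Sigma v$ and compare $\mathbf{E}\|X'_1\|$ (your $\mathbf{E}\|G_Y\|$) to $\mathbf{E}\|X_1\|$ (your $\mathbf{E}\|G_\Sigma\|$) via the same rank-one triangle-inequality bound. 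The only cosmetic difference is the mechanism for discarding the $\Sigma v$-component: you condition on $\mathcal F=\sigma(\xi_1,\ldots,\xi_n)$ and use the symmetrization bound $\mathbf{E}\|a+G\|\ge\mathbf{E}\|G\|$, whereas the paper conditions on $X'$ and uses Jensen's inequality, exploiting that the scalar coefficient of $\Sigma v$ has conditional mean zero; both are equally elementary and give the same intermediate inequality $\mathbf{E}\|n(Z-\Sigma)v\|\ge\mathbf{E}[\sqrt R]\,\mathbf{E}\|G_Y\|$.
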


\begin{proof}
The reader may readily verify that the random matrix
$$
	X' = \bigg(I-\frac{\Sigma vv^*}{\langle v,\Sigma v\rangle}\bigg) X
$$
is independent of the random vector $X^*v$ (and therefore of
$\langle v,Zv\rangle$). Moreover
$$
	(Z-\Sigma)v =
	\frac{XX^*v}{n} - \Sigma v =
	\frac{X'X^*v}{n} +
	\bigg(
	\frac{\langle v,Zv\rangle}{\langle v,\Sigma v\rangle} - 1\bigg)
	\Sigma v.
$$
As the columns of $X'$ are i.i.d.\ and independent of $X^*v$, the
pair $(X'X^*v,X^*v)$ has the same distribution as
$(X'_1\|X^*v\|,X^*v)$ where $X'_1$ denotes the first column of $X'$. Thus
$$
	\mathbf{E}\|(Z-\Sigma)v\| =
	\mathbf{E}\bigg\|
	\frac{X'_1\|X^*v\|}{n} +
	\bigg(
	\frac{\langle v,Zv\rangle}{\langle v,\Sigma v\rangle} - 1\bigg)
	\Sigma v
	\bigg\|
	\ge
	\frac{1}{n}\,
	\mathbf{E}\|X^*v\|\,\mathbf{E}\|X'_1\|,
$$
where we used Jensen's inequality conditionally on $X'$. 
Now note that
$$
	\mathbf{E}\|X_1'\| \ge
	\mathbf{E}\|X_1\| - \|\Sigma v\|\,
	\frac{\mathbf{E}|\langle v,X_1\rangle|}{\langle v,\Sigma v\rangle}
	\ge \mathbf{E}\|X_1\| -
        \frac{\|\Sigma v\|}{\langle v,\Sigma v\rangle^{1/2}}.
$$
We therefore have
$$
	\mathbf{E}\|(Z-\Sigma)v\| \ge
	\frac{1}{n}\,
	\mathbf{E}\|X_1\|\,\mathbf{E}\|\tilde X^*v\|
	-
	\frac{1}{n}\,
	\mathbf{E}\|X^*v\|
	\frac{\|\Sigma v\|}{\langle v,\Sigma v\rangle^{1/2}}
	\ge	
	\frac{1}{n}\,\mathbf{E}\|X\tilde X^*v\|
	-
	\frac{\|\Sigma v\|}{\sqrt{n}},
$$
as $\mathbf{E}\|X^*v\|\le \sqrt{n}\,\langle v,\Sigma v\rangle^{1/2}$ and
as $X_1\|\tilde X^*v\|$ has the same distribution as
$X\tilde X^*v$.
\qed\end{proof}

As a corollary, we can obtain the first term in the lower bound.

\begin{corollary}
\label{cor:firstterm}
In the setting of this section, we have
$$
	\mathbf{E}\|Z-\Sigma\| \gtrsim
	\|\Sigma\|\sqrt{\frac{r(\Sigma)}{n}}.
$$
\end{corollary}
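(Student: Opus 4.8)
The plan is to derive the bound from the recoupling inequality of Lemma~\ref{lem:recouple}, which will produce the factor $\sqrt{r(\Sigma)/n}$, and then to dispose of a lower-order error term using a direct estimate on a single diagonal entry of $Z-\Sigma$.

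First I would take $v$ to be a top unit eigenvector of $\Sigma$, so that $\Sigma v=\|\Sigma\|v$ and in particular $\|\Sigma v\|=\langle v,\Sigma v\rangle=\|\Sigma\|$. Since $\|(Z-\Sigma)v\|\le\|Z-\Sigma\|$, Lemma~\ref{lem:recouple} yields
$$
	\mathbf{E}\|Z-\Sigma\| \ge \frac{1}{n}\,\mathbf{E}\|X\tilde X^*v\| - \frac{\|\Sigma\|}{\sqrt n}.
$$
To evaluate $\mathbf{E}\|X\tilde X^*v\|$, note (as in the proof of Lemma~\ref{lem:recouple}) that $X\tilde X^*v$ has the same distribution as $X_1\|\tilde X^*v\|$, where $X_1\sim N(0,\Sigma)$ is independent of $\tilde X$; hence $\mathbf{E}\|X\tilde X^*v\|=\mathbf{E}\|\Sigma^{1/2}g\|\cdot\mathbf{E}\|\tilde X^*v\|$ with $g\sim N(0,I_d)$. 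The coordinates of $\tilde X^*v$ are i.i.d.\ $N(0,\|\Sigma\|)$, so $\mathbf{E}\|\tilde X^*v\|\gtrsim\sqrt{n\|\Sigma\|}$, while I claim $\mathbf{E}\|\Sigma^{1/2}g\|\gtrsim\sqrt{\mathrm{Tr}[\Sigma]}$. This last bound is not an immediate consequence of Jensen's inequality (which gives the opposite bound $\mathbf{E}\|\Sigma^{1/2}g\|\le\sqrt{\mathrm{Tr}[\Sigma]}$); instead I would combine the trivial estimate $\mathbf{E}\|\Sigma^{1/2}g\|\ge\mathbf{E}|\langle v,\Sigma^{1/2}g\rangle|\asymp\|\Sigma\|^{1/2}$ with the concentration bound $\mathrm{Var}\|\Sigma^{1/2}g\|\lesssim\|\Sigma\|$ from Lemma~\ref{lem:gaussconc} (the map $g\mapsto\|\Sigma^{1/2}g\|$ is $\|\Sigma\|^{1/2}$-Lipschitz), which gives $(\mathbf{E}\|\Sigma^{1/2}g\|)^2\ge\mathrm{Tr}[\Sigma]-O(\|\Sigma\|)$; distinguishing the cases $\mathrm{Tr}[\Sigma]\ge 2C\|\Sigma\|$ and $\mathrm{Tr}[\Sigma]<2C\|\Sigma\|$ then yields the claim. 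Combining these estimates gives $\frac1n\mathbf{E}\|X\tilde X^*v\|\gtrsim\|\Sigma\|\sqrt{r(\Sigma)/n}$, hence
$$
	\mathbf{E}\|Z-\Sigma\| \gtrsim \|\Sigma\|\sqrt{\tfrac{r(\Sigma)}{n}} - \frac{\|\Sigma\|}{\sqrt n}.
$$

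It remains to absorb the subtracted term $\|\Sigma\|/\sqrt n$, which is genuinely of the same order as the target when $r(\Sigma)$ is bounded. For this I would use the pointwise bound $\|Z-\Sigma\|\ge|\langle v,(Z-\Sigma)v\rangle|$ and observe that in the eigenbasis of $\Sigma$ this entry equals $\frac{\|\Sigma\|}{n}\sum_{k=1}^n(g_k^2-1)$, where $g_k=\langle v,X_k\rangle/\|\Sigma\|^{1/2}$ are i.i.d.\ standard Gaussians. Since the $L^1$-norm of a centered sum of i.i.d.\ variables with bounded fourth moment is comparable to its standard deviation (e.g.\ $\mathbf{E}|S_n|\ge(\mathbf{E}S_n^2)^{3/2}/(\mathbf{E}S_n^4)^{1/2}$ by H\"older, and $\mathbf{E}S_n^4\lesssim(\mathbf{E}S_n^2)^2$ for such sums), we get $\mathbf{E}|\sum_k(g_k^2-1)|\gtrsim\sqrt n$ and therefore $\mathbf{E}\|Z-\Sigma\|\gtrsim\|\Sigma\|/\sqrt n$. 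Writing $a=\mathbf{E}\|Z-\Sigma\|$, the two displayed bounds read $a\ge c_1\|\Sigma\|\sqrt{r(\Sigma)/n}-\|\Sigma\|/\sqrt n$ and $a\ge c_2\|\Sigma\|/\sqrt n$; using the second to replace $\|\Sigma\|/\sqrt n$ by $a/c_2$ in the first absorbs the negative term and gives $a\gtrsim\|\Sigma\|\sqrt{r(\Sigma)/n}$, completing the proof.

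The main obstacle is precisely this bookkeeping around the lower-order term: the recoupling inequality only delivers $\frac1n\mathbf{E}\|X\tilde X^*v\|$ up to an additive error $\|\Sigma\|/\sqrt n$, and since $r(\Sigma)$ can be as small as $1$ this error cannot be neglected and must be controlled separately by the diagonal estimate above. The only other slightly delicate point is the lower bound $\mathbf{E}\|\Sigma^{1/2}g\|\gtrsim\sqrt{\mathrm{Tr}[\Sigma]}$, where a naive appeal to Jensen's inequality points the wrong way and one must instead combine a first-moment bound with Gaussian concentration.
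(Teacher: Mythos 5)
Your proof is correct and takes essentially the same route as the paper: apply Lemma~\ref{lem:recouple}, use Gaussian concentration to bound $\mathbf{E}\|X_1\|$ and $\mathbf{E}\|\tilde X^* v\|$ from below by the square roots of their second moments, and then absorb the additive error $\|\Sigma\|/\sqrt{n}$ via the one-dimensional marginal $\langle v,(Z-\Sigma)v\rangle=\langle v,\Sigma v\rangle\,\tfrac1n\sum_k(g_k^2-1)$.

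Two small remarks on where you add genuine value. First, the paper evaluates the supremum $\sup_{v\in B}$ of the recoupling bound and then compresses the lower bound $\mathbf{E}\|X_1\|\gtrsim\sqrt{\mathrm{Tr}[\Sigma]}$ into the one-line remark ``using Gaussian concentration as in the proof of Lemma~\ref{lem:lowernck}''; you instead fix $v$ to be a top eigenvector (equivalent, since that is where the supremum is attained) and spell out the concentration step, correctly noting that Jensen points the wrong way and that one must combine $\mathrm{Var}\|\Sigma^{1/2}g\|\lesssim\|\Sigma\|$ with the trivial bound $\mathbf{E}\|\Sigma^{1/2}g\|\gtrsim\|\Sigma\|^{1/2}$ and split cases on whether $r(\Sigma)$ is bounded. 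This is exactly the implicit content of the paper's remark. Second, the paper invokes ``the central limit theorem'' to get $\|\Sigma\|/\sqrt n\lesssim\mathbf{E}\|Z-\Sigma\|$, which is a slight abuse since a nonasymptotic bound is needed; your replacement $\mathbf{E}|S_n|\ge(\mathbf{E}S_n^2)^{3/2}/(\mathbf{E}S_n^4)^{1/2}$ together with $\mathbf{E}S_n^4\lesssim(\mathbf{E}S_n^2)^2$ for $S_n=\sum_k(g_k^2-1)$ is a clean rigorous version of that step. In short: same argument, with two steps made fully explicit.
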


\begin{proof}
Taking the supremum over $v\in B$ in Lemma \ref{lem:recouple} yields
$$
	\mathbf{E}\|Z-\Sigma\| 
	+
	\frac{\|\Sigma\|}{\sqrt{n}}
	\ge
	\sup_{v\in B}\frac{1}{n}\,\mathbf{E}\|X\tilde X^*v\| =
	\frac{1}{n}\,\mathbf{E}\|X_1\|\,
	\sup_{v\in B}\mathbf{E}\|\tilde X^*v\|.
$$
Using Gaussian concentration as in the proof of Lemma \ref{lem:lowernck},
we obtain
$$
	\mathbf{E}\|X_1\| \gtrsim
	\mathbf{E}[\|X_1\|^2]^{1/2} = \sqrt{\mathrm{Tr}[\Sigma]},\qquad
	\mathbf{E}\|\tilde X^*v\| \gtrsim
	\mathbf{E}[\|\tilde X^*v\|^2]^{1/2} =
	\sqrt{n\,\langle v,\Sigma v\rangle}.
$$
This yields
$$
	\mathbf{E}\|Z-\Sigma\| 
	+
	\frac{\|\Sigma\|}{\sqrt{n}}
	\gtrsim
	\|\Sigma\|\sqrt{\frac{r(\Sigma)}{n}}.
$$
On the other hand, we can estimate
by the central limit theorem
$$
	\frac{\|\Sigma\|}{\sqrt{n}} \lesssim
	\sup_{v\in B}\mathbf{E}|\langle v,(Z-\Sigma)v\rangle| \le
	\mathbf{E}\|Z-\Sigma\|,
$$
as $\langle v,(Z-\Sigma)v\rangle =
\langle v,\Sigma v\rangle\, \frac{1}{n}\sum_{k=1}^n\{Y_k^2-1\}$ with
$Y_k=\langle v,X_k\rangle/\langle v,\Sigma v\rangle^{1/2}\sim
N(0,1)$.
\qed\end{proof}

We can now easily complete the proof of Theorem \ref{thm:kolt}.

\begin{proof}[Proof of Theorem \ref{thm:kolt}]
The upper bound follows immediately from Lemmas
\ref{lem:sqreduce} and \ref{lem:normscm}.
For the lower bound, suppose first that $r(\Sigma)\le 2n$. Then
$\sqrt{r(\Sigma)/n}\gtrsim r(\Sigma)/n$, and the result follows from
Corollary \ref{cor:firstterm}.  On the other hand, if
$r(\Sigma)>2n$,
$$
	\mathbf{E}\|Z-\Sigma\| \ge
	\mathbf{E}\|Z\| - \|\Sigma\|
	\ge
	\frac{\mathbf{E}\|X_1\|^2}{n} - \|\Sigma\|\,\frac{r(\Sigma)}{2n}  =
	\|\Sigma\|\,\frac{r(\Sigma)}{2n},
$$
where we used that $Z = \frac{1}{n}\sum_{k=1}^nX_kX_k^*\succeq 
\frac{1}{n}X_1X_1^*$.
\qed\end{proof}

\begin{acknowledgement} The author warmly thanks IMA for its hospitality 
during the annual program ``Discrete Structures: Analysis and 
Applications'' in Spring 2015. The author also thanks Markus Rei{\ss} for 
the invitation to lecture on this material in the 2016 spring school in 
L\"ubeck, Germany, which further motivated the exposition in this chapter. 
This work was supported in part by NSF grant CAREER-DMS-1148711 and by ARO 
PECASE award W911NF-14-1-0094. \end{acknowledgement}

%


\end{document}